\newtheorem{theorem}{Theorem}[section]
\newtheorem{lemma}[theorem]{Lemma}
\theoremstyle{definition}
\newtheorem{example}[theorem]{Example}
\newcommand{\B}{\mathcal{B}}
\def \deg {{\rm deg}}
\def \GDD {{\rm GDD}}
\def \leq {\leqslant}
\def \geq {\geqslant}
\def \mod{\pmod}
\let\oldproofname=\proofname
\renewcommand{\proofname}{\rm\bf{\oldproofname}}
\title{Zarankiewicz numbers near the triple system threshold}
\author{Guangzhou Chen\thanks{Henan Engineering Laboratory for Big Data Statistical Analysis and Optimal Control, School of Mathematics and Information Science, Henan
Normal University, Xinxiang, 453007, PR China}\qquad
Daniel Horsley\thanks{School of Mathematics, Monash University, VIC 3800, Australia \quad \texttt{danhorsley@gmail.com}}\qquad
Adam Mammoliti\thanks{School of Mathematics, Monash University, VIC 3800, Australia}}
\date{}
\begin{document}
\maketitle
\setstretch{1.2}

\begin{abstract}
For positive integers $m$ and $n$, the Zarankiewicz number $Z_{2,2}(m,n)$ can be defined as the maximum total degree of a linear hypergraph with $m$ vertices and $n$ edges. Guy determined $Z_{2,2}(m,n)$ for all $n \geq \binom{m}{2}/3+O(m)$. Here, we extend this by determining $Z_{2,2}(m,n)$ for all $n \geq \binom{m}{2}/3$ and, when $m$ is large, for all $n \geq \binom{m}{2}/6+O(m)$.
\end{abstract}

\begin{center}
{\small \textbf{Keywords:} Zarankiewicz problem; Zarankiewicz number; linear hypergraph; group divisible design}\vspace*{-3mm}
\end{center}

\section{Introduction}\label{S:intro}

For our purposes, a hypergraph $H$ consists of a set $V(H)$ of \emph{vertices} and a multiset $E(H)$ of \emph{edges}, where each edge is a nonempty subset of $V(H)$. The \emph{total degree} of $H$ is $\sum_{E \in E(H)}|E|$. We say $H$ is \emph{linear} if each pair of distinct vertices occurs together in at most one edge, and we say $H$ is a \emph{linear space} if each pair of distinct vertices occurs together in exactly one edge. A hypergraph or linear space is a \emph{$k$-hypergraph} or \emph{linear $k$-space} if each of its edges has size $k$.

For positive integers $s$, $t$, $m$ and $n$, the Zarankiewicz number $Z_{s,t}(m,n)$ is usually defined to be the maximum number of edges in a bipartite graph with parts of sizes $m$ and $n$ that has no complete bipartite subgraph containing $s$ vertices in the part of size $m$ and $t$ vertices in the part of size $n$. By considering this bipartite graph as the vertex-edge incidence graph of a hypergraph, this is equivalent to the maximum total degree of a hypergraph with $m$ vertices and $n$ edges in which each set of $s$ vertices is a subset of at most $t-1$ edges. In particular $Z_{2,2}(m,n)$ can be defined as the maximum total degree of a linear hypergraph with $m$ vertices and $n$ edges. The problem of finding these numbers was first posed by Zarankiewicz in \cite{Zar}.

It is well known that if $n=\binom{m}{2}/\binom{k}{2}$ for some integer $k \geq 3$ we have $Z_{2,2}(m,n) = \binom{m}{2}$ if and only if a linear $k$-space with $m$ vertices exists. A linear 3-space is often referred to as a Steiner triple system. We will refer to $n=\frac{1}{3}\binom{m}{2}$ as the \emph{triple system threshold}. \v{C}ul\'{\i}k \cite{Cul} observed that, for each $t \geq 2$, $Z_{2,t}(m,n)=(t-1)\binom{m}{2}+n$ when $n \geq (t-1)\binom{m}{2}$. In the case $t=2$, Guy \cite{Guy} extended this by completely determining $Z_{2,2}(m,n)$ for most values of $n$ above the triple system threshold. Our first main result completes this effort by determining $Z_{2,2}(m,n)$ for all $n$ above the triple system threshold.

\begin{theorem}\label{T:aboveSTS}
For positive integers $m$ and $n$ with $\frac{1}{3}\binom{m}{2} \leq n \leq \binom{m}{2}$, we have
\[Z_{2,2}(m,n) =
\left\{
  \begin{array}{ll}
    \left\lfloor\tfrac{1}{14}m(3m-4)+\tfrac{12}{7}n\right\rfloor & \hbox{if $m$ is even and $n \leq \left\lfloor\tfrac{1}{3}\binom{m}{2}+\frac{m}{3}\right\rfloor$,} \\[1.5mm]
    \left\lfloor\tfrac{1}{4}m(m-1)+\tfrac{3}{2}n\right\rfloor-1 & \hbox{if $m \equiv 5 \mod{6}$ and $n = \left\lceil\tfrac{1}{3}\binom{m}{2}\right\rceil$,} \\[1.5mm]
    \left\lfloor\tfrac{1}{4}m(m-1)+\tfrac{3}{2}n\right\rfloor & \hbox{otherwise.}
  \end{array}
\right.
\]
\end{theorem}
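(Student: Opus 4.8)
The plan is to reduce everything to controlling a single ``defect'' parameter. For a linear hypergraph $H$ with edges $E_1,\dots,E_n$ of sizes $k_i=|E_i|$, linearity gives $\sum_i\binom{k_i}{2}\leq\binom{m}{2}$; write $U=\binom{m}{2}-\sum_i\binom{k_i}{2}\geq0$ for the number of uncovered pairs. Using the elementary identity $\binom{k_i}{2}-\binom{k_i-2}{2}=2k_i-3$ and summing, I obtain the exact formula
\[2T=\binom{m}{2}+3n-\Delta,\qquad \Delta:=U+\sum_{i}\binom{k_i-2}{2},\]
where $T$ is the total degree. Since $\Delta\geq0$, this immediately yields $T\leq\tfrac14 m(m-1)+\tfrac32 n$, and as $T$ is an integer the floor bound of the ``otherwise'' case follows for all $m,n$. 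So the whole problem is to minimise $\Delta$ over linear hypergraphs with $m$ vertices and $n$ edges. Here $\Delta=0$ (or $\Delta=1$, when $\binom{m}{2}+3n$ is odd) is attained exactly when all blocks have size $2$ or $3$ and all pairs are covered, i.e.\ when $K_m$ decomposes into $a=\tfrac12(\binom{m}{2}-n)$ triangles with the remaining pairs taken as edges; this is possible precisely when $a\leq D(m)$, the maximum number of edge-disjoint triangles in $K_m$. This recovers the generic value and pins down when it fails.

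The case $m\equiv5\pmod 6$ is then a clean off-by-one. By Spencer's theorem $D(m)=\tfrac13(\binom{m}{2}-4)$, with a $4$-cycle as the leave of a maximum packing. At $n=\lceil\tfrac13\binom{m}{2}\rceil$ attaining $\Delta=0$ would require $q:=\tfrac13(\binom{m}{2}-1)$ triangles, one more than $D(m)$, and a short integrality check rules out $\Delta=1$ as well (both ``one uncovered pair'' and ``one size-$4$ block'' force a non-integer triangle count). Hence $\Delta\geq2$, giving the stated reduction by $1$; the matching construction uses a maximum packing ($q-1$ triangles, $C_4$ leave) together with two of its leave edges.

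The even case is the crux, and I claim the minimum defect on the exceptional range is $\Delta_{\min}=\tfrac37 R$, where $R:=\tfrac16 m(m+1)-n\geq0$; this is equivalent to the first case of the theorem. The first ingredient is the partial-triple-system bound $n_3\leq D(m)=\tfrac16 m(m-2)$, since the size-$3$ blocks are edge-disjoint triangles. The difficulty is that this bound together with the pair budget is \emph{not} enough: the corresponding linear program is minimised by configurations rich in blocks of size $4$, which would give a strictly smaller defect (about $\tfrac1{15}m$ at the threshold rather than the correct $\tfrac17 m$). The heart of the matter is to show such configurations are not realisable, because once a near-maximum triangle packing is present the uncovered pairs form essentially a perfect matching, which contains no $K_4$, so size-$4$ and size-$5$ blocks cannot coexist with enough triangles. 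Converting this into the sharp bound $\Delta\geq\tfrac37 R$ --- via a degree/parity analysis exploiting that every vertex of $K_m$ has odd degree when $m$ is even --- is the step I expect to be the main obstacle.

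For the construction I would cover all pairs using triangles, together with $\tfrac1{14}R$ blocks of size $6$ and some pairs: a size-$6$ block is exactly defect-neutral per covered pair, since $\binom{6-2}{2}=6$ matches the excess it incurs, which is why the slope $\tfrac{12}{7}$ and the modulus governing the floor appear. Because a genuine maximum triangle packing leaves no room for a $K_6$, these blocks cannot be carved out afterwards but must be produced coherently, e.g.\ from a group divisible design with block size $3$ and a controlled number of even groups whose intra-group pairs are carried by the size-$6$ blocks; the even groups absorb precisely the parity obstruction that a pure triangle packing cannot. Establishing the requisite group divisible designs for each residue of $m$, together with the rounding implicit in the floor and a finite list of small and boundary values, completes the lower bound. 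I expect the routine-but-lengthy part to be this design-existence case analysis, and the genuinely hard part to be the non-realisability argument underpinning the even upper bound.
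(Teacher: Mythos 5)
Your defect identity $2T=\tbinom{m}{2}+3n-\Delta$ with $\Delta=U+\sum_i\tbinom{k_i-2}{2}$ is a correct derivation of Roman's bound $U^+$, and your handling of the ``otherwise'' case and of the case $m\equiv 5\pmod 6$, $n=\lceil\tfrac13\tbinom{m}{2}\rceil$ (maximum partial triple systems via Spencer, plus the integrality/parity check forcing $\Delta\geq 2$) is sound and essentially the paper's own route (Lemmas~\ref{L:con from pack}, \ref{L:singleStrongUpperBound} and \ref{L:onlyExceptionAbove}). The even case, however, fails on both sides.

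The fatal error is the construction. Size-$6$ blocks cannot achieve $\lfloor U^0\rfloor$; in fact the parity obstruction you yourself identify kills your own configuration. Take your proposal: all pairs covered, $n_6=\tfrac{R}{14}$ hexads, the rest triangles and pairs. The pair and edge counts force $n_2=\tfrac{m}{2}-\tfrac{15R}{14}$, so the even-size blocks touch at most $2n_2+6n_6=m-\tfrac{12R}{7}$ vertices. Any vertex touched by none of them has all $m-1$ of its pairs covered by triangles, two at a time, which is impossible since $m-1$ is odd; so for $R>0$ the configuration does not exist. Repairing parity with hexads alone requires $n_6\geq\tfrac{R}{6}$ (each vertex needs an even block or a size-$2$ edge, and summing the vertex parity condition gives $2U+2n_2+6n_6\geq m$, hence $U+6n_6\geq R$), which forces $\Delta\geq R$ and total degree at most $U^+-\tfrac{R}{2}$ --- short of $\lfloor U^0\rfloor=U^+-\tfrac{3R}{14}+O(1)$ by roughly $\tfrac{2R}{7}$, i.e.\ about $\tfrac{2m}{21}$ at the triple system threshold. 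The correct accounting is per parity-absorbed vertex, not per covered pair: a hexad costs defect $6$ to fix $6$ vertices (ratio $1$), while a $K_4$ costs defect $1$ to fix $4$ vertices (ratio $\tfrac14$). The extremal objects therefore use \emph{vertex-disjoint} $K_4$'s --- about $\tfrac{3R}{7}$ of them, as the groups of a $3$-$\GDD$ of type $4^u2^v$, together with a matching on the remaining vertices and triangles on all cross pairs --- which is exactly what the paper builds (Theorem~\ref{T:3GDD4s2s} and Lemmas~\ref{L:evenMHypergraphs}, \ref{L:evenZValue}); the slope $\tfrac{12}{7}$ comes from the coefficient $7$ in the resulting inequality $3U+7n_4\geq 3R$, not from hexads. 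Relatedly, your heuristic that ``size-$4$ blocks cannot coexist with enough triangles'' misled you: they cannot be carved from the leave of a near-maximum packing, but the optimal configurations use a triangle packing that is $\Theta(m)$ \emph{below} maximum, leaving room for disjoint $K_4$'s.

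Separately, the upper bound $\Delta\geq\tfrac37R$ for even $m$ is simply not proved: you flag it as ``the main obstacle,'' and your sketched mechanism (structural non-realisability of $K_4$-rich configurations) is not the right one. Within this paper that bound is imported, not reproved --- Lemma~\ref{L:basicUpperBounds} cites \cite{CheHorMam} for $Z_{2,2}(m,n)\leq\lfloor U^0\rfloor$ --- but you neither prove nor invoke it, so it is a genuine gap in your argument. For what it is worth, the same parity count that breaks your construction closes this gap: summing $\mathrm{def}(v)+\#\{\text{even blocks containing }v\}\geq 1$ over all vertices gives $2U+\sum_{k\ \mathrm{even}}k\,n_k\geq m$, and adding this to the pair and edge counts yields $7\Delta\geq 3R$ term by term, since $7\tbinom{k-2}{2}\geq\tbinom{k}{2}-3+k$ for even $k$ and $7\tbinom{k-2}{2}\geq\tbinom{k}{2}-3$ for odd $k$.
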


The third case of the conclusion of Theorem~\ref{T:aboveSTS} duplicates Guy's result, but the first two cases constitute improvements on it.

Our other main result determines $Z_{2,2}(m,n)$ when $m$ is sufficiently large and $n$ is below the triple system threshold but significantly above the linear $4$-space threshold of $n=\frac{1}{6}\binom{m}{2}$. Note that in some cases we in fact prove Theorem~\ref{T:belowSTS} for all $m$ and in others for all $m \geq 96$ (see Lemmas~\ref{L:evenZValue} and \ref{L:ColRosStiConsequence}), but it general we require the stronger hypothesis that $m$ is sufficiently large.

\begin{theorem}\label{T:belowSTS}
There is a positive integer $m_0$ such that for integers $m$ and $n$ with $m>m_0$ and $\frac{1}{6}\binom{m}{2}+\frac{m}{3}+40 \leq n < \frac{1}{3}\binom{m}{2}$, we have
\[Z_{2,2}(m,n) =
\left\{
  \begin{array}{ll}
    \left\lfloor\tfrac{1}{14}m(3m-4)+\tfrac{12}{7}n\right\rfloor & \hbox{if $m$ is even and $n > \left\lfloor\tfrac{1}{3}\binom{m}{2}-\frac{m}{4}\right\rfloor$,} \\[1.5mm]
    \left\lfloor\tfrac{1}{6}m(m-1)+2n\right\rfloor-1 & \hbox{if $m$ is odd and $n = \left\lfloor\tfrac{1}{3}\binom{m}{2}\right\rfloor-r$ for some $r \in S$,} \\[1.5mm]
    \left\lfloor\tfrac{1}{6}m(m-1)+2n\right\rfloor & \hbox{otherwise,}
  \end{array}
\right.
\]
where $S=\{1,2,3,4\}$ if $m \equiv 1,3 \mod{6}$ and $S=\{0,1,3\}$ if $m \equiv 5 \mod{6}$.
\end{theorem}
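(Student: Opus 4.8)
The plan is to prove matching upper and lower bounds. Write the hypergraph with block sizes $k_1,\dots,k_n$; since it is linear, $\sum_i \binom{k_i}{2} \leq \binom{m}{2}$, and we wish to bound $\sum_i k_i$ from above. The basic upper bound comes from the convexity estimate $\binom{k}{2} \geq 3k-6$, which holds for all integers $k \geq 1$ with equality exactly when $k \in \{3,4\}$: summing gives $\binom{m}{2} \geq \sum_i \binom{k_i}{2} \geq 3\sum_i k_i - 6n$, hence $\sum_i k_i \leq \tfrac13\binom{m}{2} + 2n = \tfrac16 m(m-1)+2n$, and taking the floor yields the ``otherwise'' value. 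Equality forces every block to have size $3$ or $4$ and forces $\sum_i\binom{k_i}{2}=\binom{m}{2}$, i.e.\ a linear space whose blocks are triples and quadruples with exactly $a:=\tfrac13\binom{m}{2}-n$ quadruples. So the whole problem reduces to understanding when such a near-triple-system linear space exists and, when it does not, by how much the total degree must drop.

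For the refinements I would analyse this forced structure locally. Fix a vertex $v$ lying in $c_v$ quadruples and $t_v$ triples; covering all $m-1$ pairs at $v$ requires $3c_v + 2t_v = m-1$. When $m$ is odd this forces $c_v$ to be even, so every vertex lies in an even number of quadruples; a short argument using that any two blocks meet in at most one point then shows that a configuration of $a$ quadruples with all vertex-degrees even and pairwise intersections at most $1$ cannot exist for the smallest values of $a$, and I would check directly that the $K_5$-type configuration of five quadruples (ten points, each the unique intersection of a pair of blocks) realises $a=5$ to confirm the obstruction disappears once $a$ is large enough. Working this through in each residue class should reproduce exactly the set $S$ and the resulting drop by $1$. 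When $m$ is even, $3c_v+2t_v=m-1$ instead forces $c_v$ to be odd, hence $c_v\geq 1$, so at least $\lceil m/4\rceil$ quadruples are needed; since only $a=\tfrac13\binom{m}{2}-n$ are available, the plain bound becomes unachievable precisely when $n > \tfrac13\binom{m}{2}-\tfrac{m}{4}$. In that window I would combine the convexity estimate with the parity deficiency to obtain the $\tfrac{1}{14}m(3m-4)+\tfrac{12}{7}n$ bound; I expect this to be isolated in Lemma~\ref{L:evenZValue}.

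For the lower bound I would construct, for each admissible $n$, a linear hypergraph meeting the stated value. The natural family interpolates between a linear $4$-space (all quadruples, at $n\approx\tfrac16\binom{m}{2}$) and a Steiner-triple-system-like design (almost all triples, near the threshold), realised as pairwise balanced designs on $m$ points with block sizes in $\{3,4\}$ and exactly $a$ quadruples. Here I would invoke known existence results for such designs, controlling the exact number of quadruples in each residue class of $m$ modulo $6$ and using group divisible designs to splice together the required configurations, together with maximum-packing results (the likely source of Lemma~\ref{L:ColRosStiConsequence}, presumably a consequence of Colbourn--Rosa--Stinson) to handle the even-$m$ window and its leave. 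For the $-1$ cases I would build a linear space that is one quadruple short of the ideal and carries a single size-$2$ block, or leaves one pair uncovered, to absorb the parity defect, achieving total degree exactly one below the plain bound.

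The main obstacle will be the lower-bound construction: guaranteeing that \emph{every} integer value of $a$ in the required range is realisable as the number of quadruples in a $\{3,4\}$-design on $m$ points, with the block counts respecting the divisibility and parity constraints, uniformly across all residues of $m$ modulo $6$ and the full range of $n$. This is exactly where the hypothesis that $m$ is sufficiently large enters, since existence of the needed designs is available only asymptotically, via Wilson-type theorems and recursive group-divisible constructions. Matching the two bounds to the last unit in the boundary cases --- the precise set $S$ and the transition at $n=\tfrac13\binom{m}{2}-\tfrac{m}{4}$ --- will require the local parity analysis above to be carried out with no slack.
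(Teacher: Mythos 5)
Your overall skeleton does match the paper's: the Roman-type convexity bound $U^-$ with equality analysis, parity obstructions producing the exceptional cases, the quadruple-count threshold $n>\frac{1}{3}\binom{m}{2}-\frac{m}{4}$ for even $m$, and design-theoretic constructions for the lower bounds. However, there is a genuine gap in your equality analysis, and it would produce the \emph{wrong} exceptional set $S$ for $m\equiv 5\pmod{6}$. Your claim that attaining the bound ``forces every block to have size $3$ or $4$ and forces $\sum_i\binom{k_i}{2}=\binom{m}{2}$'' is valid only when $U^-=\frac{1}{3}\binom{m}{2}+2n$ is an integer, i.e.\ when $m\equiv 0,1\pmod{3}$. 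When $m\equiv 5\pmod{6}$ we have $\binom{m}{2}\equiv 1\pmod{3}$, so $\lfloor U^-\rfloor=U^--\frac{1}{3}$, and a hypergraph attaining the floor need not be a pure $\{3,4\}$-space: it may instead have exactly one defect edge, or one edge of size $2$, or one edge of size $5$, each of which costs exactly $\frac{1}{3}$ in the weighted count. The paper's Lemma~\ref{L:strongUpperBounds} must split into precisely these three subcases (its Case 2) and eliminate each via the even-graph lemma (Lemma~\ref{L:evenExistence}). This is not pedantry: for $m\equiv 5\pmod{6}$ and $r=2$ the value $\lfloor U^-\rfloor$ \emph{is} attained, but only by a hypergraph containing an edge of size $5$ (a $3$-GDD of type $5^1 1^{m-5}$; see the $r=2$ row of the table in Lemma~\ref{L:5mod6ZValue}). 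Your pure-$\{3,4\}$ parity analysis would classify $r=2$ as an exception (two edge-disjoint copies of $K_4$ cannot form an even graph), contradicting the theorem's $S=\{0,1,3\}$.

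Two further gaps. First, the upper bound $\lfloor U^0\rfloor$ for the even-$m$ window cannot be obtained by loosely ``combining the convexity estimate with the parity deficiency'': it must hold for arbitrary linear hypergraphs (with defects and edges of every size), and in the paper it is imported wholesale from prior work \cite{CheHorMam} via Lemma~\ref{L:basicUpperBounds}; it is certainly not supplied by Lemma~\ref{L:evenZValue}, which is the matching \emph{construction}. Second, on the construction side, $\{3,4\}$-designs in the style of Colbourn--Rosa--Stinson exist only for $m\not\equiv 2\pmod{3}$ and cannot deliver the exact edge counts needed elsewhere: to attain $\lfloor U^0\rfloor$ in the even window one needs linear $\{2,3,4\}$-spaces with prescribed numbers of $2$-edges and $4$-edges, which is exactly why the paper proves the new existence theorem for $3$-GDDs of type $4^u2^v$ (Theorem~\ref{T:3GDD4s2s}), and the values of $n$ at and near the boundaries are handled by gluing small $\{2,4\}$-configurations to triangle decompositions of their defects via Theorem~\ref{L:3HypergraphExistence} (Glock et al.\ and Delcourt--Postle) --- this, rather than PBD existence, is the real reason the hypothesis ``$m$ sufficiently large'' is needed.
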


Many of the values of $Z_{2,2}(m,n)$ given by Theorems~\ref{T:aboveSTS} and \ref{T:belowSTS} have already been determined in our previous work \cite{CheHorMam}. However, that work does not determine $Z_{2,2}(m,n)$ for values of $n$ close to $\frac{1}{3}\binom{m}{2}+\frac{m}{3}$ or $\frac{1}{3}\binom{m}{2}-\frac{m}{4}$ when $m$ is even or close to $\frac{1}{3}\binom{m}{2}$ when $m$ is odd. In fact, results in \cite{CheHorMam} indicate that the behaviour of $Z_{2,2}(m,n)$ can be particularly interesting at points like these. Furthermore, in \cite{CheHorMam}, values of $Z_{2,2}(m,n)$ are only determined for large values of $m$, while Theorem~\ref{T:aboveSTS} applies for all values of $m$.

A large amount of work has been devoted to determining bounds on, and the asymptotic behaviour of, $Z_{s,t}(m,n)$ in various regimes of $s,t,m,n$ (see \cite{KovSosTur} for the classical upper bound and \cite{AloRonSza,AloMelMubVer,Con} for a sampling of more recent work). %HHHH more?
Fewer results have been concerned with determining $Z_{s,t}(m,n)$ exactly. We have already mentioned the results of \v{C}ul\'{\i}k \cite{Cul} and Guy \cite{Guy}. Roman \cite{Rom} established a family of upper bounds on $Z_{s,t}(m,n)$. In the case $s=t=2$, two of these bounds are central to our work here and we have already seen them feature in Theorems~\ref{T:aboveSTS} and ~\ref{T:belowSTS} (see Section~\ref{S:prelim} for more details). Goddard, Henning and Oellermann \cite{GodHenOel}, Collins, Riasanovsky, Wallace and Radziszowski \cite{ColRiaWalRad}, and Tan \cite{Tan} determined exact values of $Z_{s,t}(m,n)$ for various small parameters. Dam\'{a}sdi, H\'{e}ger and Sz\H{o}nyi \cite{DamHegSzo} found a number of interesting exact values and bounds on $Z_{2,2}(m,n)$ relating to finite projective planes and other designs. Finally, our own previous work \cite{CheHorMam} gives exact values for $Z_{2,t}(m,n)$ in many cases where $m$ is large and $n=\Theta(tm^2)$.

In the course of proving Theorems~\ref{T:aboveSTS} and \ref{T:belowSTS}, we give a characterisation of the existence of $3$-GDDs of type $2^u4^v$ which may be of independent interest. A \emph{$3$-$\GDD$ of type $2^{u}4^{v}$} is a linear space with $2u+4v$ vertices whose edge multiset admits a partition $\{\B,\mathcal{G}\}$ such that each edge in $\B$ has size $3$, $\mathcal{G}$ contains exactly $u$ edges of size $2$ and $v$ edges of size $4$, and the edges in $\mathcal{G}$ form a partition of the vertex set.

\begin{theorem}\label{T:3GDD4s2s}
Let $u$ and $v$ be nonnegative integers. There exists a $3$-$\GDD$ of type $4^u2^v$ if and only if
\begin{itemize}[itemsep=0mm,parsep=0mm,topsep=1mm]
    \item[\textup{(i)}]
$u \equiv 0,1 \mod{3}$ and $v \equiv 0,1 \mod{3}$; and
    \item[\textup{(ii)}]
$u \equiv 0 \mod{3}$ or $v \equiv 0 \mod{3}$.
\end{itemize}
\end{theorem}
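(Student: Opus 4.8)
The plan is to first recast the statement in graph-decomposition language, which is also what makes it of independent interest: a $3$-$\GDD$ of type $4^u2^v$ is precisely a decomposition into triangles of the complete multipartite graph with $u$ parts of size $4$ and $v$ parts of size $2$, the groups being the parts and each size-$3$ edge a triangle meeting three distinct parts. The necessary conditions for such a decomposition are that every vertex has even degree and that the number of edges is divisible by $3$. Since $4u+2v$ is even, each vertex degree $4u+2v-g$ with $g\in\{2,4\}$ is automatically even, so the degree condition is vacuous. A short computation modulo $3$ shows that the edge count $\binom{4u+2v}{2}-6u-v$ is divisible by $3$ exactly when $(u,v)\equiv(0,0),(0,1)$ or $(1,0)\pmod 3$, and one checks directly that these three residue classes are precisely those described by (i) and (ii). This settles necessity; the degenerate cases $u+v\leq 1$ together with the single-group types hold trivially, so it remains to prove sufficiency for all admissible $(u,v)$.

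For sufficiency I would first dispose of the pure types: Hanani's determination of the spectrum of uniform $3$-$\GDD$s gives type $2^v$ whenever $v\equiv 0,1\pmod 3$ and $v\geq 3$, and type $4^u$ whenever $u\equiv 0,1\pmod 3$ and $u\geq 3$, covering $u=0$ and $v=0$. The genuinely new work is the mixed types, and here one meets the central obstruction. The usual device of inflating a triangle decomposition by assigning weights in $\{2,4\}$ and applying Wilson's Fundamental Construction cannot by itself produce a mixed type, because a triangle carrying two distinct weights would require an ingredient $3$-$\GDD$ of type $2^24^1$ or $2^14^2$, and neither of these exists (they fail the divisibility condition above). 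Thus every triangle of a size-$3$ master must be weighted monochromatically, which on a connected master forces a pure output.

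To circumvent this I would run Wilson's Fundamental Construction over a master \emph{linear space whose lines have size at least four}, so that its lines can absorb mixed weight-multisets. Taking an $S(2,4,N)$ on $N=u+v$ points and weighting the points by $2$ and $4$, each line receives a multiset from $\{2^4,2^34^1,2^14^3,4^4\}$ provided the weighting is chosen so that no line meets the weight-$4$ set in exactly two points; the required ingredients are the uniform $2^4,4^4$ together with the two small mixed designs of type $2^34^1$ (on $10$ points) and $2^14^3$ (on $14$ points), which I would construct explicitly. More generally I would use $(N,\{3,4\})$-PBDs, permitting triple lines (weighted monochromatically) alongside quadruple lines, and truncated transversal designs to realise the needed masters; combined with a filling-in construction this lets one induct within each admissible residue class modulo $3$, seeded by a short list of explicit base $\GDD$s.

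The main obstacle is not any single construction but the bookkeeping required to reach \emph{every} admissible $(u,v)$. Because mixing is possible only through lines of size at least $4$, one is pushed onto master designs whose own existence is constrained—$S(2,4,N)$ requires $N\equiv 1,4\pmod{12}$—so several master families, together with truncation, weighting and hole-filling, must be coordinated to cover all residues of $u$ and $v$ and all admissible $N=u+v$. Equally delicate is controlling the $2/4$-weighting so that the forbidden $2^24^2$ pattern never occurs on a line while the prescribed numbers of weight-$4$ and weight-$2$ points are attained. Finally, the small orders lie below the range where the recursion is effective and must be handled directly, so I expect the crux of the argument to be the explicit construction of a finite list of base and sporadic $\GDD$s, together with a careful verification that the recursive constructions leave no admissible pair $(u,v)$ uncovered.
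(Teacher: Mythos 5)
Your necessity argument is correct and is essentially the paper's (count the pairs covered by triples modulo $3$; the even-degree condition is vacuous), and your identification of the key obstruction is also correct: $3$-GDDs of types $2^24^1$ and $2^14^2$ do not exist, so no triple of a master design may receive mixed weights and no quadruple may receive the split $2^24^2$. The gap is everything after that. Your sufficiency plan requires, for every admissible pair $(u,v)$, a master $\{3,4\}$-PBD on $u+v$ points together with a $2/4$-weighting in which every triple is monochromatic and no quadruple meets the weight-$4$ set in exactly two points. You never construct these weighted masters, and constructing them is not ``bookkeeping'': the weighting condition forces the weight-$4$ points to induce a $\{3,4\}$-linear space, the weight-$2$ points likewise, and every cross pair to lie in a quadruple meeting one side in exactly one point, so these bipartitioned PBDs are objects at least as constrained as the GDDs you are trying to build. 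Indeed the plan fails outright at admissible pairs: for $(u,v)=(3,3)$ there is no $\{3,4\}$-PBD on $6$ points at all (in a decomposition of $K_6$ into triangles and $K_4$'s the odd degrees force every vertex into exactly one $K_4$, which is impossible since $4\nmid 6$), yet a $3$-GDD of type $4^32^3$ exists. Moreover, in your flagship $S(2,4,N)$ masters, sets meeting every line in $0$, $1$, $3$ or $4$ points are highly structured (in $\mathrm{PG}(d,3)$, for example, subspaces and complements of subspaces), so they realise only very special values of $u$; nothing in your outline produces even one infinite family of genuinely mixed weighted masters.

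What closes the gap --- and what the paper actually does --- is to avoid point-by-point weightings entirely. The paper uses the Colbourn--Hoffman--Rees theorem (Theorem~\ref{3-GDD}) on $3$-GDDs of type $g^1h^w$ to collect all vertices destined for groups of one size into a single large group, and then fills that group with a uniform $3$-GDD supplied by Theorem~\ref{3-GDD-uni}: for $u\equiv 0\pmod{3}$ and $v\leq 2u-2$, take a $3$-GDD of type $(2v)^14^u$ and fill the group of size $2v$ with a $3$-GDD of type $2^v$; the remaining ranges are handled symmetrically with types $(4u)^12^v$, $12^{2u/3}$ and $\ell^1 12^w$ for $\ell\in\{4,10\}$, plus a handful of small designs ($4^32^6$, $4^42^3$, etc.) from Colbourn's tables. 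Note that this ``one big group, then fill'' device is exactly what your passing remark about truncated transversal designs would reduce to if pursued, since weighting each group of a truncated TD by a constant avoids mixed blocks automatically. As written, though, the heart of your sufficiency proof is missing.
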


\section{Preliminaries and upper bounds}\label{S:prelim}

Let $H$ be a hypergraph. If the size of each edge in $H$ is in some set of positive integers $K$, then we say that $H$ is a \emph{$K$-hypergraph} and, if $H$ is a linear space, that $H$ is a \emph{linear $K$-space}. We abbreviate this to \emph{$k$-hypergraph} or \emph{linear $k$-space} when $K=\{k\}$ for some positive integer $k$. For a positive integer $k$, we denote by $H_k$ the $k$-hypergraph with vertex set $V(H)$ and edge multiset $\{E \in E(H):|E|=k\}$. For a vertex $x$ of $H$, $\deg_H(x)$ denotes the number of edges of $H$ that contain $x$. The \emph{total degree} of $H$ is $\sum_{x \in V(H)}\deg_H(x)=\sum_{E\in E(H)}|E|$. A \emph{graph} is a 2-hypergraph whose edge multiset is in fact a set. The \emph{underlying graph} $G$ of $H$ is the graph $G$ with vertex set $V(H)$ in which two vertices are adjacent if and only if they appear together in an edge of $H$. Similarly, the \emph{defect} $D$ of $H$ is the graph $D$ with vertex set $V(H)$ in which two vertices are adjacent if and only if they do not appear together in an edge of $H$. Note that a linear space on $m$ vertices has a defect which is an empty graph on $m$ vertices and has an underlying graph which is a complete graph on $m$ vertices. The complete graph on $m$ vertices is denoted by $K_{m}$. We say a graph is an \emph{even graph} if each of its vertices has even degree and say it is an \emph{odd graph} if each of its vertices has odd degree.

\begin{example}\label{X:linSpace}
Consider the hypergraph $H$ with vertex set $\{1,\ldots,8\}$ and edge set
\[\bigl\{\{1,2,3,4\},\{2,5,6\},\{2,7,8\},\{3,5,7\},\{3,6,8\},\{4,5,8\},\{4,6,7\},\{1,5\},\{1,6\},\{1,7\},\{1,8\}\bigr\}.\]
It can be checked that each pair of vertices occurs in exactly one edge, so $H$ is in fact a linear $\{2,3,4\}$-space. Now $H$ has $8$ vertices, $11$ edges and total degree $30$ and hence its existence shows that $Z_{2,2}(8,11) \geq 30$.
\end{example}

Theorems~\ref{T:aboveSTS} and \ref{T:belowSTS} essentially claim that values of $Z_{2,2}(m,n)$ usually meet the floor of one of three upper bounds, and otherwise are 1 less than this. We will use the following shorthands for these three upper bounds throughout the paper.
\begin{align*}
% \nonumber to remove numbering (before each equation)
  U^+ &= \tfrac{1}{4}m(m-1)+\tfrac{3}{2}n, \\
  U^{0\,} &= \tfrac{1}{14}m(3m-4)+\tfrac{12}{7}n, \\
  U^- &= \tfrac{1}{6}m(m-1)+2n.
\end{align*}
The bounds are always understood to be functions of $m$ and $n$ and the values of $m$ and $n$ will be clear from context. The bounds $U^+$ and $U^-$ are special cases of an upper bound due to Roman \cite{Rom} and $U^0$ is a special case of an upper bound established by the authors in \cite{CheHorMam}.

\begin{lemma}\label{L:basicUpperBounds}
For positive integers $m$ and $n$ we have
\[Z_{2,2}(m,n) \leq \min\left\{\left\lfloor U^+\right\rfloor, \left\lfloor U^- \right\rfloor, \left\lfloor U^0 \right\rfloor \right\}.\]
\end{lemma}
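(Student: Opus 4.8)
The plan is to reduce everything to the edge-size distribution of a linear hypergraph and then treat each of the three bounds in turn. It suffices to show that an arbitrary linear hypergraph $H$ with $m$ vertices and $n$ edges has total degree $T$ at most $\min\{\lfloor U^+\rfloor,\lfloor U^-\rfloor,\lfloor U^0\rfloor\}$. For each integer $k\geq 1$ write $n_k$ for the number of edges of $H$ of size $k$, so that $\sum_k n_k=n$ and $T=\sum_k k\,n_k$; since $H$ is linear and an edge of size $k$ uses $\binom{k}{2}$ of the $\binom{m}{2}$ pairs with no pair repeated, we also have $\sum_k \binom{k}{2}n_k\leq\binom{m}{2}$. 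The bounds $\lfloor U^+\rfloor$ and $\lfloor U^-\rfloor$ now follow from a one-line convexity (per-edge charging) argument: for every integer $k\geq 1$ the inequalities $(k-2)(k-3)\geq 0$ and $(k-3)(k-4)\geq 0$ rearrange to $k\leq\tfrac12\binom{k}{2}+\tfrac32$ and $k\leq\tfrac13\binom{k}{2}+2$. Multiplying by $n_k$, summing over $k$, and using the two relations above gives $T\leq\tfrac12\binom{m}{2}+\tfrac32 n=U^+$ and $T\leq\tfrac13\binom{m}{2}+2n=U^-$; as $T$ is an integer we may take floors. This is just the relevant special case of Roman's bound \cite{Rom}.

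The bound $\lfloor U^0\rfloor$ is the substantial one and is the special case of the bound from \cite{CheHorMam}. Here the analogous charging $k\leq\tfrac37\binom{k}{2}+\tfrac{12}{7}$ (valid for all $k\geq 1$, with equality only at $k=3$) yields only the weaker estimate $T\leq\tfrac37\binom{m}{2}+\tfrac{12}{7}n$, while a short computation shows $U^0=\tfrac37\binom{m}{2}+\tfrac{12}{7}n-\tfrac{m}{14}$. Thus the entire content of this bound is the saving of $\tfrac{m}{14}$, which cannot be extracted from pair-counting alone. Writing $D$ for the defect graph and $e(D)=\binom{m}{2}-\sum_k\binom{k}{2}n_k$ for the number of uncovered pairs, the exact identity $k=\tfrac37\binom{k}{2}+\tfrac{12}{7}-\tfrac1{14}(k-3)(3k-8)$ shows that $T\leq U^0$ is equivalent to the purely combinatorial inequality
\[6\,e(D)+\sum_{k\geq 1}(k-3)(3k-8)\,n_k\geq m.\]

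I would establish this inequality by spreading its left-hand side over the vertices. Assign to each vertex $v$ the charge $c(v)=3\deg_D(v)+\sum_{E\ni v}\tfrac{(|E|-3)(3|E|-8)}{|E|}$; since $\sum_v\deg_D(v)=2e(D)$ and each edge distributes its penalty equally among its vertices, the left-hand side above is exactly $\sum_v c(v)$, so it is enough to prove $c(v)\geq 1$ for every $v$. Fix $v$. Because the underlying graph and the defect $D$ partition the $m-1$ pairs at $v$, we have $\sum_{E\ni v}(|E|-1)+\deg_D(v)=m-1$, and each size-$3$ edge through $v$ contributes the even amount $2$; hence $\deg_D(v)+\sum_{E\ni v,\,|E|\neq 3}(|E|-1)$ has the same parity as $m-1$. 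This is where $m$ even enters: it forces this quantity to be odd, hence positive, so either $\deg_D(v)\geq 1$ (giving $c(v)\geq 3$) or $v$ lies in some edge of size $k\neq 3$, in which case $c(v)\geq\tfrac{(k-3)(3k-8)}{k}\geq 1$. Summing over the $m$ vertices gives the inequality. I expect this parity step to be the main obstacle: it is the reason the saving is genuine for even $m$ (for odd $m$ a Steiner triple system already shows that no such saving is possible), and arranging the fractional per-edge contributions so that every vertex is charged at least $1$ is the bookkeeping that \cite{CheHorMam} carries out.
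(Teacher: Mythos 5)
Your proposal takes a genuinely different route from the paper: the paper's entire proof is a citation, deferring the $\lfloor U^+\rfloor$ and $\lfloor U^-\rfloor$ bounds to \cite[Theorem 1]{Rom} and the $\lfloor U^0\rfloor$ bound to \cite[Theorem 1.1]{CheHorMam}, whereas you reprove everything from scratch. Your charging arguments check out: the inequalities $k\leq\tfrac{1}{2}\binom{k}{2}+\tfrac{3}{2}$ and $k\leq\tfrac{1}{3}\binom{k}{2}+2$ do give $U^+$ and $U^-$ for all $m$ and $n$ (this is essentially Roman's argument specialised to $s=t=2$), and for $U^0$ your identity $k=\tfrac{3}{7}\binom{k}{2}+\tfrac{12}{7}-\tfrac{1}{14}(k-3)(3k-8)$, the reduction to $6e(D)+\sum_{k}(k-3)(3k-8)n_k\geq m$, and the vertex discharging are all correct, using the facts that $(k-3)(3k-8)\geq 0$ for every integer $k$ (equality only at $k=3$) and $(k-3)(3k-8)/k\geq 1$ for every integer $k\geq 1$ with $k\neq 3$. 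What your approach buys is transparency: it shows exactly where the saving of $\tfrac{m}{14}$ in $U^0$ comes from (a per-vertex parity obstruction), which is the real content behind the citation to \cite{CheHorMam}.

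However, you should draw the conclusion of your own parity analysis explicitly: your proof of the $U^0$ part applies only to even $m$, and no proof can do better, because Lemma~\ref{L:basicUpperBounds} as literally stated is false for odd $m$. A Steiner triple system has empty defect and all edges of size $3$, so for $m\equiv 1,3\pmod 6$ and $n=\tfrac{1}{3}\binom{m}{2}$ one has $Z_{2,2}(m,n)=\binom{m}{2}$, while $U^0=\binom{m}{2}-\tfrac{m}{14}$; concretely, the Fano plane gives $Z_{2,2}(7,7)=21>20=\lfloor U^0\rfloor$. (This also clashes with the paper's own Theorem~\ref{T:aboveSTS}, which assigns $Z_{2,2}(7,7)=\lfloor U^+\rfloor=21$.) So the $\lfloor U^0\rfloor$ part of the lemma requires the hypothesis that $m$ is even; the paper's proof-by-citation glosses over this restriction, harmlessly, since that part of the lemma is only ever invoked when $m$ is even (Lemmas~\ref{L:evenZValue} and \ref{L:U0Exceptional}) and the theorems attach the value $\lfloor U^0\rfloor$ only to even $m$. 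With the even-$m$ hypothesis stated up front on the $U^0$ term rather than discovered mid-proof, your write-up is a complete and correct proof of the corrected statement.
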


\begin{proof}
We have $Z_{2,2}(m,n) \leq \min\{\lfloor U^+ \rfloor, \lfloor U^- \rfloor\}$ by \cite[Theorem 1]{Rom} and $Z_{2,2}(m,n) \leq \lfloor U^0 \rfloor$ by \cite[Theorem 1.1]{CheHorMam}.
\end{proof}

We will make use of the following two simple lemmas. The proof of the first is obvious.

\begin{lemma}\label{L:parity}
Let $H$ be a linear $3$-hypergraph of order $m$. The underlying graph $G$ of $H$ is an even graph and has $|E(G)| \equiv 0 \mod{3}$. Consequently, the defect $D$ of $H$ is an even graph if $m$ is odd, is an odd graph if $m$ is even, and has $|E(D)| \equiv \binom{m}{2} \mod{3}$.
\end{lemma}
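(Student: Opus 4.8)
The plan is to establish the claims about the underlying graph $G$ directly from a local count at each vertex together with a global count of pairs, and then to deduce everything about the defect $D$ by viewing it as the complement of $G$ in $K_m$.

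First I would handle $G$. Fix a vertex $x$. Each edge of $H$ containing $x$ is a triple $\{x,y,z\}$, and in $G$ it makes exactly the two vertices $y$ and $z$ neighbours of $x$. Because $H$ is linear, two distinct edges through $x$ cannot share any vertex other than $x$, so the pairs of new neighbours contributed by distinct edges through $x$ are disjoint. Hence $\deg_G(x)=2\deg_H(x)$, which is even, and so $G$ is an even graph. For the edge count, note that each triple of $H$ covers exactly $\binom{3}{2}=3$ pairs, and linearity guarantees that no pair is covered by two different triples; thus the triangles arising from the edges of $H$ are edge-disjoint in $G$, giving $|E(G)|=3|E(H)|\equiv 0\mod{3}$.

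Next I would pass to the defect. By definition $x$ and $y$ are adjacent in $D$ precisely when they are not adjacent in $G$, so $D$ is the complement of $G$ in $K_m$ and $\deg_D(x)=(m-1)-\deg_G(x)$ for every vertex $x$. Since $\deg_G(x)$ is even, the parity of $\deg_D(x)$ equals the parity of $m-1$: if $m$ is odd then every $\deg_D(x)$ is even, so $D$ is an even graph, while if $m$ is even then every $\deg_D(x)$ is odd, so $D$ is an odd graph. Finally $|E(D)|=\binom{m}{2}-|E(G)|=\binom{m}{2}-3|E(H)|\equiv\binom{m}{2}\mod{3}$.

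There is no serious obstacle here; as the surrounding text remarks, the statement is essentially immediate. The only point requiring care is to invoke linearity twice, for two slightly different purposes: once to see that the neighbours a vertex gains from distinct incident triples are disjoint (so that $\deg_G(x)$ equals $2\deg_H(x)$ exactly, rather than merely being at most this), and once to see that distinct triples contribute distinct pairs to $G$ (so that $|E(G)|$ equals $3|E(H)|$ exactly). Both the degree and edge-count conclusions for $G$ then transfer to $D$ purely by complementation, with no further hypotheses needed.
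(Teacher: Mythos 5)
Your proof is correct, and it is precisely the argument the paper has in mind: the paper omits the proof entirely, remarking only that it is obvious, and your write-up (degree doubling via linearity, edge-disjoint triangles giving $|E(G)|=3|E(H)|$, then complementation in $K_m$ for the defect) is the standard argument being alluded to.
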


\begin{lemma}\label{L:evenExistence}
Let $G$ be an even graph for which $E(G)=E(G_1) \cup \cdots \cup E(G_t)$ where $G_1,\ldots,G_t$ are pairwise edge-disjoint complete graphs such that
\begin{itemize}[itemsep=0mm,parsep=0mm]
    \item
$|V(G_i)|$ is even for each $i \in \{1,\ldots,t\}$; and
    \item
$|V(G_1)| \leq \cdots \leq |V(G_t)|$.
\end{itemize}
Then $t \geq |V(G_t)|+1$. Furthermore, if $t = |V(G_t)|+1$, then each vertex in $G_t$ is in precisely one of $G_1,\ldots,G_{t-1}$.
\end{lemma}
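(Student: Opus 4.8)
The plan is to exploit the parity condition coming from $G$ being an even graph together with the fact that each clique has an even number of vertices. Write $n = |V(G_t)|$ for the order of the largest clique. The crucial first step is to observe that \emph{every vertex lies in an even number of the cliques $G_1,\ldots,G_t$}. Indeed, since the cliques are pairwise edge-disjoint, the edges of $G$ incident with a vertex $v$ are partitioned according to which clique they lie in, so that $\deg_G(v) = \sum_{i:\, v \in V(G_i)} (|V(G_i)|-1)$. Each summand $|V(G_i)|-1$ is odd because $|V(G_i)|$ is even, while $\deg_G(v)$ is even because $G$ is an even graph; hence the number of indices $i$ with $v \in V(G_i)$ must be even.

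Next I would apply this to the vertices of $G_t$. Each such vertex lies in $G_t$ and in an even, hence at least two, number of cliques, so it lies in at least one clique $G_i$ with $i < t$. On the other hand, edge-disjointness forces each clique $G_i$ with $i \neq t$ to meet $V(G_t)$ in at most one vertex: if $G_i$ contained two vertices $u,v \in V(G_t)$, then the edge $uv$ would lie in both $G_i$ and $G_t$, a contradiction. A count of the incidences between the $n$ vertices of $G_t$ and the cliques $G_1,\ldots,G_{t-1}$ then yields the main inequality, since there are at least $n$ such incidences (each vertex of $G_t$ contributes at least one) and at most $t-1$ of them (each of $G_1,\ldots,G_{t-1}$ contributes at most one). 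Thus $t-1 \geq n$, that is, $t \geq |V(G_t)|+1$.

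For the ``furthermore'' clause I would reuse this incidence count in the equality case $t = n+1$. Then there are exactly $n$ cliques $G_1,\ldots,G_{t-1}$, each meeting $V(G_t)$ in at most one vertex, while each of the $n$ vertices of $G_t$ lies in at least one of them. Since the lower and upper bounds on the number of incidences now coincide at $n$, every vertex of $G_t$ must lie in exactly one of $G_1,\ldots,G_{t-1}$, as claimed.

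The only genuinely delicate point is the parity observation of the first paragraph; once it is in hand, the remainder is a routine double count, and the edge-disjointness hypothesis is exactly what converts ``each vertex of $G_t$ needs a further clique'' into ``those further cliques are distinct.'' I expect no difficulty beyond verifying these parities carefully.
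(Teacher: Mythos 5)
Your proof is correct and takes essentially the same approach as the paper: parity forces each vertex of $G_t$ into at least one of $G_1,\ldots,G_{t-1}$, edge-disjointness ensures each of those cliques captures at most one vertex of $G_t$, and the resulting incidence count gives $t-1 \geq |V(G_t)|$ with the equality case read off exactly as you do. The only cosmetic difference is that you isolate the slightly stronger observation that every vertex lies in an even number of cliques, whereas the paper states only the consequence it needs, namely that each vertex of $G_t$ lies in some further clique.
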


\begin{proof}
Because $G$ is an even graph and $|V(G_i)|$ is even for each $i \in \{1,\ldots,t\}$, each vertex in $V(G_t)$ must also be in $V(G_i)$ for some $i \in \{1,\ldots,t-1\}$. Because $G_1,\ldots,G_t$ are edge-disjoint, $V(G_i)$ contains at most one vertex in $V(G_t)$ for each $i \in \{1,\ldots,t-1\}$. Thus $t-1 \geq |V(G_t)|$ with equality only if each vertex in $G_t$ is in precisely one of $G_1,\ldots,G_{t-1}$.
\end{proof}

%\begin{definition}
%For positive integers $k$, $\l$, $m$ and $n$ with $k \geq 2$, define  $\B_\l^k(m,n)$ to be the linear program on nonnegative real variables $n_1,\ldots,n_m$ that aims to maximise $\sum_{i=1}^min_i$ subject to the following constraints.
%\begin{align}
%\medop\sum_{i=1}^m n_i &= n \label{E:con1}\\
%\medop\sum_{i=1}^m\tbinom{i}{2}n_i &\leq \l\tbinom{m}{2} \label{E:con2}\\
%\tfrac{1}{k-1-\alpha}\medop\sum_{i=1}^{k-1} i(i-1-\alpha)n_i + \medop\sum_{i=k}^{m}in_i & \leq \tfrac{1}{k-1}m(\l(m-1)-\alpha) \label{E:con3}
%\end{align}
%Further, define $\A_\l(m,n)$ to be the linear program on nonnegative real variables $n_1,\ldots,n_m$ that aims to maximise $\sum_{i=1}^min_i$ subject to \eqref{E:con1} and \eqref{E:con2}.
%\end{definition}

The next lemma establishes that $Z_{2,2}(m,n)$ cannot achieve $\lfloor U^- \rfloor$ in the cases where Theorem~\ref{T:belowSTS} specifies this.

\begin{lemma}\label{L:strongUpperBounds}
Let $m$ and $n$ be positive integers with $m$ odd and $n = \left\lfloor\tfrac{1}{3}\binom{m}{2}\right\rfloor-x$ for some $x \in S$, where $S=\{1,2,3,4\}$ if $m \equiv 1,3 \mod{6}$ and $S=\{0,1,3\}$ if $m \equiv 5 \mod{6}$. Then
\[Z_{2,2}(m,n) \leq  \left\lfloor U^- \right\rfloor-1.\]
\end{lemma}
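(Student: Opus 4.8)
The plan is to suppose for a contradiction that some linear hypergraph $H$ on $m$ vertices with $n$ edges attains the maximum total degree $T=\lfloor U^-\rfloor$ permitted by Lemma~\ref{L:basicUpperBounds}, and to show the structure this forces cannot exist. Writing $n_k$ for the number of edges of $H$ of size $k$, letting $D$ be the number of edges of the defect of $H$ (equivalently $\binom m2$ minus the number of covered pairs), and setting $\Delta=\sum_k\tfrac12(k-3)(k-4)n_k$, I would first record the identity $3T=6n+\binom m2-D-\Delta$, which follows from $\sum_k n_k=n$ and $\sum_k\binom k2 n_k=\binom m2-D$. Since $(k-3)(k-4)$ is a nonnegative even integer for every positive integer $k$, vanishing precisely when $k\in\{3,4\}$, the quantity $\Delta$ is a nonnegative integer that equals $0$ if and only if every edge of $H$ has size $3$ or $4$. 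This identity re-proves $T\leq U^-$ and, crucially, pins down the event $T=\lfloor U^-\rfloor$ in terms of the small quantities $D$ and $\Delta$.

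I would then split on $m\bmod 6$. When $m\equiv 1,3\pmod 6$ we have $3\mid\binom m2$, so $\lfloor U^-\rfloor=2n+\tfrac13\binom m2$ and $T=\lfloor U^-\rfloor$ forces $D=\Delta=0$; that is, $H$ is a linear space all of whose edges have size $3$ or $4$. Comparing $\sum_k n_k=n=\tfrac13\binom m2-x$ with $\sum_k\binom k2 n_k=\binom m2$ shows that $H$ has exactly $x$ edges of size $4$. The replication identity $\sum_{E\ni v}(|E|-1)=m-1$ at each vertex $v$, together with $m-1$ being even, forces every vertex to lie in an even number of size-$4$ edges; hence the underlying graph of the size-$4$ edges is an even graph which, by linearity, is an edge-disjoint union of $x$ copies of $K_4$. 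Lemma~\ref{L:evenExistence} then gives $x\geq 5$, contradicting $x\in\{1,2,3,4\}$.

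When $m\equiv 5\pmod 6$ we have $\binom m2\equiv 1\pmod 3$, whence $\lfloor U^-\rfloor=2n+\tfrac13(\binom m2-1)$ and $T=\lfloor U^-\rfloor$ forces $D+\Delta=1$. Thus either one pair is uncovered and all edges have size $3$ or $4$ (the case $(D,\Delta)=(1,0)$), or $H$ is a linear space with a single exceptional edge of size $2$ or $5$ and all other edges of size $3$ or $4$ (the case $(D,\Delta)=(0,1)$, since $\Delta=1$ allows exactly one edge with $(k-3)(k-4)=2$). In each case the edge and covered-pair counts fix the number of size-$4$ edges, and the replication identity determines which vertices lie in an odd number of size-$4$ edges; adjoining the uncovered pair, respectively the exceptional size-$2$ edge, where present then yields an even graph that is an edge-disjoint union of copies of $K_4$ and at most one $K_2$. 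Applying Lemma~\ref{L:evenExistence}: the uncovered-pair case has $x$ copies of $K_4$ and one $K_2$ and is impossible for $x\leq 3$; the size-$5$ case has an even union of $x-2$ copies of $K_4$ and is impossible for $x=3$ (and does not arise for $x\in\{0,1\}$); and the size-$2$ case has $x+1$ copies of $K_4$ and one $K_2$ and is impossible for $x\in\{0,1\}$. This disposes of every configuration for $x\in\{0,1,3\}$ except the size-$2$ case with $x=3$.

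The main obstacle is exactly this surviving configuration: here $H$ is a linear space with four size-$4$ edges $Q_1,\dots,Q_4$, one size-$2$ edge $P$, and all other edges triples, and $G\cup P$ (where $G$ is the union of the four $K_4$'s) is an even edge-disjoint union of four copies of $K_4$ and one $K_2$ with $t=5=|V(G_t)|+1$, so Lemma~\ref{L:evenExistence} is tight and yields no contradiction on its own. Here I would invoke the equality clause of Lemma~\ref{L:evenExistence}, applied to each of the four equal-sized $K_4$'s in turn: this shows each point of each $Q_i$ lies in exactly one further block among $Q_1,\dots,Q_4,P$, and since the two endpoints of $P$ must also lie in some $Q_i$ (else they would have odd degree in $G\cup P$), every one of the nine points of $G\cup P$ lies in exactly two of the five blocks $Q_1,\dots,Q_4,P$. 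Because $H$ is linear, any two of these blocks meet in at most one point, so the block-intersection graph on the five nodes $Q_1,\dots,Q_4,P$, with one edge per point, is \emph{simple} with degree sequence $(4,4,4,4,2)$. But in a simple graph on five vertices each degree-$4$ node is adjacent to all four others, so $P$ would be adjacent to all of $Q_1,\dots,Q_4$ and have degree at least $4$, contradicting its degree $2$. This rules out the last configuration and completes the proof that $Z_{2,2}(m,n)\leq\lfloor U^-\rfloor-1$ for all $x\in S$.
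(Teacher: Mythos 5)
Your proof is correct and takes essentially the same route as the paper's: the same weighted counting identity (your $\Delta$ equals three times the paper's $\sum_i c_in_i$), the same resulting case split into all-$\{3,4\}$-edge configurations with possibly one defect pair, one size-$2$ edge, or one size-$5$ edge, and the same appeal to Lemma~\ref{L:evenExistence} to rule each one out. In particular, for the one surviving configuration (four $K_4$'s plus a $K_2$ when $m\equiv 5\pmod 6$ and $x=3$), your block-intersection-graph argument is a rephrasing of the paper's own use of the equality clause of Lemma~\ref{L:evenExistence}, which likewise concludes by pigeonholing on the two vertices of the $K_2$.
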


\begin{proof}
Let $H$ be a linear hypergraph with $m$ vertices, $n$ edges and total degree $z$. In view of Lemma~\ref{L:basicUpperBounds} it suffices to suppose that $z=\lfloor U^- \rfloor$ and derive a contradiction. Let $d$ be the number of edges in the defect of $H$ and, for each $i \in \{1,\ldots,m\}$, let $n_i$ be the number of edges of $H$ of size $i$. Because $H$ has $n$ edges and has a defect with $d$ edges, we have
\begin{align}
\medop\sum_{i=1}^m n_i &= n \label{E:con1}\\
\medop\sum_{i=1}^m\tbinom{i}{2}n_i &= \tbinom{m}{2}-d\,. \label{E:con2}%\\
%\tfrac{1}{k-1-\alpha}\medop\sum_{i=1}^{k-1} i(i-1-\alpha)n_i + \medop\sum_{i=k}^{m}in_i & \leq \tfrac{1}{k-1}m(\l(m-1)-\alpha) \label{E:con3}
\end{align}
By adding $2$ times \eqref{E:con1} to $\frac{1}{3}$ times \eqref{E:con2}, and then manipulating using $z=\sum_{i=1}^m in_i$, we obtain
\begin{equation}\label{E:weightedSumDef}
z= U^- - \tfrac{d}{3} - \medop\sum_{i=1}^{m} c_in_i
\end{equation}
where $c_i=2+\tfrac{1}{3}\tbinom{i}{2}-i$ for each $i \in \{1,\ldots,m\}$. Observe that $c_i$ is a convex quadratic in $i$ and that $c_i=0$ if $i\in\{3,4\}$, $c_i=\frac{1}{3}$ if $i \in \{2,5\}$ and $c_i \geq 1$ otherwise. Let $\delta=1$ if $m \equiv 5 \mod{6}$ and $\delta=0$ otherwise. By subtracting 3 times \eqref{E:con1} from \eqref{E:con2}, substituting $n=\left\lfloor\tfrac{1}{3}\binom{m}{2}\right\rfloor-x$ and noting that $\binom{m}{2}-3\left\lfloor\tfrac{1}{3}\binom{m}{2}\right\rfloor=\delta$ we have
\begin{equation}\label{E:edgeSizeRestr}
\medop\sum_{i=1}^m\bigl(\tbinom{i}{2}-3\bigr)n_i = 3x+\delta-d.
\end{equation}
Recall that $H_3$ is the $3$-hypergraph with vertex set $V(H)$ and edge multiset $\{E \in E(H):|E|=3\}$. Let $D_3$ be the defect of $H_3$ and note that $D_3$ is an even graph by Lemma~\ref{L:parity}. We will derive a contradiction to this fact in each case. \smallskip

\noindent \textbf{Case 1.} Suppose that $m \equiv 1,3 \mod{6}$ and hence $x \in \{1,2,3,4\}$, $\delta=0$, and $\lfloor U^- \rfloor = U^-$. Then, by \eqref{E:weightedSumDef} and the observation following it, $z=\lfloor U^- \rfloor$ if and only if $d=0$ and $n_i = 0$ for each $i \in \{1,\ldots,m\} \setminus \{3,4\}$. Solving \eqref{E:edgeSizeRestr} in this case reveals that $n_4=x$. But if $n_4=x$ and $d=0$, then $D_3$ is an edge-disjoint union of $x \leq 4$ copies of $K_4$ and (possibly) isolated vertices, and Lemma~\ref{L:evenExistence} gives the contradiction that $D_3$ is not an even graph.\smallskip

\noindent \textbf{Case 2.} Suppose that $m \equiv 5 \mod{6}$ and hence $x \in \{0,1,3\}$, $\delta=1$, and $\lfloor U^- \rfloor = U^--\frac{1}{3}$. Then, by \eqref{E:weightedSumDef} and the observation following it, $z=\lfloor U^- \rfloor$ if and only if one of the following holds.
\begin{itemize}[nosep]
    \item[(i)]
$d=0$, $n_2 = 1$ and $n_i = 0$ for each $i \in \{1,\ldots,m\} \setminus \{2,3,4\}$.
    \item[(ii)]
$d=0$, $n_5 = 1$ and $n_i = 0$ for each $i \in \{1,\ldots,m\} \setminus \{3,4,5\}$.
    \item[(iii)]
$d=1$ and $n_i = 0$ for each $i \in \{1,\ldots,m\} \setminus \{3,4\}$.
\end{itemize}
In subcase (i), solving \eqref{E:edgeSizeRestr} reveals that $n_4=x+1$. Then $D_3$ would be the edge-disjoint union of $x+1$ copies of $K_4$, a copy of $K_2$ and (possibly) isolated vertices. So Lemma~\ref{L:evenExistence} shows that $x=3$ and, furthermore, that each copy of $K_4$ must have a vertex that is in the copy of $K_2$ but is not in any other copy of $K_4$. This is clearly impossible as the copy of $K_2$ has only two vertices. In subcase (ii), solving \eqref{E:edgeSizeRestr} reveals that $n_4=x-2$ and hence that $x=3$. But then $D_3$ would be the edge-disjoint union of a copy of $K_4$, a copy of $K_5$ and isolated vertices, and so obviously is not an even graph. In subcase (iii), solving \eqref{E:edgeSizeRestr} reveals that $n_4=x$. But then $D_3$ would be the edge-disjoint union of one copy of $K_2$, $x \leq 3$ copies of $K_4$ and (possibly) isolated vertices, and Lemma~\ref{L:evenExistence} establishes that $D_3$ is not an even graph.
\end{proof}

Finally in this section, we show that $Z_{2,2}(m,n)$ cannot achieve $\lfloor U^+ \rfloor$ in the cases where Theorem~\ref{T:aboveSTS} specifies this.

\begin{lemma}\label{L:singleStrongUpperBound}
Let $m$ and $n$ be positive integers with $m \equiv 5 \mod{6}$ and $n = \left\lceil\tfrac{1}{3}\binom{m}{2}\right\rceil$. Then
\[Z_{2,2}(m,n) \leq  \left\lfloor U^+ \right\rfloor-1.\]
\end{lemma}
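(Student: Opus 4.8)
The plan is to mirror the proof of Lemma~\ref{L:strongUpperBounds}, but working with the bound $U^+$ in place of $U^-$. I would let $H$ be a linear hypergraph with $m$ vertices, $n$ edges and total degree $z$; by Lemma~\ref{L:basicUpperBounds} it suffices to assume $z=\lfloor U^+\rfloor$ and derive a contradiction. Writing $d$ for the number of edges in the defect of $H$ and $n_i$ for the number of edges of size $i$, the counting identities $\sum_{i=1}^m n_i=n$ and $\sum_{i=1}^m\binom{i}{2}n_i=\binom{m}{2}-d$ still hold. Adding $\tfrac32$ times the first to $\tfrac12$ times the second, and then manipulating using $z=\sum_{i=1}^m i\,n_i$, yields
\[ z = U^+ - \tfrac{d}{2} - \medop\sum_{i=1}^m c_i n_i, \qquad c_i = \tfrac32 + \tfrac12\tbinom{i}{2} - i = \tfrac{(i-2)(i-3)}{4}. \]
The structural facts I would record are that $c_i$ is a convex quadratic in $i$ with $c_i\geq 0$, that $c_i=0$ precisely when $i\in\{2,3\}$, that $c_1=c_4=\tfrac12$, and that $c_i\geq\tfrac32$ for $i\geq 5$.

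Next I would pin down $\lfloor U^+\rfloor$ exactly. Since $m\equiv 5\pmod 6$ forces $\binom{m}{2}\equiv 1\pmod 3$, we have $n=\lceil\tfrac13\binom{m}{2}\rceil=\tfrac13\bigl(\binom{m}{2}+2\bigr)$, and substituting this into $U^+=\tfrac12\binom{m}{2}+\tfrac32 n$ gives the clean value $U^+=\binom{m}{2}+1$, which is an integer. Hence $z=\lfloor U^+\rfloor=U^+$, and the displayed identity forces $\tfrac{d}{2}+\sum_{i=1}^m c_i n_i=0$. As every term here is nonnegative, this forces $d=0$ and $n_i=0$ for every $i\notin\{2,3\}$; that is, $H$ is a linear space all of whose edges have size $2$ or $3$.

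Finally I would solve the two counting identities in this restricted setting. With $d=0$ they read $n_2+n_3=n$ and $n_2+3n_3=\binom{m}{2}$, whose difference gives $2n_3=\binom{m}{2}-n$; substituting the value of $n$ then yields $n_3=\tfrac13\bigl(\binom{m}{2}-1\bigr)$ and $n_2=n-n_3=1$. So $H$ has exactly one edge of size $2$. Now consider $H_3$: its triples cover every pair except the single pair contained in the size-$2$ edge, so the defect $D_3$ of $H_3$ consists of exactly that one edge together with isolated vertices. This graph has two vertices of odd degree, contradicting Lemma~\ref{L:parity}, which (since $m$ is odd) guarantees that $D_3$ is an even graph.

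I expect the only genuinely delicate step to be the exact evaluation $U^+=\binom{m}{2}+1$, since the whole argument hinges on $U^+$ being an integer in this regime, a coincidence special to $n=\lceil\tfrac13\binom{m}{2}\rceil$ when $m\equiv 5\pmod 6$. Once that is in hand the contradiction is softer than in Lemma~\ref{L:strongUpperBounds}: there is a single case and the offending defect is just one edge, so no appeal to Lemma~\ref{L:evenExistence} is needed.
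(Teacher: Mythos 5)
Your proposal is correct and follows essentially the same route as the paper's own proof: the same linear combination ($\tfrac{3}{2}$ times the edge count plus $\tfrac{1}{2}$ times the pair count) yielding $z = U^+ - \tfrac{d}{2} - \sum_i c_i n_i$ with $c_i = \tfrac{1}{2}\tbinom{i}{2}+\tfrac{3}{2}-i$, the same evaluation $U^+ = \tbinom{m}{2}+1$, the same deduction that $d=0$, $n_i=0$ for $i\notin\{2,3\}$ and $n_2=1$, and the same final contradiction with Lemma~\ref{L:parity}. The only cosmetic difference is that you factor $c_i = \tfrac{(i-2)(i-3)}{4}$ explicitly, which the paper leaves as an observation about a convex quadratic.
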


\begin{proof}
Let $H$ be a linear hypergraph with $m$ vertices, $n$ edges and total degree $z$. Note that $n = \frac{m(m-1)+4}{6}$ since $m \equiv 5 \mod{6}$ and hence $\left\lfloor U^+\right\rfloor = U^+ = \binom{m}{2}+1$. In view of Lemma~\ref{L:basicUpperBounds} it suffices to suppose that $z=U^+$ and derive a contradiction. Let $d$ be the number of edges in the defect of $H$ and, for each $i \in \{1,\ldots,n\}$, let $n_i$ be the number of edges of $H$ of size $i$. Note that \eqref{E:con1} and \eqref{E:con2} hold and by
adding $\frac{3}{2}$ times \eqref{E:con1} to $\frac{1}{2}$ times \eqref{E:con2}, and then manipulating using $z=\sum_{i=1}^m in_i$, we obtain
\[
z= U^+ - \tfrac{d}{2} - \medop\sum_{i=1}^{m} c_i n_i
\]
where $c_i=\tfrac{1}{2}\tbinom{i}{2}+\tfrac{3}{2}-i$. Thus, since $z=U^+$, we have $d=0$ and $\sum_{i=1}^{m} c_i n_i=0$. By noting that $c_i$ is a convex quadratic in $i$ with $c_i = 0$ if $i\in\{2,3\}$ and $c_i >0$ otherwise, it follows that $n_i=0$ for each $i \in \{1,\ldots,m\}\setminus\{2,3\}$. Thus, from \eqref{E:con1} and \eqref{E:con2} we can deduce $n_2=\frac{3}{2}n-\frac{1}{2}\binom{m}{2}=1$. So the defect $D_3$ of $H_3$ would have exactly one edge in contradiction to Lemma~\ref{L:parity}.
\end{proof}

In view of Lemmas~\ref{L:basicUpperBounds}, \ref{L:strongUpperBounds} and \ref{L:singleStrongUpperBound}, in order to prove Theorems~\ref{T:aboveSTS} and \ref{T:belowSTS}, it suffices to find linear hypergraphs with the appropriate total degrees and numbers of vertices and edges. We will accomplish this in Sections~\ref{S:above} and \ref{S:below}.

\section{Values of \texorpdfstring{$\bm{n}$}{n} above the triple system threshold}\label{S:above}

Our main goal in this section is to prove Theorem~\ref{T:aboveSTS}. Table~\ref{Tab:aboveSTS} details how we will divide the proof. It gives a division into cases based on the values of $m$ and $n$ and, for each case, the value of $Z_{2,2}(m,n)$ claimed by Theorem~\ref{T:aboveSTS} and the result that will establish this is indeed the case. %HHHH fix???
% So to prove Theorem~\ref{T:aboveSTS} it suffices to prove Lemmas~\ref{L:con from pack}, \ref{L:onlyExceptionAbove} and \ref{L:evenZValue} and verify that they do indeed establish the values of $Z_{2,2}(m,n)$ in the appropriate cases.

\begin{table}[H]
\begin{center}
\begin{tabular}{ll|c|c}
    & case & $Z_{2,2}(m,n)$ & result \\
    \hline\hline
    $m$ even & $n \leq \lfloor\frac{1}{3}\binom{m}{2}+\frac{m}{3}\rfloor$ &  $\lfloor U^0 \rfloor$ & Lemma~\ref{L:evenZValue} \\
    $m\equiv 5 \mod{6}$ & $n=\lceil\frac{1}{3}\binom{m}{2}\rceil$ &  $\lfloor U^+ \rfloor-1$ & Lemma~\ref{L:onlyExceptionAbove} \\
    all other cases &  &  $\lfloor U^+ \rfloor$ & Lemma~\ref{L:con from pack}
\end{tabular}
\caption{Division of the proof of Theorem~\ref{T:aboveSTS}}\label{Tab:aboveSTS}
\end{center}
\end{table}

\subsection*{Achieving $\lfloor U^+ \rfloor$ or $\lfloor U^+ \rfloor-1$}

Recall that $U^+=\tfrac{1}{4}m(m-1)+\frac{3}{2}n$. To show that $Z_{2,2}(m,n)$ achieves $\lfloor U^+ \rfloor$ or $\lfloor U^+ \rfloor-1$ it is enough to use linear $\{2,3\}$-hypergraphs. Spencer \cite{Spencer}, determined the maximum number of edges in a partial Steiner triple system of a given order. His result immediately implies the following.

\begin{theorem}[\cite{Spencer}]\label{l:23-hypergraph}
For any integer $m$, there exists a linear $\{2,3\}$-space on $m$ vertices with $n$ edges of size $3$ if and only if either
\begin{itemize}[itemsep=0mm,parsep=0mm]
    \item
$m$ is even and $n \leq \lfloor\tfrac{1}{3}\binom{m}{2}-\frac{m}{6}\rfloor$;
    \item
$m \equiv 1,3 \mod{6}$ and $n \leq \lfloor\tfrac{1}{3}\binom{m}{2}\rfloor$; or
    \item
$m \equiv 5 \mod{6}$ and $n \leq \lfloor\tfrac{1}{3}\binom{m}{2}\rfloor-1$.
\end{itemize}
\end{theorem}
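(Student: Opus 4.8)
The plan is to reduce the statement to Spencer's determination of the maximum number of triples in a partial Steiner triple system---equivalently, a linear $3$-hypergraph---on $m$ vertices, which I will denote by $D(m)$. The first step is to record the equivalence: a linear $\{2,3\}$-space on $m$ vertices with exactly $n$ edges of size $3$ exists if and only if there is a linear $3$-hypergraph on $m$ vertices with $n$ edges. One direction is immediate, since the size-$3$ edges of a linear $\{2,3\}$-space form a linear $3$-hypergraph with $n$ edges. For the converse, given a linear $3$-hypergraph with $n$ edges, linearity guarantees that each pair of vertices lies in at most one triple; adding a size-$2$ edge for every pair not covered by a triple yields a linear $\{2,3\}$-space in which each pair is covered exactly once and which retains exactly $n$ triples. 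The number of size-$2$ edges added is $\binom{m}{2}-3n \geq 0$, so this completion is always possible.

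With this equivalence in hand, the question becomes: for which $n$ does a linear $3$-hypergraph on $m$ vertices with $n$ edges exist? Deleting an edge from such a hypergraph produces another, so these exist precisely for $0 \leq n \leq D(m)$. I would then invoke Spencer's theorem, which gives $D(m)=\lfloor\tfrac{m}{3}\lfloor\tfrac{m-1}{2}\rfloor\rfloor$ except when $m \equiv 5 \mod{6}$, in which case $D(m)$ is one smaller.

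It remains to check that this single formula for $D(m)$ unpacks into the three stated bounds. For $m$ even, $\lfloor\tfrac{m-1}{2}\rfloor=\tfrac{m}{2}-1$, and a short computation gives $\tfrac{m}{3}\bigl(\tfrac{m}{2}-1\bigr)=\tfrac{1}{3}\binom{m}{2}-\tfrac{m}{6}$, matching the first bullet. For $m$ odd, $\lfloor\tfrac{m-1}{2}\rfloor=\tfrac{m-1}{2}$, so $\tfrac{m}{3}\lfloor\tfrac{m-1}{2}\rfloor=\tfrac{1}{3}\binom{m}{2}$; taking the floor gives $\lfloor\tfrac{1}{3}\binom{m}{2}\rfloor$ when $m \equiv 1,3 \mod{6}$ (the second bullet) and, after subtracting $1$ in the exceptional case, $\lfloor\tfrac{1}{3}\binom{m}{2}\rfloor-1$ when $m \equiv 5 \mod{6}$ (the third bullet). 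The heavy lifting---both the upper bound on $D(m)$ and the matching extremal construction---is entirely Spencer's, so there is no genuine obstacle here; the only points requiring care are the two directions of the reduction (in particular that a sub-maximal linear $3$-hypergraph exists for every $n$ below $D(m)$, which follows by edge deletion) and the elementary arithmetic matching $D(m)$ to the three floor expressions in each residue class modulo $6$.
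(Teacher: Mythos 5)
Your proof is correct and takes essentially the same route as the paper: the paper offers no argument beyond citing Spencer's packing theorem as ``immediately implying'' the statement, and your write-up supplies exactly the routine content behind that implication---the two-way equivalence between linear $\{2,3\}$-spaces with $n$ triples and linear $3$-hypergraphs with $n$ edges (completion by uncovered pairs, monotonicity under edge deletion) together with the arithmetic matching $D(m)$ to the three bounds in each residue class modulo $6$.
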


From Theorem~\ref{l:23-hypergraph} we can establish that $Z_{2,2}(m,n)=\lfloor U^+ \rfloor$ in the cases where Theorem~\ref{T:aboveSTS} specifies this. In fact, this was shown by Guy \cite{Guy} but we give a proof here for the sake of completeness.

\begin{lemma}\label{L:con from pack}
For positive integers $m$ and $n$ with $ n \leq \binom{m}{2}$,
$Z_{2,2}(m,n) = \lfloor U^+\rfloor$
if either
\begin{itemize}[itemsep=0mm,parsep=0mm]
    \item
$m$ is even and $n \geq \left\lceil\tfrac{1}{3}\binom{m}{2}+\frac{m}{3}\right\rceil$;
    \item
$m \equiv 1,3 \mod{6}$ and $n \geq \lceil\tfrac{1}{3}\binom{m}{2}\rceil$; or
    \item
$m \equiv 5 \mod{6}$ and $n \geq \lceil\tfrac{1}{3}\binom{m}{2}\rceil+1$.
\end{itemize}
\end{lemma}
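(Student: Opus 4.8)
Since Lemma~\ref{L:basicUpperBounds} already gives $Z_{2,2}(m,n) \leq \lfloor U^+ \rfloor$, the entire task is to construct, for each $n$ in the stated range, a linear hypergraph on $m$ vertices with $n$ edges and total degree $\lfloor U^+ \rfloor$. In keeping with the subsection heading, the plan is to build these from linear $\{2,3\}$-hypergraphs, and the computation driving everything is the following. Suppose $H$ is a linear $\{2,3\}$-hypergraph on $m$ vertices with $n$ edges, of which $b$ are triples and $p$ are pairs, and whose defect has $d$ edges. Counting edges and counting the pairs of vertices covered by edges gives $b+p=n$ and $3b+p=\binom{m}{2}-d$, whence $b=\tfrac12\bigl(\binom{m}{2}-d-n\bigr)$ and the total degree is $3b+2p=\bigl(\binom{m}{2}-d\bigr)+p=U^+-\tfrac{d}{2}$. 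Thus a linear $\{2,3\}$-hypergraph with $n$ edges and defect $d$ always has total degree exactly $U^+-\tfrac{d}{2}$.

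The quantity $U^+=\tfrac14 m(m-1)+\tfrac32 n$ is an integer when $n\equiv\binom{m}{2}\pmod 2$ and a half-integer otherwise. I would therefore target defect $d=0$ when $n\equiv\binom{m}{2}\pmod 2$ and defect $d=1$ when $n\not\equiv\binom{m}{2}\pmod 2$: in both cases $U^+-\tfrac{d}{2}=\lfloor U^+\rfloor$, and, conveniently, this is exactly the choice of $d$ making $b=\tfrac12\bigl(\binom{m}{2}-d-n\bigr)$ an integer. So any linear $\{2,3\}$-hypergraph realising this $d$ meets the upper bound.

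It then remains only to decide existence. A linear $\{2,3\}$-hypergraph with $n$ edges, $b$ triples and defect $d$ is exactly a partial Steiner triple system (a linear $3$-hypergraph) with $b$ triples, together with a $2$-edge on all but $d$ of the $\binom{m}{2}-3b$ pairs it leaves uncovered; such an object exists if and only if $b$ is a nonnegative integer with $\binom{m}{2}-3b\geq d$ and $b\leq b_{\max}$, where $b_{\max}$ is the maximum number of triples in a partial Steiner triple system of order $m$. The bound $b\leq b_{\max}$ suffices for existence because a system with any $b\leq b_{\max}$ triples is obtained by deleting triples from a maximum one, and $b_{\max}$ itself is furnished by Theorem~\ref{l:23-hypergraph}. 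Substituting $b=\tfrac12\bigl(\binom{m}{2}-d-n\bigr)$, the condition $b\leq b_{\max}$ reads $n\geq\binom{m}{2}-2b_{\max}-d$ and the condition $\binom{m}{2}-3b\geq d$ reads $n\geq\tfrac13\bigl(\binom{m}{2}-d\bigr)$.

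The remaining work, which I expect to be the only genuine obstacle, is the arithmetic of checking that the larger of these two lower bounds on $n$ agrees, for each admissible parity of $n$, with the threshold in the statement. I would split into three cases by $m\bmod 6$ and use the corresponding formula for $b_{\max}$ from Theorem~\ref{l:23-hypergraph}. Three points need care: when $m\equiv 4\pmod 6$ or $m\equiv 5\pmod 6$ the number $\binom{m}{2}$ is not divisible by $3$, so the floor hidden in $b_{\max}$ must be matched against the ceiling in the stated threshold; when $m\equiv 1,3\pmod 6$ a Steiner triple system exists and $b_{\max}=\tfrac13\binom{m}{2}$, so for $d=1$ it is the constraint $\binom{m}{2}-3b\geq 1$ rather than $b\leq b_{\max}$ that binds; and in every case one must confirm that the $d=0$ and $d=1$ thresholds interleave, so that each integer $n$ at or above the stated bound is realised by the value of $d\in\{0,1\}$ matching its parity. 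A brief parity check at the threshold value resolves this last point in each case.
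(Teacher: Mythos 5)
Your proposal is correct and is essentially the paper's own argument: the paper likewise builds a linear $\{2,3\}$-hypergraph via Theorem~\ref{l:23-hypergraph} with $n_3=\lfloor\tfrac{1}{2}\binom{m}{2}-\tfrac{n}{2}\rfloor$ triples (which is exactly your $b=\tfrac12\bigl(\binom{m}{2}-d-n\bigr)$ with $d\in\{0,1\}$ fixed by parity) and $n_2=n-n_3$ pairs, then verifies the same nonnegativity and Spencer-bound constraints that you derive, deferring the same routine mod-$6$ arithmetic. Your defect-$d$ bookkeeping and the interleaving check at the threshold are just a slightly more explicit organization of what the paper calls "routine to check."
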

\begin{proof}
In view of Lemma~\ref{L:basicUpperBounds}, it suffices to find a linear hypergraph with $m$ vertices, $n$ edges, and total degree $\lfloor U^+ \rfloor$. Let $n_3=\lfloor\frac{1}{2}\binom{m}{2}-\frac{n}{2}\rfloor$ and $n_2=n-n_3$. Note that $n_2$ and $n_3$ are nonnegative because $\frac{1}{3}\binom{m}{2} \leq n \leq \binom{m}{2}$. We will use Theorem~\ref{l:23-hypergraph} to construct a linear $\{2,3\}$-hypergraph with $n_2$ edges of size 2 and $n_3$ edges of size $3$. This will suffice to complete the proof because $n_2+n_3=n$ and $2n_2+3n_3=\lfloor U^+ \rfloor$.

Using the hypothesised bounds on $n$, it is routine to check that $n_3$ % \leq \lfloor\frac{m}{3}\lfloor\frac{m-1}{2}\rfloor\rfloor$ and $n_3 \neq \lfloor\frac{m}{3}\lfloor\frac{m-1}{2}\rfloor\rfloor$ if $m \equiv 5 \pmod{6}$. Thus by
satisfies the hypotheses of Theorem~\ref{l:23-hypergraph}. Thus there is a linear $\{2,3\}$-space $H$ on $m$ vertices with $|E(H_3)|=n_3$ and hence with $|E(H_2)|=\binom{m}{2}-3n_3$. We can obtain the desired hypergraph from this linear space by deleting edges of size 2 provided that $\binom{m}{2}-3n_3$ is at least $n_2=n-n_3$. This is the case because
\[\tbinom{m}{2}-3n_3-n_2=\tbinom{m}{2}-n-2\left\lfloor\tfrac{1}{2}\tbinom{m}{2}-\tfrac{n}{2}\right\rfloor \geq 0. \qedhere\]
%By Lemma~\ref{L:basicUpperBounds}, $Z_{2,2}(m,n) \leq \left\lfloor\tfrac{1}{4}m(m-1)+\tfrac{3}{2}n\right\rfloor$. So it suffices to find a hypergraph on $m$ vertices with $n$ edges and total degree $\left\lfloor\tfrac{1}{4}m(m-1)+\tfrac{3}{2}n\right\rfloor$.
%Let $ \theta = \lfloor\frac{m}{3}\lfloor\frac{m-1}{2}\rfloor\rfloor$ if $m\not\equiv 5 \pmod 6$ and
%$ \theta = \lfloor\frac{m}{3}\lfloor\frac{m-1}{2}\rfloor\rfloor-1$ otherwise.
%Notice that
%\[
%\binom{m}{2}-2\theta -1 \leq n \leq \binom{m}{2}
%\]
%where the first inequality is strict when $m \equiv 1,3 \pmod 6$.
%So
%let $0 \leq t \leq \theta$
%and $0 \leq u \leq 1$ be the integers such that $n = \binom{m}{2}-2\theta +2t-u$, where $u$ is not 1 when $t =0$ and $m\equiv 1,3\pmod{6}$.
%Let $n_2 = \binom{m}{2}-3\theta+3t-u$ and $n_3=\theta-t$ and note that $n_2$ is non-negative,
%since by Lemma~\ref{(23)-packing}, $\binom{m}{2}-3\theta \geq 0$ with equality if and only if $m \equiv 1,3 \pmod{6}$.
%Then by Lemma~\ref{l:23-hypergraph}, there exists a hypergraph $H$ with $E(H_2)=n_2$ and $E(H_3)=n_3$.
%Clearly, $H$ has $n_2+n_3=n$ edges and total degree
%\[
%2n_2+3n_3  = 2\binom{m}{2}-3\theta+3t-2u =
%\frac{1}{2}\binom{m}{2}+\frac{3}{2}n-\frac{u}{2} =
%\left\lfloor \frac{1}{2}\binom{m}{2}+\frac{3}{2}n \right\rfloor\,,
%\]
%since $\binom{m}{2}$ and $n$ can not have the same parity when $u=1$.
%Hence, $H$ is a hypergraph with the desired properties and the result follows.
\end{proof}

Theorem~\ref{l:23-hypergraph} also allows us to resolve the special case of Theorem~\ref{T:aboveSTS} when $m \equiv 5 \mod{6}$ and $n = \lceil \frac{1}{3}\binom{m}{2}\rceil$.

\begin{lemma}\label{L:onlyExceptionAbove}
If $m \equiv 5 \mod{6}$ and $n = \lceil \frac{1}{3}\binom{m}{2}\rceil$, then $Z_{2,2}(m,n)=\lfloor U^+ \rfloor-1$.
\end{lemma}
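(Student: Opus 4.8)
The plan is to establish the matching upper and lower bounds separately. The upper bound $Z_{2,2}(m,n) \leq \lfloor U^+ \rfloor - 1$ is already furnished by Lemma~\ref{L:singleStrongUpperBound}, so the entire task reduces to constructing a linear hypergraph with $m$ vertices, $n = \lceil\tfrac{1}{3}\binom{m}{2}\rceil$ edges, and total degree exactly $\lfloor U^+ \rfloor - 1$. Since $m \equiv 5 \mod{6}$, I first record the arithmetic: $n = \tfrac{1}{6}(m(m-1)+4)$ and $\lfloor U^+ \rfloor = U^+ = \binom{m}{2}+1$, so $\lfloor U^+\rfloor - 1 = \binom{m}{2}$. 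Thus I seek a linear hypergraph on $m$ vertices with $n$ edges whose total degree is exactly $\binom{m}{2}$, i.e. a hypergraph in which every pair of vertices occurs in some edge: a \emph{linear space}.

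The strategy is to restrict attention to linear $\{2,3\}$-spaces, as in Lemma~\ref{L:con from pack}. A linear $\{2,3\}$-space on $m$ vertices with $n_3$ triples and $n_2$ pairs automatically has total degree $\binom{m}{2}$ (each pair is covered exactly once), and has $n_2 + n_3 = n$ edges precisely when $3n_3 + n_2 = \binom{m}{2}$ and $n_2 + n_3 = n$; solving gives $n_3 = \tfrac{1}{2}(\binom{m}{2}-n)$ and $n_2 = n - n_3$. Plugging in $n = \tfrac{1}{6}(m(m-1)+4)$ yields $n_3 = \tfrac{1}{2}\binom{m}{2} - \tfrac{1}{2}n = \lfloor\tfrac{1}{3}\binom{m}{2}\rfloor - 1$ after simplification (using $\binom{m}{2} \equiv 2 \mod 3$ when $m \equiv 5 \mod 6$). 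The key observation is that this value $n_3 = \lfloor\tfrac{1}{3}\binom{m}{2}\rfloor - 1$ is \emph{exactly} the bound appearing in the third bullet of Theorem~\ref{l:23-hypergraph} for the case $m \equiv 5 \mod 6$. So Theorem~\ref{l:23-hypergraph} guarantees a linear $\{2,3\}$-space with $n_3$ triples, and the remaining $\binom{m}{2} - 3n_3 = n_2$ pairs fill out the rest to form the full linear space.

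Concretely, I would invoke Theorem~\ref{l:23-hypergraph} to obtain a linear $\{2,3\}$-space $H$ on $m$ vertices with $|E(H_3)| = n_3 = \lfloor\tfrac{1}{3}\binom{m}{2}\rfloor - 1$. Since $H$ is a linear space, it has $|E(H_2)| = \binom{m}{2} - 3n_3$ pairs, and one checks that $\binom{m}{2} - 3n_3 = n_2$, so $H$ itself is the desired hypergraph: it has $n_2 + n_3 = n$ edges and total degree $2n_2 + 3n_3 = \binom{m}{2} = \lfloor U^+\rfloor - 1$. Combined with Lemma~\ref{L:singleStrongUpperBound} this yields the result.

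The only genuine subtlety is verifying that the value $n_3$ produced by the arithmetic coincides exactly with the threshold in Theorem~\ref{l:23-hypergraph}; this is a short but essential congruence computation relying on $m \equiv 5 \mod 6$, and it is what makes the maximum partial Steiner triple system land precisely one pair short of a full Steiner triple system (which cannot exist for $m \equiv 5 \mod 6$). There is no real obstacle beyond this bookkeeping, since unlike Lemma~\ref{L:con from pack} there is no inequality slack to manage: the numbers must match on the nose, and they do. I would present the construction directly and let the verification of $n_2, n_3$ be the routine calculation that seals the proof.
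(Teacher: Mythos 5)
Your proposal contains a fatal confusion between total degree and the number of pairs covered, and it causes the whole construction to collapse. A linear $\{2,3\}$-space with $n_2$ edges of size $2$ and $n_3$ edges of size $3$ covers $n_2+3n_3=\binom{m}{2}$ pairs, but its \emph{total degree} is $2n_2+3n_3=\binom{m}{2}+n_2$, not $\binom{m}{2}$. So the object you set out to build --- a linear \emph{space} with $n$ edges --- is not what the lemma needs; solving your own equations correctly, such a space would have $n_2=1$ and $n_3=\lfloor\frac{1}{3}\binom{m}{2}\rfloor$, hence total degree $\binom{m}{2}+1=U^+$, which is precisely the configuration that the proof of Lemma~\ref{L:singleStrongUpperBound} shows cannot exist (by the parity argument of Lemma~\ref{L:parity}). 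Your arithmetic hides this: for $m\equiv 5 \pmod{6}$ we have $\binom{m}{2}\equiv 1 \pmod{3}$ (not $2$), so $n=\frac{1}{3}\binom{m}{2}+\frac{2}{3}$ and $n_3=\frac{1}{2}\bigl(\binom{m}{2}-n\bigr)=\lfloor\frac{1}{3}\binom{m}{2}\rfloor$, which is one \emph{more} than the threshold $\lfloor\frac{1}{3}\binom{m}{2}\rfloor-1$ in the third bullet of Theorem~\ref{l:23-hypergraph}. Since that theorem is an equivalence, it tells you the linear space you want does not exist; your claim that ``the numbers must match on the nose, and they do'' is exactly backwards. This is not a repairable bookkeeping slip within your framework: the premise ``total degree $\binom{m}{2}$ forces a linear space'' is what must be abandoned.

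The correct construction (the paper's) accepts a hypergraph that leaves two pairs uncovered. Apply Theorem~\ref{l:23-hypergraph} with $n_3=\frac{1}{3}\binom{m}{2}-\frac{4}{3}=\lfloor\frac{1}{3}\binom{m}{2}\rfloor-1$, the maximum allowed; the resulting linear $\{2,3\}$-space has $\binom{m}{2}-3n_3=4$ edges of size $2$. Deleting two of these four edges yields a linear (but not complete) $\{2,3\}$-hypergraph with $n_3+2=\frac{1}{3}\binom{m}{2}+\frac{2}{3}=n$ edges and total degree $3n_3+4=\binom{m}{2}=\lfloor U^+\rfloor-1$, and Lemma~\ref{L:singleStrongUpperBound} supplies the matching upper bound. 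Note the structural point your approach misses: the extremal hypergraph here has a nonempty defect (two uncovered pairs), and that is forced --- the parity obstruction that makes $\lfloor U^+\rfloor$ unattainable equally rules out any defect-zero construction of total degree $\lfloor U^+\rfloor-1$ with $n$ edges.
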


\begin{proof}
Note that $n=\lceil\frac{1}{3}\binom{m}{2}\rceil=\frac{1}{3}\binom{m}{2}+\frac{2}{3}$ since $m\equiv 5 \pmod{6}$. By applying Theorem~\ref{l:23-hypergraph} with $n_3=\frac{1}{3}\binom{m}{2} - \frac{4}{3}$ and deleting two of the four edges of size 2 from the resulting linear space, there exists a linear $\{2,3\}$-hypergraph $H$ with $|E(H_2)|=2$ and $|E(H_3)|=\frac{1}{3}\binom{m}{2} - \frac{4}{3}$. It is easy to check that $H$ has $\lceil \frac{1}{3}\binom{m}{2}\rceil$ edges and total degree $\lfloor U^+ \rfloor -1=\binom{m}{2}$. Thus Lemma~\ref{L:singleStrongUpperBound} completes the proof.
\end{proof}

\subsection*{Achieving $\lfloor U^0 \rfloor$}

%\com{this was the rough text accompanying the GDD stuff}
%In this section we prove Theorem~\ref{T:k=3Exact}.
%Lemmas~\ref{L:aBoundWithEq} and~\ref{L:bBoundWithEq} establish the required upper bounds;
%the special case when $m$ is odd and $\lfloor\frac{m(m-1)}{6}\rfloor-1 \leq  n \leq \lceil\frac{m(m-1)}{6}\rceil$ is dealt with using Lemma~\ref{L:ex case bound}, later in the section.
%Therefore to prove Theorem~\ref{T:k=3Exact}, it remains to provide lower bounds by constructing particular partial linear spaces. We first provide constructions of partial linear spaces
%with either 2 or 3 different line sizes, using packings and group divisible designs, respectively.
%The proof of Theorem~\ref{T:k=3Exact} occurs at the end of the section and is separated into cases depending on the residue of $m$ modulo $6$.

Recall that $U^{0} = \tfrac{1}{14}m(3m-4)+\tfrac{12}{7}n$. In general it does not suffice to use $\{2,3\}$-hypergraphs and $\{3,4\}$-hypergraphs to establish that $Z_{2,2}(m,n)$ meets $\lfloor U^0 \rfloor$ in the cases where Theorems~\ref{T:aboveSTS} and \ref{T:belowSTS} specify this. Instead we require $\{2,3,4\}$-hypergraphs. To construct these, we make use of results on group divisible designs. Let $g_1,\ldots,g_s$ be distinct positive integers and $a_1,\ldots,a_s$ be nonnegative integers. A \emph{$3$-$\GDD$ of type $g_1^{a_1}g_2^{a_2}\ldots g_s^{a_s}$} is a linear $\{3,g_1,\ldots,g_s\}$-space with $\sum_{i=1}^sa_ig_i$ vertices whose edge multiset admits a partition $\{\B,\mathcal{G}\}$ such that each edge in $\B$ has size $3$, $\mathcal{G}$ contains exactly $a_i$ edges of size $g_i$ for each $i \in \{1,\ldots,s\}$, and the edges in $\mathcal{G}$ form a partition of the vertex set. The edges in $\mathcal{G}$ are called the \emph{groups} of the $3$-$\GDD$.

We will make use of the following two results on the existence of $3$-GDDs. \pagebreak

\begin{theorem}[\cite{Zhu}]\label{3-GDD-uni}
Let $h$ and $w$ be positive integers. There exists a $3$-$\GDD$ of type $h^w$ if and only if
\begin{itemize}[itemsep=0mm,parsep=0mm,topsep=1mm]
    \item[\textup{(i)}]
$w\geq 3$ or $w=1$;
    \item[\textup{(ii)}]
$(w-1)h\equiv 0 \pmod{2}$; and
    \item[\textup{(iii)}]
$w(w-1)h^2 \equiv 0 \mod{6}$.
\end{itemize}
\end{theorem}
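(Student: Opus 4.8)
The plan is to prove the statement through the equivalent reformulation in terms of graph decompositions: a $3$-$\GDD$ of type $h^w$ is precisely a decomposition of the complete multipartite graph $K_{w \times h}$ (with $w$ parts of size $h$) into triangles, where the parts play the role of the groups. Indeed, since the linear space condition forces every within-group pair to lie in a group edge and hence in no block, each block must meet three \emph{distinct} groups, so the blocks are exactly the triangles of such a decomposition. This viewpoint makes both directions transparent.

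With this reformulation, necessity is a short counting argument. If $w=2$ then no block can meet three distinct groups, yet cross-group pairs exist whenever $h \geq 1$; hence $w=1$ or $w \geq 3$, giving (i). Fixing a vertex $x$, there are $(w-1)h$ cross-group pairs through $x$ and each block through $x$ covers exactly two of them, so $(w-1)h$ is even, giving (ii). Finally, the $\binom{w}{2}h^2$ cross-group pairs are each covered by exactly one block while each block covers three, so $\tfrac{1}{6}w(w-1)h^2$ is a nonnegative integer, giving (iii).

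For sufficiency I would first reduce the group size by \emph{inflation}: given a $3$-$\GDD$ of type $g^w$, replace each point by $h$ points and each block by a triangle decomposition of $K_{h,h,h}$ (equivalently, a Latin square of order $h$, which always exists) to obtain a $3$-$\GDD$ of type $(gh)^w$. Since the admissibility conditions translate into ``all $h$'' when $w \equiv 1,3 \pmod{6}$, ``$h$ even'' when $w \equiv 0,4 \pmod{6}$, ``$3 \mid h$'' when $w \equiv 5 \pmod{6}$, and ``$6 \mid h$'' when $w \equiv 2 \pmod{6}$, inflation reduces the problem to realizing the base types $1^w$, $2^w$, $3^w$ and $6^w$ in the respective residue classes. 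I would then reduce the number of groups by a master-design construction: if a pairwise balanced design on $w$ points has all block sizes in a set $K$ and a $3$-$\GDD$ of type $h^k$ exists for every $k \in K$, then partitioning the cross-group pairs according to the blocks, filling each with the corresponding ingredient, and adjoining the $w$ groups once yields a $3$-$\GDD$ of type $h^w$. Taking $K=\{3\}$ (a Steiner triple system on the groups, which exists exactly for $w \equiv 1,3 \pmod 6$) together with the Latin-square ingredient $h^3$ then settles $w \equiv 1,3 \pmod 6$ outright, for every admissible $h$.

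The main obstacle is the remaining finite library of base constructions and the bookkeeping needed to make the recursion cover every admissible $w$. One must directly construct a handful of small ingredient designs (such as types $2^3$, $2^4$, $3^5$ and $6^2$), verify that the requisite pairwise balanced designs and sets of mutually orthogonal Latin squares exist in each residue class (invoking Wilson's fundamental construction by weighting where convenient), and dispose separately of the small values of $w$ that fall outside the range where these recursions apply. This bootstrapping---guaranteeing that every admissible pair $(h,w)$ is reachable from the base ingredients through inflation, master-design filling and weighting, including the sporadic small orders---is exactly the delicate part of the argument, and is where I would expect to lean on established existence results for Steiner triple systems, Latin squares, and the relevant small group divisible designs.
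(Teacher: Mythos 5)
First, a point of comparison: the paper does not prove this statement at all --- it is quoted as a known result, attributed to Zhu's survey \cite{Zhu} (the theorem itself goes back to Hanani), and used as a black box in the proofs of Theorem~\ref{T:3GDD4s2s} and Lemma~\ref{L:evenMHypergraphs}. So there is no internal proof to measure you against; your sketch has to be judged against the standard literature proof, which is in fact exactly the route you describe. Your necessity direction is complete and correct: the reformulation as a triangle decomposition of the complete multipartite graph $K_{w\times h}$ is legitimate (linearity plus the group partition forces every block to meet three distinct groups), and the three counting arguments (no triangle can cross only two groups; each block through a fixed vertex $x$ covers two of the $(w-1)h$ cross-pairs at $x$; each block covers three of the $\binom{w}{2}h^2$ cross-pairs) yield (i), (ii), (iii) exactly. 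Your translation of the admissibility conditions by residue of $w$ modulo $6$ is also correct, and the two recursive tools you invoke --- inflation via Latin squares (replacing points by $h$-sets and blocks by triangle decompositions of $K_{h,h,h}$), and filling the blocks of a pairwise balanced design on the groups --- are the right ones, correctly stated.

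The genuine gap is that your sufficiency direction is a plan rather than a proof, and the part you defer is the actual mathematical content of the theorem. After inflation, everything rests on the existence of the base families: $1^w$ for $w \equiv 1,3 \pmod{6}$ (Steiner triple systems, which you do handle), $2^w$ for $w \equiv 0,1 \pmod{3}$, $3^w$ for $w$ odd, and $6^w$ for all $w \geq 3$. For these you wave at pairwise balanced designs ``with block sizes in a set $K$'' without ever specifying $K$, verifying that the needed PBDs exist for every admissible $w$, or constructing the sporadic exceptions --- and the exceptions are real. For instance, no PBD on $6$ points with block sizes in $\{3,4\}$ exists, so the type $2^6$ design (which this very paper relies on in Case 1b of the proof of Theorem~\ref{T:3GDD4s2s}) cannot be reached by your recursion and must be built by hand; similarly $w \equiv 5 \pmod{6}$ for type $3^w$, and small orders such as $w \in \{6,8\}$ for type $6^w$, require explicit ingredient designs and a verified PBD closure for each residue class. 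You are candid that this bootstrapping is ``the delicate part,'' but candour does not discharge it: every admissible $(h,w)$ must be shown reachable, and until that finite library and the accompanying PBD existence statements are pinned down (with proofs or precise citations), the ``if'' direction remains unestablished. As submitted, this is a correct and well-organised reduction of the theorem to Hanani-style bookkeeping, not a proof of it.
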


\begin{theorem}[\cite{Colbourn}]\label{3-GDD}
Let $g$, $h$ and $w$ be positive integers with $g \neq h$. There exists a $3$-$\GDD$ of type $g^1h^w$ if and only if
\begin{itemize}[itemsep=0mm,parsep=0mm,topsep=1mm]
    \item[\textup{(i)}]
$w \geq 3$;
    \item[\textup{(ii)}]
$g\leq h(w-1)$;
    \item[\textup{(iii)}]
$h(w-1)+g\equiv 0 \mod{2}$;
    \item[\textup{(iv)}]
$hw\equiv0 \mod{2}$; and
    \item[\textup{(v)}]
$\frac{1}{2}h^2w(w-1)+ghw \equiv 0 \mod{3}$.
\end{itemize}
\end{theorem}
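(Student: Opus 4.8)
The plan is to prove necessity by counting and sufficiency by a recursive design-theoretic construction.

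For \textbf{necessity}, write the groups as $G_0$ of size $g$ together with $G_1,\ldots,G_w$ of size $h$, and recall that every block is a triple meeting each group at most once, while every pair of points from distinct groups lies in exactly one block. First I would count the blocks through a fixed point. A point of $G_0$ must be paired with each of the $wh$ points outside $G_0$, and each block through it absorbs two of these, so it lies in $wh/2$ blocks; integrality gives (iv). Similarly a point of some $G_i$ with $i \geq 1$ lies in $(g+(w-1)h)/2$ blocks, giving (iii). Counting the total number of blocks via cross pairs, $b=\tfrac13\bigl(gwh+\binom{w}{2}h^2\bigr)$ must be an integer, which is exactly (v). For (ii), fix $y \in G_1$: each of the $g$ points of $G_0$ is paired with $y$ in a distinct block, and the third point of each such block lies in $G_2 \cup \cdots \cup G_w$ and must be distinct across these blocks by linearity, forcing $g \leq (w-1)h$. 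Finally (i): if $w=1$ there are only two groups, so no triple can meet three distinct groups and no cross pair can be covered; and if $w=2$ then every block meets all three groups exactly once, so double counting the $G_1$--$G_2$ pairs and the $G_0$--$G_1$ pairs forces $h^2=gh$, i.e.\ $g=h$, contradicting our hypothesis. Hence $w \geq 3$.

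For \textbf{sufficiency} the main engine is Wilson's Fundamental Construction together with the uniform existence result, Theorem~\ref{3-GDD-uni}. The idea is to take as master a transversal design (equivalently a $(w+1)$-GDD of type $t^{w+1}$, which exists for all sufficiently large $t$), assign integer weights to its points so that the weights in one designated master-group sum to $g$ and the weights in each remaining master-group sum to $h$, and then replace every transversal block by an ingredient $3$-$\GDD$ whose group sizes are the weights of the points on that block. Since a block-size-$3$ master only admits equal-weight ingredients (three groups of distinct sizes cannot be triangulated, as a $3$-$\GDD$ of type $a^1b^1c^1$ forces $a=b=c$), I would use a master with blocks of size at least $4$ and ingredient $3$-$\GDD$s of small uniform or near-uniform types — for instance type $2^4$ and its relatives, whose existence follows from Theorem~\ref{3-GDD-uni} — choosing the weights (typically in $\{0,2\}$ or another convenient set) so that every weight pattern occurring on a block corresponds to an ingredient known to exist. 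A final \emph{filling} step would then realise the exact values $g$ and $h$ on each output group.

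The execution then splits along the residues of $g$ and $h$ modulo $6$ (to respect the parity conditions (iii), (iv) and the divisibility condition (v)) and reduces the general case, by induction on $w$ and repeated inflation, to a finite list of base designs. I expect the \textbf{main obstacle} to be precisely this finite list: the boundary case of equality $g=(w-1)h$ in (ii), the smallest admissible values of $w$ (where the recursion has no room to operate), and a handful of potentially sporadic parameter triples that no inflation reaches. These must be settled by explicit ad hoc constructions (or, failing a clean pattern, by a direct search), and confirming that conditions (i)--(v) really are sufficient for every one of them — with no genuine exception surviving — is the delicate heart of the argument.
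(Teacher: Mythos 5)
First, a point of context: the paper does not prove this statement at all --- it is imported verbatim, with citation, from Colbourn, Hoffman and Rees \cite{Colbourn}, so there is no internal proof to compare yours against; the relevant benchmark is that reference, where the sufficiency direction occupies essentially the whole paper. Your necessity argument is correct and standard: the replication count at a point of the group of size $g$ gives (iv), the count at a point of a group of size $h$ gives (iii), the global block count gives (v), the $g$ blocks joining a fixed $y$ in a group of size $h$ to the points of the group of size $g$ have pairwise distinct third points among the remaining $(w-1)h$ points, giving (ii), and your analyses of $w=1$ (no transversal triple can exist) and $w=2$ (every block is a transversal, so counting cross pairs forces $g=h$) give (i).

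The gap is in sufficiency, and it is genuine rather than presentational: everything that makes the theorem true is deferred. Concretely: (a) in the proposed Wilson-style construction with a transversal design master, the ingredient required on a block whose points carry $j$ nonzero weights equal to $2$ is a $3$-$\GDD$ of type $2^j$, which by Theorem~\ref{3-GDD-uni} exists only when $j \equiv 0,1 \pmod{3}$; your sketch has no mechanism forcing every block of the master to meet the support of the weighting in an admissible number of points, and arranging that is itself a nontrivial design problem, not a routine choice of weights. (b) Weights drawn from an even set can only produce even group sizes, yet conditions (iii)--(v) admit triples with $h$ odd (for instance $h$ odd, $w$ even, $g$ odd), so whole residue classes lie outside the scheme as described and you do not specify the ingredient families that replace it. (c) The extremal case $g=h(w-1)$ and the smallest admissible $w$ are exactly where inflation recursions have no room, and you explicitly leave these to ``ad hoc constructions or a direct search'' without exhibiting a single one, while acknowledging that this is ``the delicate heart of the argument.'' Since the sufficiency of (i)--(v) is precisely that heart --- settled in \cite{Colbourn} by extensive explicit difference-method constructions and filling arguments --- your proposal as written establishes only the necessity half; for the purposes of this paper the statement is correctly handled the way the authors handle it, namely by citation.
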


We also note that, in \cite{Colbourn0}, the existence problem is solved for every possible type of $3$-$\GDD$ with at most 60 vertices. In addition we will make use of Theorem~\ref{T:3GDD4s2s}, which we now prove.

\begin{proof}[\textbf{\textup{Proof of Theorem~\ref{T:3GDD4s2s}.}}]
If a $3$-$\GDD$ of type $4^u2^v$ exists, then $16\binom{u}{2}+4\binom{v}{2}+8uv \equiv 0 \mod{3}$ by counting pairs of vertices in edges of size 3. Hence, multiplying through by 2, we must have $u(u-1)+v(v-1)+uv \equiv 0 \mod{3}$. From this it is easy to deduce that (i) and (ii) hold. So it remains to show that (i) and (ii) imply the existence of a $3$-$\GDD$ of type $4^u2^v$. If $u=0$ or $v=0$, then this can be seen to follow from Theorem~\ref{3-GDD-uni}. If $u=1$ or $v=1$, this can be seen to follow from Theorem~\ref{3-GDD}. So we may assume $u \geq 3$ and $v \geq 3$. We divide the proof into two cases according to the congruence class of $u$ modulo 3.\smallskip

\noindent\textbf{Case 1.} Suppose that $u \equiv 0 \mod{3}$. We further divide this case according to the value of $v$, remembering that $v \equiv 0,1 \mod{3}$.\smallskip

\noindent\textbf{Case 1a.} Suppose that $v \leq 2u-2$. Then there exists a $3$-$\GDD$ of group type $(2v)^14^u$, by Theorem~\ref{3-GDD}. The required $3$-$\GDD$ can be obtained from this by replacing the group of size $2v$ with a $3$-$\GDD$ of type $2^v$, which exists by Theorem~\ref{3-GDD-uni}.

\noindent\textbf{Case 1b.} Suppose that $v = 2u$. Since $u\equiv 0 \pmod 3$, it follows
that $v  \equiv 0 \mod {6}$. It was shown in \cite{Colbourn0} that a $3$-$\GDD$ of type $4^32^6$ exists, so we may assume $u \geq 6$. Then $\frac{2u}{3} \geq 4$, so there exists a $3$-$\GDD$ of type $12^{2u/3}$ by Theorem~\ref{3-GDD-uni}. The required $3$-$\GDD$ can be obtained from this by replacing $\frac{u}{3}$ groups with $3$-GDDs of type $4^3$ and the remaining $\frac{u}{3}=\frac{v}{6}$ groups with $3$-GDDs of type $2^6$. Both of the ingredients exist by Theorem~\ref{3-GDD-uni}.

\noindent\textbf{Case 1c.} Suppose that $v \geq 2u+1$. Then there exists a $3$-$\GDD$ of type  $(4u)^12^{v}$ by Theorem~\ref{3-GDD}.
The required $3$-$\GDD$ can be obtained from this by replacing the group of size $4u$ with a $3$-$\GDD$ of group type $4^u$, which exists by Theorem~\ref{3-GDD-uni}.

\noindent\textbf{Case 2.} Suppose that $u \equiv 1 \mod{3}$ and hence that $v \equiv 0 \mod{3}$. If $4u+2v < 40$, then $(u,v) \in \{(4,3),(4,6),(4,9),(7,3)\}$ and, in each of these cases, a $3$-$\GDD$ of type $4^u2^v$ has been shown to exist in \cite{Colbourn0}. Thus we may suppose that $4u+2v \geq 40$. Let $\ell=4$ if $v \equiv 0 \mod{6}$ and $\ell=10$ if $v \equiv 3 \mod{6}$. Let $w=\frac{4u+2v-\ell}{12}$ and note that $w$ is an integer and $w \geq 3$ using $4u+2v \geq 40$ and the definition of $\ell$. By Theorem~\ref{3-GDD}, there is a $3$-$\GDD$ of type $\ell^1 12^w$. The required $3$-$\GDD$ can be obtained from this by replacing $\frac{u-1}{3}$ groups of size 12 with $3$-GDDs of type $4^3$, the remaining $\frac{2v+4-\ell}{12}$ groups of size 12 with $3$-GDDs of type $2^6$ and, if $\ell=10$, the group of size 10 with a $3$-GDD of type $4^1 2^3$. The former two ingredients exist by Theorem~\ref{3-GDD-uni} and the last exists by Theorem~\ref{3-GDD}.
\end{proof}

Using Theorems~\ref{T:3GDD4s2s} and \ref{3-GDD} we can prove the following lemma which will supply the linear $\{2,3,4\}$-hypergraphs we require.

\begin{lemma}\label{L:evenMHypergraphs}
Let $m$ and $n_4$ be integers such that $m \geq 2$ is even, $0 \leq n_4 \leq \lfloor \frac{m}{4} \rfloor$ and $n_4 \neq \lfloor \frac{m}{4} \rfloor$ if $m \equiv 6, 8, 10 \mod{12}$. There exists a linear $\{2,3,4\}$-space on $m$ vertices with $n_4$ edges of size $4$ and $\lfloor\frac{m(m-2)}{6}-\frac{4}{3}n_4\rfloor$ edges of size $3$.
\end{lemma}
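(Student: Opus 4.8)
The plan is to realise the required hypergraph as an edge-decomposition of $K_m$ into $n_4$ copies of $K_4$, the prescribed number of copies of $K_3$, and copies of $K_2$, assembled from group divisible designs. First I would set $v=\tfrac{m}{2}-2n_4$, so that $4n_4+2v=m$ and $v\ge 0$, and write $N=\tfrac{m(m-2)}{6}-\tfrac{4}{3}n_4$. Since $m$ is even, $3N\in\Z$, so the fractional part $\langle N\rangle$ of $N$ lies in $\{0,\tfrac13,\tfrac23\}$. Counting pairs through $\binom{m}{2}=n_2+3n_3+6n_4$ with $n_3=\lfloor N\rfloor$ then shows that the number of size-$2$ edges forced on us is $n_2=v+3\langle N\rangle\in\{v,v+1,v+2\}$, and that $n_2=v$ exactly when $N\in\Z$. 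This splits the argument into an integer case and a non-integer case.

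In the integer case ($N\in\Z$) we have $16\binom{n_4}{2}+4\binom{v}{2}+8n_4v\equiv 0\pmod 3$, so, exactly as in the proof of Theorem~\ref{T:3GDD4s2s}, conditions (i) and (ii) of that theorem hold with $u=n_4$ and the present $v$. A $3$-GDD of type $4^{n_4}2^{v}$ therefore exists; reading its groups as the edges of size $4$ and $2$ and its blocks as the edges of size $3$, it is precisely the desired linear $\{2,3,4\}$-space, with $n_3=N$.

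In the non-integer case $n_2\in\{v+1,v+2\}$ and no $3$-GDD of type $4^{n_4}2^{v}$ exists. The idea is to collect $c=n_4\bmod 3$ of the copies of $K_4$ into a single oversized group: take a $3$-GDD of type $4^{d}g^1$ with $d=n_4-c$ and $g=2v+4c$ (so $4d+g=m$), and then fill the group of size $g$ with a linear $\{2,3,4\}$-space on $g$ vertices having exactly $c$ edges of size $4$, exactly $n_2$ edges of size $2$, and the maximum number of edges of size $3$. Because $d\equiv 0\pmod 3$, condition (v) of Theorem~\ref{3-GDD} is automatic; in fact a short computation shows that (v) is equivalent to the statement that the size-$2$ count demanded of the filling matches $n_2$, so the bookkeeping is consistent. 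When $c=0$ the filling is a $\{2,3\}$-space and is supplied directly by Theorem~\ref{l:23-hypergraph}, since the target $n_2=v+3\langle N\rangle$ turns out to be exactly the minimum number of size-$2$ edges on $2v$ vertices; when $c\in\{1,2\}$ the filling is a strictly smaller instance of the lemma (with $n_4$ replaced by $c$), which I would handle by induction on $m$ or, equivalently, by placing the $c$ copies of $K_4$ and taking a maximum triangle packing of the remaining graph.

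I expect the non-integer case to be the main obstacle, for two reasons. First, the single-oversized-group construction needs condition (ii) of Theorem~\ref{3-GDD}, namely $g\le 4(d-1)$, which amounts to $m\le 8d-4$ and so fails when $n_4$ is small relative to $m$ (equivalently $v$ large); in that regime I would instead keep most pairs as genuine size-$2$ groups and absorb the defect in one small corrective group, built by filling the groups of a uniform $3$-GDD as in the proof of Theorem~\ref{T:3GDD4s2s}. Second, the boundary values — small $n_4$ (where $d<3$), small $v$, and $m$ below the range of the GDD theorems — must be disposed of by hand or by tabulated small designs; these are also where the construction genuinely breaks down, and they are precisely the excluded triples $m\equiv 6,8,10\pmod{12}$ with $n_4=\lfloor\tfrac m4\rfloor$ (for which $v\le 1$ leaves no room for the required extra size-$2$ edges). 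Verifying that every \emph{non}-excluded boundary case can still be built is the most delicate part of the argument.
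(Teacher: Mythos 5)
Your integer case is sound: when $N\in\Z$ the divisibility condition forces conditions (i) and (ii) of Theorem~\ref{T:3GDD4s2s}, the $3$-GDD of type $4^{n_4}2^{v}$ exists, and it is itself the desired space. Your consistency checks (condition (v) of Theorem~\ref{3-GDD} being automatic for $d\equiv 0\pmod 3$, and the size-$2$ count demanded of the filling equalling $n_2$) are also correct. The problem is the non-integer case, which is where the real content of the lemma lies. Your oversized-group construction, a $3$-GDD of type $4^{d}g^{1}$ with $d=n_4-c$ and $g=m-4d$, is only available when condition (ii) of Theorem~\ref{3-GDD} holds, namely $g\le 4(d-1)$, i.e.\ roughly $n_4\ge \frac{m+4}{8}+c$. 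So the construction is unavailable on essentially the entire lower half of the range $0\le n_4\le\lfloor\frac{m}{4}\rfloor$; for instance $m\equiv 4\pmod 6$, $n_4=3$ and $m\ge 28$ gives a non-integer case with $g=m-12>4(d-1)=8$, and such pairs are not excluded by the hypotheses. Your fallback for this regime (``keep most pairs as genuine size-$2$ groups and absorb the defect in one small corrective group'') is a single unworked sentence: you would need to specify the GDD type (say $g'^{\,1}2^{w}$ with all four-edges inside the group of size $g'$), verify condition (v) of Theorem~\ref{3-GDD}, which is \emph{not} automatic when $h=2$, and then fill that group by an induction with its own base cases. As written, this half of the parameter space is simply not proved. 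A secondary issue: your induction in the $c\in\{1,2\}$ subcase silently requires that the filling instance $(g,c)$ avoids the lemma's own exclusions; this does in fact hold (the offending fillings $g\in\{6,8,10\}$ with $c=\lfloor\frac{g}{4}\rfloor$ arise only from outer pairs $(m,n_4)$ that are themselves excluded), but you assert rather than verify it, and the base cases of the induction are left open.

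For comparison, the paper sidesteps all of this by rounding $n_4$ \emph{up} rather than down: take the least $a\ge n_4$ with $a\equiv 1\pmod 3$ if $m\equiv 4\pmod 6$ and $a\equiv 0\pmod 3$ otherwise, check $a\le\lfloor\frac{m}{4}\rfloor$ (this is where the excluded cases and one special case $m\equiv 8\pmod{12}$, $n_4=\frac{m}{4}-1$ enter, the latter handled separately via a GDD of type $8^{1}4^{m/4-2}$ and Example~\ref{X:linSpace}), apply Theorem~\ref{T:3GDD4s2s} to get a $3$-GDD of type $4^{a}2^{m/2-2a}$, and then replace each of the $i=a-n_4\le 2$ surplus groups of size $4$ by a linear $\{2,3\}$-space on $4$ vertices (one triple plus three pairs), producing exactly $\frac{m}{2}-2n_4+i$ edges of size $2$. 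Because both group sizes are constant, Theorem~\ref{T:3GDD4s2s} carries no analogue of the constraint $g\le h(w-1)$, so no split according to the size of $n_4$ and no induction are needed. If you want to salvage your round-down approach you must supply a genuine construction for the small-$n_4$ regime; the round-up device makes that regime disappear.
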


\begin{proof}
A routine calculation reveals that a linear $\{2,3,4\}$-space with $n_4$ edges of size 4 and $\frac{m}{2}-2n_4+j$ edges of size 2 for some $j \in \{0,1,2\}$ will necessarily have the desired number of edges of size 3. We first deal with the special case  where $m\equiv 8 \mod{12}$ and $n_4=\frac{m}{4}-1$. In this case, the desired linear space can be obtained by taking a 3-$\GDD$ of type $8^1 4^{m/4-2}$ and replacing the group of size 8 with a $\{2,3,4\}$-linear space on 8 vertices with one edge of size 4 and four edges of size 2. The first of these ingredients is trivial if $m=8$ and exists by Theorem~\ref{3-GDD} otherwise, while the second exists by Example~\ref{X:linSpace}. The resulting linear space has four edges of size 2 and hence the required number of edges of size 3. So, in view of our hypotheses, we may now assume that $n_4 \leq \lfloor \frac{m}{4} \rfloor -1$ if $m \equiv 6, 10 \mod{12}$ and $n_4 \leq \lfloor \frac{m}{4} \rfloor -2$ if $m \equiv 8 \mod{12}$.

Let $a$ be the least integer such that $a \geq n_4$, $a \equiv 1 \mod{3}$ if $m \equiv 4 \mod{6}$ and $a \equiv 0 \mod{3}$ otherwise. It can be checked that $a \leq \lfloor \frac{m}{4} \rfloor$ by considering cases according to the value of $m$ modulo 12, deducing the corresponding values of $\lfloor \frac{m}{4} \rfloor$ modulo 3, and bearing in mind our assumed upper bounds on $n_4$. There is a 3-$\GDD$ with group type $4^{a} 2^{m/2-2a}$ by Theorem~\ref{T:3GDD4s2s}. Let $i=a-n_4$, note $i \in \{0,1,2\}$ and replace each of $i$ edges of size 4 in this 3-$\GDD$ with a linear $\{2,3\}$-space on 4 vertices with one edge of size 3. The resulting linear $\{2,3,4\}$-space has $n_4$ edges of size 4 and $\frac{m}{2}-2a+3i=\frac{m}{2}-2n_4+i$ edges of size 2, and hence has the desired number of edges of size 3.
\end{proof}

Using Lemma~\ref{L:evenMHypergraphs} we can now establish that $Z_{2,2}(m,n)=\lfloor U^0 \rfloor$ in all of the cases where Theorem~\ref{T:aboveSTS} specifies this and, further, in almost all of the cases where Theorem~\ref{T:belowSTS} specifies this.

\begin{lemma}\label{L:evenZValue}
We have $Z_{2,2}(m,n) = \lfloor U^0 \rfloor$ for all positive integers $m$ and $n$ such that $m$ is even, $\lfloor\frac{1}{3}\binom{m}{2}-\frac{m}{4}\rfloor+1 \leq n \leq \lfloor \frac{1}{3}\binom{m}{2}+\frac{m}{3} \rfloor$ and $n \neq \lceil \frac{1}{3}\binom{m}{2}-\frac{m}{4} \rceil$ if $m \equiv 6, 8, 10 \mod{12}$.
\end{lemma}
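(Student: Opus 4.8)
The plan is to combine the upper bound $Z_{2,2}(m,n)\le\lfloor U^0\rfloor$ from Lemma~\ref{L:basicUpperBounds} with a matching construction built from Lemma~\ref{L:evenMHypergraphs}. The starting point is a counting identity: if $H$ is any linear $\{2,3,4\}$-space on $m$ vertices with $n$ edges, and $n_2,n_3,n_4$ are the numbers of its edges of each size, then the linear-space condition $n_2+3n_3+6n_4=\tbinom{m}{2}$ and the edge count $n_2+n_3+n_4=n$ together force the total degree of $H$ to be $2n_2+3n_3+4n_4=U^+-\tfrac12 n_4$. Thus, for fixed $m$ and $n$, the total degree depends only on $n_4$. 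Accordingly I would set
\[n_4=2U^+-2\lfloor U^0\rfloor=\tbinom{m}{2}+3n-2\lfloor U^0\rfloor \qquad\text{and}\qquad n_3=\tfrac12\bigl(\tbinom{m}{2}-n-5n_4\bigr),\]
so that any linear $\{2,3,4\}$-space with exactly these numbers of size-$4$ and size-$3$ edges automatically has $n$ edges and total degree $U^+-\tfrac12 n_4=\lfloor U^0\rfloor$, as desired.

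It remains to realise these edge counts. Since $2\lfloor U^0\rfloor$ is even, the identity above forces $n_4\equiv\tbinom{m}{2}+n\pmod 2$, which makes $n_3$ an integer; that $n_3\ge 0$ follows from $n\le\lfloor\tfrac13\binom{m}{2}+\tfrac m3\rfloor$ together with $n_4\le\lfloor m/4\rfloor$ whenever the hypothesised range for $n$ is nonempty. I would then apply Lemma~\ref{L:evenMHypergraphs} with this value of $n_4$ to obtain a linear $\{2,3,4\}$-space with $n_4$ edges of size $4$ and $N_3:=\lfloor\tfrac{m(m-2)}{6}-\tfrac43 n_4\rfloor$ edges of size $3$. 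Writing $\theta=U^0-\lfloor U^0\rfloor\in[0,1)$, a substitution gives $n_3=\tfrac{m(m-6)}{14}+\tfrac{4n}{7}-5\theta$ and $\tfrac{m(m-2)}{6}-\tfrac43 n_4=\tfrac{m(m-6)}{14}+\tfrac{4n}{7}-\tfrac83\theta$, whence $N_3=\lfloor n_3+\tfrac73\theta\rfloor=n_3+\lfloor\tfrac73\theta\rfloor\ge n_3$. Consequently I can repeatedly replace a size-$3$ edge $\{a,b,c\}$ by the three size-$2$ edges $\{a,b\},\{a,c\},\{b,c\}$; each such replacement preserves the linear-space property and the value of $n_4$, raises the edge count by $2$, and lowers the size-$3$ count by $1$. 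Performing exactly $N_3-n_3$ of these replacements produces a linear $\{2,3,4\}$-space with $n_4$ edges of size $4$, $n_3$ edges of size $3$, a total of $n$ edges, and total degree $\lfloor U^0\rfloor$.

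The main obstacle is the bookkeeping needed to confirm that $n_4$ always lies in the admissible range for Lemma~\ref{L:evenMHypergraphs}, namely $0\le n_4\le\lfloor m/4\rfloor$ with $n_4\ne\lfloor m/4\rfloor$ when $m\equiv 6,8,10\pmod{12}$. Using $U^+-U^0=\tfrac{1}{28}(m(m+1)-6n)$ one computes $n_4=\tfrac{1}{14}(m(m+1)-6n)+2\theta$; this is close to $0$ at the top of the permitted range of $n$ (and nonnegative there since $n\le\tfrac{m(m+1)}{6}$) and close to $\lfloor m/4\rfloor$ near $n=\tfrac13\binom{m}{2}-\tfrac m4$ at the bottom. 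Pinning the two endpoints down exactly requires evaluating $\lfloor U^0\rfloor$ (equivalently $\theta$) through the residues of $m$ and $n$, so a case analysis modulo $12$ is unavoidable here. In particular, the hypothesis $n\ge\lfloor\tfrac13\binom{m}{2}-\tfrac m4\rfloor+1$ is exactly what guarantees $n_4\le\lfloor m/4\rfloor$, and the excluded value $n=\lceil\tfrac13\binom{m}{2}-\tfrac m4\rceil$ for $m\equiv6,8,10\pmod{12}$ is precisely the value of $n$ at which $n_4$ would equal the one forbidden parameter $\lfloor m/4\rfloor$. Once these endpoint computations are in place, every other step is routine.
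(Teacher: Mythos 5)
Your counting identity is correct, and your overall shape (upper bound from Lemma~\ref{L:basicUpperBounds} plus a construction via Lemma~\ref{L:evenMHypergraphs}) matches the paper's, but there is a genuine gap that breaks the proof in exactly the regime this lemma is needed for. By insisting that the extremal hypergraph be a linear \emph{space} (defect zero), you are forced to take $n_4=2U^+-2\lfloor U^0\rfloor=\tfrac{1}{14}\bigl(m(m+1)-6n\bigr)+2\theta$, and the term $2\theta$ (which your endpoint analysis ignores) can push $n_4$ up to nearly $2$ above $\lfloor m/4\rfloor$ near the bottom of the range of $n$. Concretely, take $m=12$ and $n=20$, which satisfies the hypotheses since $\lfloor\tfrac13\binom{12}{2}-3\rfloor+1=20$: here $U^0=\tfrac{432}{7}$, $\lfloor U^0\rfloor=61$, and $n_4=66+60-122=4>3=\lfloor m/4\rfloor$, so Lemma~\ref{L:evenMHypergraphs} does not apply. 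Similarly, for $m=18$ one gets $n_4=5>4$ at $n=48$ and $n_4=4=\lfloor m/4\rfloor$ at $n=49$, the latter being forbidden since $18\equiv 6\pmod{12}$. Hence your claim that the hypothesis $n\ge\lfloor\tfrac13\binom{m}{2}-\tfrac m4\rfloor+1$ ``is exactly what guarantees $n_4\le\lfloor m/4\rfloor$'' is false. Worse, the zero-defect Ansatz itself is unsalvageable, not just your appeal to Lemma~\ref{L:evenMHypergraphs}: for $m=4$, $n=2$ (also covered by the lemma, with $\lfloor U^0\rfloor=5$) your formula demands $n_4=2$, and in fact \emph{no} linear $\{2,3,4\}$-space on $4$ vertices with exactly $2$ edges exists at all --- the extremal hypergraph, e.g.\ $\{1,2,3\},\{1,4\}$, necessarily leaves two pairs uncovered.

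This is precisely where the paper's proof differs. For a linear $\{2,3,4\}$-hypergraph whose defect has $d$ edges, the identity generalises to $z=U^+-\tfrac d2-\tfrac12 n_4$, so $\lfloor U^0\rfloor$ is achieved whenever $d+n_4=2U^+-2\lfloor U^0\rfloor$; the paper exploits the freedom in $d$. It chooses $n_4$ equal to $n_4^*=\lfloor\tfrac1{14}m(m+1)-\tfrac37 n\rfloor$ (or $n_4^*+1$ in certain residue cases), verifies that \emph{this} value does lie in the admissible range for Lemma~\ref{L:evenMHypergraphs}, and then deletes surplus edges of size $2$ from the resulting linear space, so that the small positive defect absorbs the discrepancy that your construction tries to force into $n_4$. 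Your triangle-splitting move preserves defect zero and so can never reach these configurations. To repair your argument you would need to allow deletion of size-$2$ edges (i.e.\ positive defect) as the paper does, take the smaller value $n_4^*$ rather than $2U^+-2\lfloor U^0\rfloor$, and redo the endpoint case analysis for that choice.
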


\begin{proof}
In view of Lemma~\ref{L:basicUpperBounds}, it suffices to find a linear hypergraph with $m$ vertices, $n$ edges, and total degree $\lfloor U^0 \rfloor$. We claim there exists a linear $\{2,3,4\}$-hypergraph $H$ with $n_i$ edges of size $i$ for each $i \in \{2,3,4\}$, where $n_2$, $n_3$ and $n_4$ are defined as follows. Let $n^*_4=\lfloor\frac{1}{14}m(m+1) - \frac{3}{7}n  \rfloor$. We usually take $n_4=n_4^*$, except that we take $n_4=n_4^*+1$ in the special cases when
\begin{equation}\label{E:specialCaseCond}
\tfrac{1}{2}m(m-2)-4n^*_4 \equiv 2 \mod{3} \quad\text{ and }\quad \tfrac{1}{2}m(m+1) - 3n \equiv \text{4, 5 or 6} \mod{7}
\end{equation}
Let $n_3=\lfloor\frac{m(m-2)}{6}-\frac{4}{3}n_4\rfloor$ and $n_2=n-n_3-n_4$.

It will be convenient to further define $\epsilon_3$ and $\epsilon_4$ to be the rational numbers such that $n_3=\frac{m(m-2)}{6}-\frac{4}{3}n_4-\epsilon_3$ and $n_4=\frac{1}{14}m(m+1)-\frac{3}{7}n-\epsilon_4$. Bearing in mind our definition of $n_4$, we have the following.
\begin{itemize}[itemsep=0mm,parsep=0mm]
    \item[(i)]
$\epsilon_3 \in \{0,\frac{1}{3},\frac{2}{3}\}$.
    \item[(ii)]
If \eqref{E:specialCaseCond} holds, then $\epsilon_3 = \frac{2}{3}$ and $\epsilon_4 \in \{-\frac{3}{7},-\frac{2}{7},-\frac{1}{7}\}$.
    \item[(iii)]
If \eqref{E:specialCaseCond} does not hold, then $\epsilon_4 \in \{0,\frac{1}{7},\ldots,\frac{6}{7}\}$ and, further, $\epsilon_4 \in \{0,\frac{1}{7},\frac{2}{7},\frac{3}{7}\}$ if  $\epsilon_3 = \frac{2}{3}$.
\end{itemize}

The total degree of a hypergraph with $n_i$ edges of size $i$ for each $i \in \{2,3,4\}$, is
\begin{equation}\label{E:degCount}
2n_2+3n_3+4n_4=2n_4+n_3+2n = \tfrac{2}{3}n_4+\tfrac{m(m-2)}{6}-\epsilon_3+2n  = U^0-\tfrac{2}{3}\epsilon_4-\epsilon_3
\end{equation}
where the equalities follows using, in turn, the definitions of $n_2$, $\epsilon_3$ and $\epsilon_4$. Using (i), (ii) and (iii) it can be checked that the final expression in \eqref{E:degCount} is greater than $U^0-1$ and hence is at least $\lfloor U^0 \rfloor$ since the first expression in \eqref{E:degCount} is clearly an integer. Thus it will indeed suffice to show that a hypergraph $H$ with the specified properties exists.

We now show that $n_4$ satisfies the hypotheses of Lemma~\ref{L:evenMHypergraphs}. Let $n'$ be the rational number such that $n=\frac{1}{3}\binom{m}{2}-\frac{m}{4}+n'$ and note that, by our hypotheses $0 < n' \leq \frac{7m}{12}$. Using $n=\frac{1}{3}\binom{m}{2}-\frac{m}{4}+n'$ and the definition of $\epsilon_4$, we have that
\[n_4 = \tfrac{m}{4}-\tfrac{3n'}{7}-\epsilon_4.\]
In all cases this implies that $0 \leq n_4 \leq \lfloor\frac{m}{4}\rfloor$, bearing in mind that $n_4$ is an integer, that $0 < n' \leq \frac{7m}{12}$, that $m$ is even and that $-\frac{3}{7} \leq \epsilon_4 \leq \frac{6}{7}$ by (ii) and (iii). When $m \equiv 8 \mod{12}$ we have $n_4 \neq \frac{m}{4}$ since $n' \geq \frac{5}{3}$. When $m \equiv 6, 10 \mod{12}$, we have $n_4 \neq \frac{m-2}{4}$ because $n' = \frac{3}{2}+j$ for some nonnegative integer $j$ and, if $j = 0$, then $n^*_4=\lfloor\frac{m}{4}-\frac{9}{14}\rfloor=\frac{m-6}{4}$, so the first condition of \eqref{E:specialCaseCond} fails and $n=n^*_4$.

So by Lemma~\ref{L:evenMHypergraphs} there is a linear $\{2,3,4\}$-space with $n_i$ edges of size $i$ for each $i \in \{3,4\}$. We can obtain the required hypergraph $H$ from this space by (if needed) deleting edges of size 2 provided that $0 \leq n_2 \leq \binom{m}{2}-3n_3-6n_4$. So it suffices to prove these two inequalities.

To see that $n_2$ is nonnegative note that
\begin{equation}\label{E:n2Pos}n_2=n-n_3-n_4= %n+\tfrac{1}{6}m(m-2)+\tfrac{1}{3}n_4+\epsilon_3 =
\tfrac{6}{7}n -\tfrac{1}{14}m(2m-5)+\epsilon_3 - \tfrac{1}{3}\epsilon_4 > \epsilon_3-\tfrac{1}{3}\epsilon_4
\end{equation}
where second equality follows using, successively, the definitions of $\epsilon_3$ and $\epsilon_4$, and the inequality follows using $n>\frac{1}{3}\binom{m}{2}-\frac{m}{4}$ (which in turn follows from our hypotheses). Using (i), (ii) and (iii) it can be checked that the final expression in \eqref{E:n2Pos} is at least $-1$. Thus $n_2$ is nonnegative since it is an integer by definition. To see that $n_2 \leq \binom{m}{2}-3n_3-6n_4$, %or, equivalently, that $n+3n_3+5n_4 \leq \binom{m}{2}$,
note that
\begin{equation}\label{E:edgeCount}
n_2+3n_3+6n_4 = n+2n_3+5n_4=%n+\tfrac{1}{3}m(m-2)+\tfrac{7}{3}n_4-2\epsilon_3
\tbinom{m}{2}-2\epsilon_3-\tfrac{7}{3}\epsilon_4.
\end{equation}
where the equalities follow using, successively, the definitions of $n_2$, $\epsilon_3$ and $\epsilon_4$. Using (i), (ii) and (iii) it can be checked that the final expression in \eqref{E:edgeCount} is less than $\binom{m}{2}+1$ and hence is at most $\binom{m}{2}$ since the first expression in \eqref{E:edgeCount} is an integer. This completes the proof.
\end{proof}

\subsection*{Proof of Theorem~\ref{T:aboveSTS}}

We can now prove Theorem~\ref{T:aboveSTS} by combining our previous results.

\begin{proof}[\textup{\textbf{Proof of Theorem~\ref{T:aboveSTS}.}}]
Each pair $(m,n)$ of parameters covered by Theorem~\ref{T:aboveSTS} falls into exactly one of the cases given in the first column of Table~\ref{Tab:aboveSTS}. For each case, Theorem~\ref{T:aboveSTS} claims that the value of $Z_{2,2}(m,n)$ is as given in the second column, and the result listed in the third column establishes that this is indeed the case.
\end{proof}

\section{Values of \texorpdfstring{$\bm{n}$}{n} below the triple system threshold}\label{S:below}

In this section we will prove Theorem~\ref{T:belowSTS}. Table~\ref{Tab:belowSTS} details how we will divide the proof. It gives a division into cases based on the values of $m$ and $n$ and, for each case, the value of $Z_{2,2}(m,n)$ claimed by Theorem~\ref{T:belowSTS} and the result that will establish this value.

\begin{table}[H]
\begin{center}
\begin{tabular}{ll|l|l}
    \multicolumn{2}{c|}{case} & $Z_{2,2}(m,n)$ & result \\
    \hline\hline
    $m\equiv 1,3 \mod{6}$ & $n=\lfloor\frac{1}{3}\binom{m}{2}\rfloor-\{1,2,3,4\}$ &  $\lfloor U^- \rfloor-1$ & Lemma~\ref{L:not2Mod3ExceptionalZValues} \\
    $m\equiv 1,3 \mod{6}$ & $n \leq \lfloor\frac{1}{3}\binom{m}{2}\rfloor-5$ &  $\lfloor U^- \rfloor$ & Lemma~\ref{L:ColRosStiConsequence} \\
    $m\equiv 5 \mod{6}$ & $n=\lfloor\frac{1}{3}\binom{m}{2}\rfloor-\{0,1,3\}$ &  $\lfloor U^- \rfloor-1$ & Lemma~\ref{L:5mod6ZValue} \\
    $m\equiv 5 \mod{6}$ & $n=\lfloor\frac{1}{3}\binom{m}{2}\rfloor-\{2\} \cup \{4,5,\ldots,27\}$ &  $\lfloor U^- \rfloor$ & Lemma~\ref{L:5mod6ZValue} \\
    $m\equiv 5 \mod{6}$ & $n \leq \lfloor\frac{1}{3}\binom{m}{2}\rfloor-28$ &  $\lfloor U^- \rfloor$ & Lemma~\ref{T:CHMexact} \\ \hline
    $m$ even & $n > \lfloor\frac{1}{3}\binom{m}{2}-\frac{m}{4}\rfloor$ &  $\lfloor U^0 \rfloor$ & Lemmas~\ref{L:evenZValue},~\ref{L:U0Exceptional} \\
    $m \equiv 0,4 \mod{6}$ & $n \leq \lfloor\frac{1}{3}\binom{m}{2}-\frac{m}{4}\rfloor$ &  $\lfloor U^- \rfloor$ & Lemma~\ref{L:ColRosStiConsequence} \\
    $m \equiv 2 \mod{6}$ & $n=\lfloor\frac{1}{3}\binom{m}{2}-\frac{m}{4}\rfloor-\{0,\ldots,27\}$ &  $\lfloor U^- \rfloor$ & Lemma~\ref{L:2mod6ZValue} \\
    $m \equiv 2 \mod{6}$ & $n \leq \lfloor\frac{1}{3}\binom{m}{2}-\frac{m}{4}\rfloor-28$ &  $\lfloor U^- \rfloor$ & Lemma~\ref{T:CHMexact} \\
\end{tabular}
\caption{Division of the proof of Theorem~\ref{T:belowSTS}}\label{Tab:belowSTS}
\end{center}
\end{table}

Results of Glock et al. \cite{GloKuhLoMonOst} and Delcourt and Postle \cite{DelPos} establish that any graph of sufficiently large order $m$ with minimum degree at least $\frac{1}{14}(7+\sqrt{21})m\approx 0.82733m$ has a decomposition into triangles if and only if it satisfies the obvious necessary conditions. This can be stated as follows.

\begin{theorem}[\cite{DelPos,GloKuhLoMonOst}]\label{L:3HypergraphExistence}
There is a positive integer $m_0$ such that if $G$ is an even graph of order $m > m_0$, minimum degree at least $\frac{1}{14}(7+\sqrt{21})m$ and $|E(G)| \equiv 0\mod{3}$, then there is a linear $3$-hypergraph with underlying graph $G$.
\end{theorem}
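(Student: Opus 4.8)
The plan is to recognise this statement as a triangle-decomposition result and to reduce it to the theory built around the \emph{fractional} triangle decomposition threshold. First I would observe that a linear $3$-hypergraph whose underlying graph is $G$ is exactly a partition of $E(G)$ into triangles: each edge of size $3$ accounts for three pairs, linearity forces the triples to be pairwise edge-disjoint, and the requirement that the underlying graph equal $G$ forces the union of the triples to be precisely $E(G)$. So the task is to show that $G$ admits a $K_3$-decomposition. The two hypotheses are nothing but the obvious necessary ($K_3$-divisibility) conditions: each vertex must have its incident edges paired off inside triangles, forcing even degree, and each triangle consumes three edges, forcing $|E(G)| \equiv 0 \pmod{3}$. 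The content of the theorem is therefore that, above the stated minimum-degree threshold and for large $m$, these necessary conditions already suffice.

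The route I would follow is the iterative absorption framework. One would (i) set aside a small, robustly structured \emph{absorber} subgraph whose edges, together with any small divisibility-correct leftover, can always be rearranged into triangles; (ii) use a R\"odl-nibble or random greedy argument to produce an approximate triangle decomposition covering all but a tiny fraction of the remaining edges; and (iii) feed the few uncovered edges into the absorber. The mechanism that makes the precise minimum-degree bound emerge is the reduction of Barber, K\"uhn, Lo and Osthus, which bounds the $K_3$-decomposition threshold in terms of the \emph{fractional} $K_3$-decomposition threshold, namely the infimum of densities $\delta$ for which every graph of minimum degree at least $\delta m$ admits a nonnegative weighting of its triangles assigning total weight exactly $1$ to every edge. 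Modulo the divisibility conditions and lower-order terms, exact decomposability for large $m$ follows once the fractional threshold is controlled.

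The main obstacle, and the genuinely new input of the cited works, is establishing that the fractional triangle decomposition threshold is at most $\tfrac{1}{14}(7+\sqrt{21})$. I would attack this through linear-programming duality: by Farkas' lemma, a graph fails to have a fractional triangle decomposition only if there is an edge weighting that is triangle-nonpositive (every triangle has nonpositive total edge weight) yet has strictly positive total weight. One then rules this out by a local potential-function analysis in which the high minimum degree guarantees enough triangles through each edge to make such a weighting infeasible; the curious constant $\tfrac{1}{14}(7+\sqrt{21})$ is exactly the density at which the extremal weighting ceases to exist. Combining this fractional bound with the reduction theorem and the absorption machinery then yields an exact triangle decomposition for all sufficiently large $m$ meeting the divisibility conditions, which is the assertion of the theorem.
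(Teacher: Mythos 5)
Your proposal is correct and follows essentially the same route as the paper, which does not reprove this statement at all but presents it as a direct restatement of the cited triangle-decomposition threshold results: the translation you give (a linear $3$-hypergraph with underlying graph $G$ is exactly a partition of $E(G)$ into triangles, with the two hypotheses being the divisibility conditions) is precisely the paper's implicit reduction, and your sketch of iterative absorption, the reduction to the fractional $K_3$-decomposition threshold, and the Delcourt--Postle bound of $\tfrac{1}{14}(7+\sqrt{21})$ accurately reflects how the cited works \cite{GloKuhLoMonOst,DelPos} establish it. The only caveat is that this machinery lives entirely in those references, so your outline is a faithful summary of their argument rather than an independent proof --- which is exactly the status the statement has in the paper.
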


In fact, we do not require the full strength of Theorem~\ref{L:3HypergraphExistence}: $\frac{1}{14}(7+\sqrt{21})$ could be replaced with any constant less than 1 and the result would still suffice for our purposes.

\subsection*{Achieving $\bm{\lfloor U^0 \rfloor}$}

Recall that $U^{0} = \tfrac{1}{14}m(3m-4)+\tfrac{12}{7}n$. Lemma~\ref{L:evenZValue} covers the vast majority of the cases for which Theorem~\ref{T:belowSTS} claims $Z_{2,2}(m,n)=\lfloor U^0 \rfloor$. In the following lemma we deal with the remaining cases.

\begin{lemma}\label{L:U0Exceptional}
For each sufficiently large integer $m$ such that $m \equiv 6,8,10 \pmod{12}$ and for $n= \lceil \frac{1}{3}\binom{m}{2}-\frac{m}{4} \rceil$ we have $Z_{2,2}(m,n)=\lfloor U^0 \rfloor$.
\end{lemma}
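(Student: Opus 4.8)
The upper bound $Z_{2,2}(m,n)\le\lfloor U^0\rfloor$ is immediate from Lemma~\ref{L:basicUpperBounds}, so the whole task is to exhibit a linear hypergraph on $m$ vertices with $n$ edges and total degree $\lfloor U^0\rfloor$. These are exactly the pairs $(m,n)$ excluded from Lemma~\ref{L:evenZValue}: since $\tfrac13\binom m2-\tfrac m4$ is non-integral when $m\equiv 6,8,10\pmod{12}$, we have $n=\lceil\tfrac13\binom m2-\tfrac m4\rceil=\lfloor\tfrac13\binom m2-\tfrac m4\rfloor+1$, and at this value the argument of Lemma~\ref{L:evenZValue} is forced to take $n_4=\lfloor m/4\rfloor$, which is precisely the number of size-$4$ edges that Lemma~\ref{L:evenMHypergraphs} (and hence Theorem~\ref{T:3GDD4s2s}) cannot supply. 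The plan is therefore to keep the counting of Lemma~\ref{L:evenZValue} intact but to replace its appeal to group divisible designs by a direct construction built on the triangle-decomposition result Theorem~\ref{L:3HypergraphExistence}, which is why the hypothesis that $m$ is large appears.

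First I would fix the edge-size distribution. Setting $n_4=\lfloor m/4\rfloor$, $n_3=\lfloor\tfrac{m(m-2)}{6}-\tfrac{4}{3}n_4\rfloor$ and $n_2=n-n_3-n_4$, a short computation of the same kind as the $\epsilon_3,\epsilon_4$ bookkeeping in Lemma~\ref{L:evenZValue} shows that $n_2=1$ in every case, that the total degree $2n_2+3n_3+4n_4$ equals $\lfloor U^0\rfloor$, and that the number $d=\binom m2-n_2-3n_3-6n_4$ of uncovered pairs equals $0$ when $m\equiv 8$ and $2$ when $m\equiv 6,10\pmod{12}$. It then remains to realise a linear hypergraph with one edge of size $2$, $n_4$ edges of size $4$, $n_3$ edges of size $3$, and a defect with $d$ edges.

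For the construction I would take the $n_4$ edges of size $4$ to be edge-disjoint copies of $K_4$ placed in $K_m$, almost all vertex-disjoint, together with the single size-$2$ edge and the $d$ defect pairs. Writing $G$ for the graph whose edges are the pairs used by none of these, the size-$3$ edges must form a triangle decomposition of $G$; by Lemma~\ref{L:parity} this requires $G$ to be an even graph, while $|E(G)|=3n_3\equiv0\pmod 3$ holds automatically. Because only $O(m)$ pairs are removed from $K_m$, $G$ has minimum degree $m-O(1)>\tfrac1{14}(7+\sqrt{21})m$ for large $m$, so once $G$ is even Theorem~\ref{L:3HypergraphExistence} produces the triangles and finishes the hypergraph.

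The one genuinely delicate point, which I expect to be the main obstacle, is arranging the $K_4$'s, the size-$2$ edge and the defect so that $G$ is even. Since $m$ is even, every vertex must lie in an odd number of the removed structures, where a $K_4$ counts $3$ and each size-$2$ or defect edge counts $1$ at an incident vertex; with all $K_4$'s vertex-disjoint this fails at a bounded set of vertices. The fix is to perturb the packing at $O(1)$ vertices. For $m\equiv 8\pmod{12}$, where $d=0$ and disjoint $K_4$'s would cover all of $V$, I would route one vertex through two $K_4$'s, leave one vertex in none, and join the two by the size-$2$ edge. For $m\equiv 6,10\pmod{12}$, where two vertices are left uncovered and $d=2$, I would place the size-$2$ edge and the two defect pairs as a path of length $3$ joining the two uncovered vertices through two $K_4$-vertices in distinct copies. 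A direct parity check then gives every vertex odd removed-degree, so $G$ is even, and a divisibility check (using $m\equiv 8,6,10\pmod{12}$) confirms the remaining vertices partition into the leftover $K_4$'s. Checking that these local adjustments are simultaneously consistent with edge-disjointness, with linearity (for instance the endpoints of the size-$2$ edge must lie in different $K_4$'s), and with the exact counts $n_2,n_3,n_4,d$ is where the bulk of the bookkeeping lies.
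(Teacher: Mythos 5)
Your proposal is correct and follows essentially the same route as the paper: the same edge-size distribution ($n_2=1$, $n_4=\lfloor m/4\rfloor$, $n_3=\lfloor\tfrac{m(m-2)}{6}-\tfrac43 n_4\rfloor$), a $K_4$-packing together with one size-$2$ edge (and, for $m\equiv 6,10\pmod{12}$, two leftover pairs) arranged so that the uncovered graph is even with edge count divisible by $3$, and an appeal to Theorem~\ref{L:3HypergraphExistence} to fill it with triangles. The only difference is cosmetic: the paper specifies the $\{2,4\}$-hypergraph by an explicit figure and deletes two size-$2$ edges at the end, whereas you place the two defect pairs directly via your path perturbation, and your parity and counting checks match the paper's.
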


\begin{proof}
In view of Lemma~\ref{L:basicUpperBounds}, it suffices to find a linear hypergraph with $m$ vertices, $n$ edges, and total degree $\lfloor U^0 \rfloor$. We will show there exists a linear $\{2,3,4\}$-hypergraph $H$ on $m$ vertices with $|E(H_2)|$, $|E(H_3)|$ and $|E(H_4)|$ as specified in the following table. In each case simple calculations show that such an $H$ would have $n$ edges and total degree $\lfloor U^0 \rfloor = \lfloor\tfrac{1}{14}m(3m-4)+\tfrac{12}{7}n\rfloor$, as required to complete the proof.

\begin{center}
\begin{tabular}{ll||c|c|c}
  \multicolumn{2}{c||}{case} & \rule{4mm}{0mm}$|E(H_2)|$\rule{4mm}{0mm} & \rule{4mm}{0mm}$|E(H_3)|$\rule{4mm}{0mm} & \rule{4mm}{0mm}$|E(H_4)|$\rule{4mm}{0mm} \\ %& \rule{4mm}{0mm}$|E(D)|$\rule{4mm}{0mm} \\
  \hline
  $m \equiv 6, 10 \pmod{12}$ & $n=\frac{1}{3}\binom{m}{2}-\frac{m}{4}+\frac{1}{2}$ & $1$ & $\frac{1}{3}\binom{m}{2}-\frac{m}{2}$ & $\frac{m-2}{4}$ \\ %& $2$ \\
  $m \equiv 8 \pmod{12}$ & $n=\frac{1}{3}\binom{m}{2}-\frac{m}{4}+\frac{2}{3}$ & $1$ & $\frac{1}{3}\binom{m}{2}-\frac{m}{2}-\frac{1}{3}$ & $\frac{m}{4}$ \\ %& $0$ \\
  %$m \equiv 10 \pmod{12}$ & $n=\frac{1}{3}\binom{m}{2}-\frac{m-2}{4}$ & $1$ & $\frac{1}{3}\binom{m}{2}-\frac{m}{2}$ & $\frac{m-2}{4}$ \\ %& $2$ \\
\end{tabular}
\end{center}

To see that such a hypergraph exists consider the appropriate linear $\{2,4\}$-hypergraph $H'$ on $m$ vertices depicted in Figure~\ref{F:U0Exceptional}. It can be seen that the defect $D'$ of this hypergraph is an even graph with $|E(D')| \equiv 0 \mod {3}$ and each vertex having degree at least $m-8$. Thus, by Theorem~\ref{L:3HypergraphExistence}, there is a linear $3$-hypergraph $H''$ whose underlying graph is $D'$ provided that $m$ is sufficiently large.
%Furthermore, a simple calculation show that $|E(H'')|$ is the number given in the third column of the table.
Let $H$ be the linear hypergraph formed by taking the union of $H'$ and $H''$ and, if $m \equiv 6, 10 \pmod{12}$, deleting two edges of size 2. It can be checked that $|E(H_2)|$, $|E(H_3)|$ and $|E(H_4)|$ are as specified in the table.
\end{proof}

\begin{figure}[H]
\begin{center}
\includegraphics[width=\textwidth]{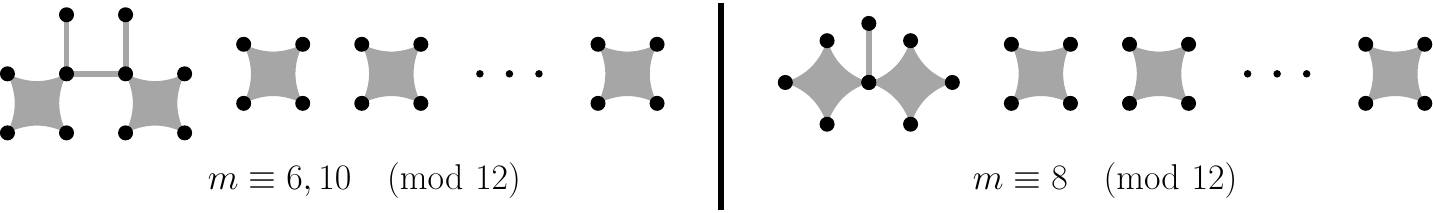}
\caption{The linear $\{2,4\}$-hypergraph $H'$ on $m$ vertices in the proof of Lemma~\ref{L:U0Exceptional}. Here and in subsequent figures, we depict edges as shaded regions that touch the vertices they are incident with.}\label{F:U0Exceptional}
\end{center}
\end{figure}

%\comment{The missing things from the above can be done for large $m$ as follows. (I'm describing the $\{2,4\}$-hypergraph.)
%\begin{itemize}
%    \item
%If $m \equiv 6,10 \mod{12}$ then each vertex has degree 1 except for two. These two are in a 2-edge together and are each incident to one other 2 edge and one 4-edge. (Discard two 2-edges at the end.)
%    \item
%If $m \equiv 8 \mod{12}$ then each vertex has degree 1 except for one. This one is in 2-edge together and two 4-edges.
%\end{itemize}}

\subsection*{Achieving $\bm{\lfloor U^- \rfloor}$ or $\bm{\lfloor U^- \rfloor-1}$ when $\bm{m \not\equiv 2 \mod{3}}$}

Recall that $U^-=\tfrac{1}{6}m(m-1)+2n$. In cases where $m \not\equiv 2 \mod{3}$ we use a result due to Colbourn, Rosa and Stinson \cite{ColRosSti} on the existence of linear $\{3,4\}$-spaces. We state a slightly weaker version of it that will be sufficient for our purposes.

\begin{theorem}[\cite{ColRosSti}]\label{T:ColRosSti}
Let $m$ and $n_4$ be nonnegative integers such that $m \geq 96$, $m \not\equiv 2 \mod {3}$ and $n_4 \leq \lfloor \frac{1}{6}\binom{m}{2} - \frac{m}{6} \rfloor$. There exists a linear $\{3,4\}$-space on $m$ vertices with $n_4$ edges of size $4$ provided that $n_4 \geq 5$ if $m$ is odd and $n_4 \geq \lceil \frac{m}{4} \rceil$ if $m$ is even.
\end{theorem}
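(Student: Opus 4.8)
The plan is to regard a linear $\{3,4\}$-space on $m$ vertices with $n_4$ edges of size $4$ as a decomposition of $K_m$ into $n_4$ edge-disjoint copies of $K_4$ together with copies of $K_3$, and to build such a decomposition directly. First I would record the necessary conditions that pin down the hypotheses. Counting edges gives $3n_3 + 6 n_4 = \binom{m}{2}$, so we need $\binom{m}{2} \equiv 0 \mod{3}$, which is exactly what $m \not\equiv 2 \mod{3}$ guarantees. At each vertex $v$ the degree equation $2a_v + 3b_v = m-1$, where $a_v$ and $b_v$ count the triangles and $K_4$'s through $v$, forces $b_v$ to have the opposite parity to $m$; when $m$ is even this makes every $b_v$ odd and hence positive, so $4 n_4 = \sum_v b_v \geq m$ and $n_4 \geq \lceil \frac{m}{4}\rceil$. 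When $m$ is odd every $b_v$ is even, and a short argument shows that a nonempty family of $K_4$'s in which every vertex lies in an even number of them must contain at least $5$ members (the values $n_4 \in \{1,2,3,4\}$ are ruled out because the $K_4$'s would have to pairwise meet in a point in an impossible way, much as in Lemma~\ref{L:strongUpperBounds}), which explains the hypothesis $n_4 \geq 5$. The stated upper bound on $n_4$ is a packing-type bound ensuring that enough edges remain to form an admissible triangle residue.

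For the construction I would split on the size of $n_4$. When $n_4$ is not too large — at most a small constant fraction of $\binom{m}{2}$ — the residual graph obtained by deleting the $K_4$'s is dense, and Theorem~\ref{L:3HypergraphExistence} does the work. The idea, exactly as in the proof of Lemma~\ref{L:U0Exceptional}, is to first place $n_4$ edge-disjoint copies of $K_4$ inside $K_m$ so that the residual graph $G = K_m - (\text{the }K_4\text{'s})$ is even, has $|E(G)| \equiv 0 \mod{3}$, and has minimum degree above the threshold of Theorem~\ref{L:3HypergraphExistence}; then $G$ decomposes into triangles and we are done. Arranging this $K_4$-packing so that each vertex lies in the correct number of $K_4$'s, with the parity dictated above, while keeping every vertex's $K_4$-degree comfortably below $m$, is a matter of spreading the $K_4$'s evenly, for instance by drawing them from a few parallel classes of a resolvable $K_4$-design or from the groups of a suitable $3$-GDD supplied by Theorems~\ref{3-GDD-uni},~\ref{3-GDD} and~\ref{T:3GDD4s2s}.

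The difficult regime, and the main obstacle, is large $n_4$, where the residue is too sparse for Theorem~\ref{L:3HypergraphExistence} to apply and one must instead produce the $K_4$'s by design. Here I would argue recursively: take a $3$-GDD of a type whose groups have admissible sizes (such as $4$, $7$ or $13$), fill each group with a linear $\{3,4\}$-space supplied by the inductive hypothesis, and note that the GDD blocks are all triangles, so the total number of size-$4$ edges is just the sum of the numbers contributed by the group fillings. Varying the fillings over their own $n_4$-spectra then yields an interval of achievable values for the whole design. The crux is to show that these intervals, together with the output of the dense regime, cover the entire feasible range of $n_4$ with no gaps and reach both endpoints. This needs a stock of small ingredient designs whose $K_4$-spectra are completely determined (to seed the induction and the fillings) together with a local trade that adjusts $n_4$ by a small controlled amount while respecting the parity obstruction — for odd $m$ the smallest such trade would swap a cluster of $5$ suitably arranged $K_4$'s for triangles on the same edge set. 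Pinning down these base cases and trades and verifying that the overlaps leave no gaps for all $m \geq 96$ is where essentially all of the work lies; the bound $m \geq 96$ and the precise thresholds on $n_4$ are exactly what is required to make the recursion bottom out cleanly and to sidestep the sporadic small non-existent cases.
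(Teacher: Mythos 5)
First, a point of comparison: the paper does not prove Theorem~\ref{T:ColRosSti} at all. It is quoted (in deliberately weakened form) from Colbourn, Rosa and Stinson \cite{ColRosSti}, and is used as an external ingredient in the proof of Lemma~\ref{L:ColRosStiConsequence}. So there is no internal proof to measure your attempt against; the question is whether your proposal would stand as a self-contained proof of the cited result, and it would not.

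Your derivation of the necessary conditions is correct and is in the same spirit as the paper's Lemma~\ref{L:evenExistence}: from $2a_v+3b_v=m-1$ the parity of each $b_v$ is forced, giving $n_4 \geq \lceil m/4 \rceil$ for even $m$, and for odd $m$ the union of the $K_4$'s is an even graph decomposed into edge-disjoint $K_4$'s, so at least five are needed. But the existence argument has two genuine gaps. (a) Your dense regime leans on Theorem~\ref{L:3HypergraphExistence}, which holds only for $m>m_0$ with $m_0$ an unspecified (and in all known proofs enormous) constant; it therefore cannot deliver the explicit threshold $m \geq 96$ in the statement. At best you would prove a ``sufficiently large $m$'' version, which is strictly weaker than what the paper cites and uses (note also that \cite{ColRosSti} dates from 1991, long before the decomposition-threshold theorems of \cite{GloKuhLoMonOst,DelPos} existed, so the actual proof is purely design-theoretic). (b) The large-$n_4$ regime, which you yourself identify as ``where essentially all of the work lies,'' is only a strategy outline: the recursive GDD filling, the stock of small ingredient designs with fully determined $K_4$-spectra, the trades respecting the parity obstruction, and the verification that the resulting intervals of achievable $n_4$ values cover the whole range $[\,5 \text{ or } \lceil m/4\rceil,\ \lfloor \tfrac{1}{6}\binom{m}{2}-\tfrac{m}{6}\rfloor\,]$ without gaps are precisely the content of the theorem, and none of it is carried out. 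As it stands the proposal is a plausible road map, not a proof.
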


We can use this result to settle the cases of Theorem~\ref{T:belowSTS} where $m \not\equiv 2 \mod{3}$ and $Z_{2,2}(m,n)=\lfloor U^- \rfloor$.

\begin{lemma}\label{L:ColRosStiConsequence}
We have $Z_{2,2}(m,n)=\lfloor U^- \rfloor$ for all nonnegative integers $m$ and $n$ such that $m \geq 96$, $m \not\equiv 2 \mod {3}$, and
\[\tfrac{1}{6}\tbinom{m}{2}+\tfrac{m}{3}+40 \leq n \leq
\left\{
  \begin{array}{ll}
    \left\lfloor\tfrac{1}{3}\binom{m}{2}-\frac{m}{4}\right\rfloor & \hbox{if $m$ is even} \\[1mm]
    \left\lfloor\tfrac{1}{3}\binom{m}{2}\right\rfloor-5 & \hbox{if $m$ is odd.}
  \end{array}
\right.\]
\end{lemma}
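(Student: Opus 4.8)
The plan is to exploit the hypothesis $m \not\equiv 2 \mod{3}$, which forces $\frac{1}{3}\binom{m}{2} = \frac{1}{6}m(m-1)$ to be an integer and hence makes $U^-$ an integer, so that $\lfloor U^- \rfloor = U^-$. By Lemma~\ref{L:basicUpperBounds} we already have $Z_{2,2}(m,n) \leq \lfloor U^- \rfloor$, so it suffices to construct a linear hypergraph with $m$ vertices, $n$ edges and total degree exactly $U^-$. I would produce this with a single linear $\{3,4\}$-space supplied by Theorem~\ref{T:ColRosSti}, with no edges of size $2$ and no defect needed.

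The key observation is that a linear $\{3,4\}$-space is a linear space, so its underlying graph is $K_m$ and its numbers of triples $n_3$ and quadruples $n_4$ satisfy $3n_3 + 6n_4 = \binom{m}{2}$ by counting pairs. Setting $n_4 = \tfrac{1}{3}\binom{m}{2} - n$ then forces $n_3 + n_4 = n$ edges and total degree $3n_3 + 4n_4 = \tfrac{1}{3}\binom{m}{2} + 2n = U^-$, exactly as wanted. Thus the whole construction reduces to exhibiting a linear $\{3,4\}$-space on $m$ vertices with precisely $n_4 = \tfrac{1}{3}\binom{m}{2} - n$ edges of size $4$, a nonnegative integer by the stated range of $n$.

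It then remains to verify that this value of $n_4$ meets the hypotheses of Theorem~\ref{T:ColRosSti}, which is where the bounds on $n$ are used. The lower bound $n \geq \tfrac{1}{6}\binom{m}{2} + \tfrac{m}{3} + 40$ translates into $n_4 \leq \tfrac{1}{6}\binom{m}{2} - \tfrac{m}{3} - 40$, which sits comfortably below $\lfloor\tfrac{1}{6}\binom{m}{2} - \tfrac{m}{6}\rfloor$. The upper bound on $n$ supplies the needed lower bound on $n_4$: when $m$ is odd, $n \leq \lfloor\tfrac{1}{3}\binom{m}{2}\rfloor - 5 = \tfrac{1}{3}\binom{m}{2} - 5$ gives $n_4 \geq 5$; when $m$ is even, using the integrality of $\tfrac{1}{3}\binom{m}{2}$ to rewrite $\lfloor\tfrac{1}{3}\binom{m}{2} - \tfrac{m}{4}\rfloor = \tfrac{1}{3}\binom{m}{2} - \lceil\tfrac{m}{4}\rceil$, the bound $n \leq \lfloor\tfrac{1}{3}\binom{m}{2} - \tfrac{m}{4}\rfloor$ gives $n_4 \geq \lceil\tfrac{m}{4}\rceil$. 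With $m \geq 96$ and $m \not\equiv 2 \mod{3}$, Theorem~\ref{T:ColRosSti} then yields the required space, completing the proof.

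I do not anticipate a serious obstacle: the construction is a direct application of Theorem~\ref{T:ColRosSti}, and the only care required is the bookkeeping with floors and ceilings and the confirmation that $U^-$ and $\tfrac{1}{3}\binom{m}{2}$ are integers. This integrality, which is precisely what the hypothesis $m \not\equiv 2 \mod{3}$ guarantees, is doing double duty — it makes the target total degree an integer and it makes the translation of the bounds on $n$ into the bounds on $n_4$ exact rather than merely approximate.
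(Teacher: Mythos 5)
Your proposal is correct and follows essentially the same route as the paper: both define $n_4=\tfrac{1}{3}\binom{m}{2}-n$, invoke Theorem~\ref{T:ColRosSti} to obtain a linear $\{3,4\}$-space with exactly $n_4$ quadruples, and check via the integrality granted by $m\not\equiv 2 \pmod{3}$ that the bounds on $n$ translate into the hypotheses $n_4\geq 5$ (odd $m$), $n_4\geq\lceil m/4\rceil$ (even $m$) and $n_4\leq\lfloor\tfrac{1}{6}\binom{m}{2}-\tfrac{m}{6}\rfloor$. Your verification of these hypotheses is in fact spelled out more explicitly than in the paper, which simply asserts that the bounds on $n$ imply them.
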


\begin{proof}
Note that $\tfrac{1}{3}\binom{m}{2}$ is an integer since $m \not\equiv 2 \mod {3}$. In view of Lemma~\ref{L:basicUpperBounds}, it suffices to find a linear hypergraph with $m$ vertices, $n$ edges, and total degree $\lfloor U^- \rfloor$. Let $n_4=\frac{1}{3}\binom{m}{2}-n$ and $n_3=n-n_4$. We will use Theorem~\ref{T:ColRosSti} to construct a linear $\{3,4\}$-hypergraph $H$ with $n_4$ edges of size $4$ and $n_3$ edges of size 3. This will suffice to complete the proof because $n_3+n_4=n$ and $4n_4+3n_3=3n+n_4=\lfloor U^- \rfloor$.

Note that $n_3$ is nonnegative by our hypothesised lower bound on $n$. Furthermore, our bounds on $n$ imply that $n_4$ satisfies the hypotheses of Theorem~\ref{T:ColRosSti} and hence there is a linear $\{3,4\}$-space $H$ on $m$ vertices with $|E(H_4)|=n_4$ and hence with $|E(H_3)|=\frac{1}{3}\binom{m}{2}-2n_4$. Using the definitions of $n_3$ and $n_4$ we have that $\frac{1}{3}\binom{m}{2}-2n_4=n-n_4=n_3$ as required.
\end{proof}

The following lemma will deal with the cases of Theorem~\ref{T:belowSTS} where $m \not\equiv 2 \mod{3}$ and $Z_{2,2}(m,n)=\lfloor U^- \rfloor-1$.

\begin{lemma}\label{L:not2Mod3ExceptionalZValues}
For each sufficiently large integer $m$ such that $m \equiv 1,3 \pmod{6}$ and for $n=\frac{1}{3}\binom{m}{2}-r$ where $r \in \{1,2,3,4\}$, we have $Z_{2,2}(m,n) = \lfloor U^-\rfloor-1$.
\end{lemma}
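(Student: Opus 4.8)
The plan is to pair the upper bound already proved in Lemma~\ref{L:strongUpperBounds} with an explicit construction. Since $m \equiv 1,3 \pmod 6$, we have $\binom{m}{2} \equiv 0 \pmod 3$ and $m$ odd, so $U^-=\frac13\binom{m}{2}+2n$ is an integer and, using $n=\frac13\binom{m}{2}-r$, $\lfloor U^-\rfloor-1 = U^--1 = \binom{m}{2}-2r-1$. The inequality $Z_{2,2}(m,n)\leq \lfloor U^-\rfloor-1$ is exactly Lemma~\ref{L:strongUpperBounds} with $x=r\in\{1,2,3,4\}$, so it suffices to build a linear hypergraph on $m$ vertices with $n$ edges and total degree $\binom{m}{2}-2r-1$. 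Solving the analogue of equations \eqref{E:con1} and \eqref{E:con2} shows that a linear $\{2,3,4\}$-space with $n_2=3$ edges of size $2$, $n_4=r+2$ edges of size $4$ and $n_3=\frac13\binom{m}{2}-2r-5$ edges of size $3$ has exactly $n$ edges and total degree $U^--1$; this is the hypergraph I would construct, and it explains the role of the three ``extra'' edges of size $2$, which each contribute $c_2=\frac13$ to the deficiency in \eqref{E:weightedSumDef}.

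Following the seed-plus-triangles strategy used in Lemma~\ref{L:U0Exceptional}, I would first lay down the small edges and then fill the rest with triangles. Concretely, I would construct a linear $\{2,4\}$-hypergraph $H'$ on $m$ vertices with exactly $3$ edges of size $2$ and $r+2$ edges of size $4$ in which every vertex lies in an even number of edges. The even-degree condition is the crux: it guarantees that the defect $D'$ of $H'$ is an even graph, since each vertex $v$ satisfies $\deg_{D'}(v)=(m-1)-\sum_{E\ni v}(|E|-1)$, where $m-1$ is even and $\sum_{E\ni v}(|E|-1)$ is congruent modulo $2$ to the number of edges of $H'$ through $v$ (as $|E|-1$ is odd for $|E|\in\{2,4\}$). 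Because $H'$ covers only $6r+15$ pairs and $\binom{m}{2}\equiv 0 \pmod 3$, we also get $|E(D')|=\binom{m}{2}-(6r+15)\equiv 0 \pmod 3$; and as each vertex lies in at most two edges of $H'$ and so covers at most six pairs, every vertex has $\deg_{D'}(v)\geq m-7$.

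To produce $H'$ I would pass to the dual: a linear $\{2,4\}$-hypergraph in which every non-isolated vertex has degree exactly $2$ is precisely the incidence dual of a simple graph $M$, where the points of $H'$ are the edges of $M$, the edges of $H'$ are the vertices of $M$, and an edge of $H'$ of size $k$ corresponds to a vertex of $M$ of degree $k$. Here $M$ simple is equivalent to $H'$ being linear (two edges of $H'$ share at most one point, and two points lie in at most one common edge, exactly when $M$ has no repeated edge). Thus $H'$ exists as soon as there is a simple graph $M$ with degree sequence $4^{r+2}2^3$, and a routine check (for example via the Erd\H{o}s--Gallai conditions) shows this sequence is graphical for each $r\in\{1,2,3,4\}$; adding $m-(2r+7)$ isolated vertices then gives $H'$ on exactly $m$ vertices.

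Finally, since $m$ is large we have $m-7\geq \frac{1}{14}(7+\sqrt{21})m$, so $D'$ meets the hypotheses of Theorem~\ref{L:3HypergraphExistence} and there is a linear $3$-hypergraph $H''$ whose underlying graph is $D'$. Taking $H=H'\cup H''$ yields a linear space: every pair is covered exactly once, either by $H'$ or by $H''$. This $H$ has $3$ edges of size $2$, $r+2$ edges of size $4$ and $|E(D')|/3=\frac13\binom{m}{2}-2r-5$ edges of size $3$, hence the desired $n$ edges and total degree $U^--1$, completing the proof. The only real work is the construction and bookkeeping for $H'$ (the even-degree property and the finite check that the graphs $M$ exist); I expect this gadget to be the main obstacle, while everything after it is a direct application of Theorem~\ref{L:3HypergraphExistence}, whose hypotheses are comfortably satisfied for large $m$.
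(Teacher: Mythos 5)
Your proposal is correct and follows essentially the same route as the paper: a small linear $\{2,4\}$-hypergraph gadget in which every vertex has even degree (so the defect $D'$ is an even graph with $|E(D')|\equiv 0 \pmod{3}$ and minimum degree at least $m-7$), filled in with triangles via Theorem~\ref{L:3HypergraphExistence}, played against the upper bound of Lemma~\ref{L:strongUpperBounds}. The differences are only organizational: the paper exhibits its gadget explicitly (Figure~\ref{F:UMinusExceptional}), deletes some size-2 edges at the end (so its final hypergraphs have small nonzero defect) and treats $r=1$ separately as a Steiner triple system minus one edge, whereas your uniform gadget with degree sequence $4^{r+2}2^{3}$ (justified by a graphicality check) produces a genuine linear space for every $r\in\{1,2,3,4\}$ --- both constructions achieve the required total degree $\binom{m}{2}-2r-1$.
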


\begin{proof}
In view of Lemma~\ref{L:strongUpperBounds}, it suffices to find a linear hypergraph with $m$ vertices, $n$ edges, and total degree $\lfloor U^- \rfloor-1$. We will show there exists a linear $\{2,3,4\}$-hypergraph $H$ on $m$ vertices with $|E(H_2)|$, $|E(H_3)|$ and $|E(H_4)|$ as specified in the following table. In each case simple calculations show that such an $H$ will have $n=\frac{1}{3}\binom{m}{2}-r$ edges and total degree $\lfloor U^-\rfloor-1 = \binom{m}{2} - 2r -1$, as required to complete the proof.

\begin{center}
\begin{tabular}{c||c|c|c}
  \rule{4mm}{0mm}$r$\rule{4mm}{0mm} & \rule{4mm}{0mm}$|E(H_2)|$\rule{4mm}{0mm} & \rule{4mm}{0mm}$|E(H_3)|$\rule{4mm}{0mm} & \rule{4mm}{0mm}$|E(H_4)|$\rule{4mm}{0mm} \\ %& \rule{4mm}{0mm}$|E(D)|$\rule{4mm}{0mm} \\
  \hline
  1 & $0$ & $\frac{1}{3}\binom{m}{2}-1$ & $0$  \\ %& $3$ \\
  2 & $1$ & $\frac{1}{3}\binom{m}{2}-5$ & $2$  \\ %& $2$ \\
  3 & $0$ & $\frac{1}{3}\binom{m}{2}-5$ & $2$  \\ %& $3$ \\
  4 & $0$ & $\frac{1}{3}\binom{m}{2}-7$ & $3$  %& $3$
\end{tabular}
\end{center}

When $r=1$, by Theorem~\ref{l:23-hypergraph} a linear 3-space with $m$ vertices and $\frac{1}{3}\binom{m}{2}$ edges exists and a linear hypergraph $H$ with  the properties specified by the table can be obtained from this space by deleting an edge.
When $r \in \{2,3,4\}$ consider the appropriate linear $\{2,4\}$-hypergraph $H'$ on $m$ vertices depicted in Figure~\ref{F:UMinusExceptional}. It can be seen that the defect $D'$ of this hypergraph is an even graph with $|E(D')| \equiv 0 \mod {3}$ and each vertex having degree at least $m-7$. Thus, by Theorem~\ref{L:3HypergraphExistence}, there is a linear $3$-hypergraph $H''$ whose underlying graph is $D'$ provided that $m$ is sufficiently large.
%Furthermore, a simple calculation show that $|E(H'')|$ is the number given in the third column of the table.
Let $H$ be the linear hypergraph formed by taking the union of $H'$ and $H''$ and deleting $j$ edges of size 2, where $j=2$ if $r=2$ and $j=3$ if $r \in \{3,4\}$. It can be checked that $|E(H_2)|$, $|E(H_3)|$ and $|E(H_4)|$ are as specified in the table.
\end{proof}

\begin{figure}[H]
\begin{center}
\includegraphics[width=\textwidth]{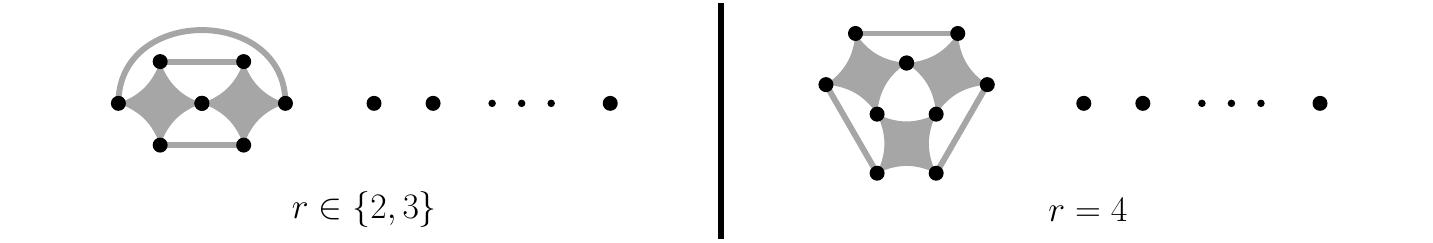}
\caption{The linear $\{2,4\}$-hypergraph $H'$ on $m$ vertices in the proof of Lemma~\ref{L:not2Mod3ExceptionalZValues}.}\label{F:UMinusExceptional}
\end{center}
\end{figure}

%\comment{For the exceptional cases we act as follows. For $x=1$ take an STS and delete a triple, for $x=2$ the non-triples graph is the union of two copies of $K_4$ and three copies of $K_2$ and two of the $K_2$s are used, for $x=3$ the graphs is the same but none of the $K_2$s are used,  for $x=4$ the graph is the union of three copies of $K_4$ and three copies of $K_2$ where none of the $K_2$s are used.}

\subsection*{Achieving $\lfloor U^- \rfloor$ or $\lfloor U^- \rfloor-1$ when $m \equiv 2 \mod{3}$}

Recall again that $U^-=\tfrac{1}{6}m(m-1)+2n$. Finally we treat the cases of Theorem~\ref{T:belowSTS} where $m \equiv 2 \mod{3}$. The cases where $n$ is far from the triple system threshold follow from a special case of \cite[Theorem~1.3]{CheHorMam}.

\begin{lemma}[\cite{CheHorMam}]\label{T:CHMexact}
We have $Z_{2,2}(m,n) = \lfloor U^- \rfloor$ for each sufficiently large integer $m$ and for all integers $n$ such that
\[\tfrac{1}{6}\tbinom{m}{2}+\tfrac{m}{3}+40 \leq n \leq
\left\{
  \begin{array}{ll}
    \tfrac{1}{3}\tbinom{m}{2}-28 & \hbox{if $m$ is odd} \\
    \tfrac{1}{3}\tbinom{m}{2}-\frac{m}{4}-28 & \hbox{if $m$ is even.}
  \end{array}
\right.
\]
\end{lemma}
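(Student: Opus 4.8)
The plan is to deduce this lemma directly from \cite[Theorem~1.3]{CheHorMam}, which determines $Z_{2,t}(m,n)$ exactly for all sufficiently large $m$ and for $n$ ranging over an interval whose right endpoint sits just below the threshold $\frac{1}{3}\binom{m}{2}$ (with a larger gap of order $\frac{m}{4}$ when $m$ is even). Specialising that theorem to $t=2$ gives a formula for $Z_{2,2}(m,n)$, and the first task is to unwind this formula and confirm that, throughout the range of $n$ named here, it evaluates to $\lfloor U^- \rfloor=\lfloor \frac{1}{6}m(m-1)+2n\rfloor$. Since $U^-$ is itself the $t=2$ instance of the Roman upper bound appearing in Lemma~\ref{L:basicUpperBounds}, I expect the extremal value in \cite[Theorem~1.3]{CheHorMam} to coincide with $\lfloor U^- \rfloor$ precisely in this regime, so this step should amount to matching two expressions rather than any fresh computation.

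The key steps, in order, are: first, quote \cite[Theorem~1.3]{CheHorMam} and set $t=2$; second, verify that the hypotheses of that theorem are met, namely that $m$ is large enough and that $n$ lies in the admissible interval, so that the lower bound $\frac{1}{6}\binom{m}{2}+\frac{m}{3}+40$ and the upper bounds $\frac{1}{3}\binom{m}{2}-28$ for $m$ odd and $\frac{1}{3}\binom{m}{2}-\frac{m}{4}-28$ for $m$ even are exactly the endpoints inherited from the general statement and the constants $40$ and $28$ require no adjustment; and third, read off that the claimed value is $\lfloor U^- \rfloor$. No explicit construction of extremal hypergraphs is needed in this lemma itself, because the matching lower-bound constructions, including the treatment of the leftover pairs that arise when $m\equiv 2\mod 3$ and $\frac{1}{3}\binom{m}{2}$ fails to be an integer, are already carried out inside the proof of \cite[Theorem~1.3]{CheHorMam}.

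I expect the only real obstacle to be bookkeeping: one must check that the parameter conventions of the earlier paper, in particular its normalisation of the extremal total degree, its statement of the admissible range of $n$ as a function of $t$, and its handling of the floor function, line up with the present notation so that the $t=2$ slice reduces cleanly to $Z_{2,2}(m,n)=\lfloor U^- \rfloor$. I would therefore spend the bulk of the verification confirming that the interval of $n$ specified here is contained in, and in fact agrees with, the interval supplied by the general theorem, and that the parity and congruence case split there produces no exceptional values equal to $\lfloor U^- \rfloor-1$ inside this range. This last point is consistent with the rest of the section, since the exceptional values for $m\equiv 5\mod 6$ occur only at $n=\lfloor\frac{1}{3}\binom{m}{2}\rfloor-\{0,1,3\}$, which lie above the present range and are dealt with separately in Lemma~\ref{L:5mod6ZValue}. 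Once these alignments are checked, the lemma is immediate.
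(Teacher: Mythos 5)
Your proposal matches the paper exactly: the paper gives no independent proof of this lemma, stating it as a direct special case of \cite[Theorem~1.3]{CheHorMam} (hence the citation in the lemma's bracket), which is precisely the specialise-to-$t=2$-and-match-parameters argument you outline. The bookkeeping you flag (matching the extremal value to $\lfloor U^- \rfloor$, the interval endpoints, and the constants $40$ and $28$) is exactly what is implicitly absorbed into that citation.
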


%Note that in fact Colbourn and Stinson show that we can take $M=HHHH$ when $n \equiv 2 \mod{3}$. HHHH

So it remains to prove Theorem~\ref{T:belowSTS} for values of $n$ close to the triple system threshold. We do this in Lemmas~\ref{L:5mod6ZValue} and \ref{L:2mod6ZValue} for the cases where $m \equiv 5 \mod{6}$ and $m \equiv 2 \mod{6}$ respectively. To assist in the proof of Lemma~\ref{L:5mod6ZValue}, we first establish the existence of a particular class of linear $\{2,4\}$-hypergraphs.

\begin{lemma}\label{L:make4Hypergraph}
Let $k \geq 5$ be an integer. For each integer $m \geq 2k+1$ there is a linear $\{2,4\}$-hypergraph on $m$ vertices with $k$ edges of size $4$ and one edge of size $2$ in which each vertex is incident to either zero or two edges.
\end{lemma}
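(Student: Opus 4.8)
The plan is to exploit a point--block duality. Observe first that the required hypergraph must have exactly $2k+1$ non-isolated vertices: since every non-isolated vertex has degree $2$, the total degree $4k+2$ equals twice the number of such vertices. I would therefore first build a hypergraph on exactly $2k+1$ vertices with the stated incidence structure and then adjoin $m-(2k+1) \geq 0$ isolated vertices, which is possible precisely because $m \geq 2k+1$.

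To build the core hypergraph, I would encode it by an auxiliary graph $\Gamma$ whose vertex set is the set of $k+1$ edges of the hypergraph (the $k$ edges of size $4$ together with the single edge of size $2$). For each vertex $x$ of the hypergraph, which by hypothesis lies in exactly two edges $B$ and $B'$, place an edge of $\Gamma$ joining $B$ and $B'$. Under this correspondence the vertices of the hypergraph are exactly the edges of $\Gamma$, so $\Gamma$ has $2k+1$ edges; the size of each hypergraph edge $B$ equals $\deg_\Gamma(B)$, so $\Gamma$ must have $k$ vertices of degree $4$ and one of degree $2$; and two hypergraph vertices lie in two common edges precisely when they correspond to parallel edges of $\Gamma$. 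Hence the hypergraph is linear if and only if $\Gamma$ is a simple graph, and the whole construction reduces to producing a \emph{simple} graph on $k+1$ vertices with degree sequence $(4^{k},2)$.

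It then remains to realise this degree sequence. The degree sum $4k+2$ is even and, since $k \geq 5$, the sequence $(4^{k},2)$ satisfies the Erd\H{o}s--Gallai conditions (equivalently, it reduces to a $4$-cycle under the Havel--Hakimi algorithm), so a simple graph $\Gamma$ with these degrees exists; alternatively one can give it explicitly, for instance by taking the circulant $C_{k+1}(1,2)$, which is $4$-regular, and locally modifying it around one vertex to lower that vertex's degree from $4$ to $2$ while restoring its two former neighbours to degree $4$. Reversing the duality on any such $\Gamma$ yields the desired hypergraph on $2k+1$ vertices, and padding with isolated vertices up to $m$ completes the argument.

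I expect the only genuine work to be in the last step: confirming that $(4^{k},2)$ is graphical as a simple graph for every $k \geq 5$, in particular checking the boundary case $k=5$ and handling the parity of $k+1$ if one prefers the explicit circulant construction over the Erd\H{o}s--Gallai route. Everything else is bookkeeping that is built directly into the duality, so the main obstacle is essentially this one realisability check.
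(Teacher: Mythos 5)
Your proof is correct, but it takes a genuinely different route from the paper's. The paper constructs the hypergraph explicitly: on the vertex set $\mathbb{Z}_{2k}\cup\{\tfrac{3}{2}\}$ it takes the cyclic chain of $4$-element edges $\{0,1,3,4\},\{2,3,5,6\},\ldots,\{2k-4,2k-3,2k-1,0\}$ together with $\{2k-2,2k-1,1,\tfrac{3}{2}\}$ and $\{2,\tfrac{3}{2}\}$, verifies linearity and the degree condition directly, and pads with isolated vertices. You instead pass to the dual multigraph $\Gamma$ whose vertices are the hypergraph edges and whose edges are the degree-$2$ hypergraph vertices; this translation is sound (loops cannot occur because hypergraph edges are sets, and linearity fails exactly when $\Gamma$ has parallel edges), so the lemma reduces to realizing the degree sequence $(4^{k},2)$ by a simple graph on $k+1$ vertices. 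That realizability check does go through: the Erd\H{o}s--Gallai inequalities hold for this sequence precisely when $k\geq 5$, the binding cases being $r=3$ (which requires $12\leq 3k-1$) and $r=4$ (which requires $16\leq 4k-2$). Your approach buys conceptual clarity and generality: it explains the hypothesis $k\geq 5$ (for $k=4$ the sequence $(4^{4},2)$ on five vertices is not graphical, so no such hypergraph exists at all), and it adapts immediately to other prescribed edge-size profiles. The paper's approach buys complete explicitness, with no appeal to a realizability theorem. One caution on your explicit alternative: in $C_{k+1}(1,2)$ the symmetric swap (delete $\{v,v-2\},\{v,v+2\}$, add $\{v-2,v+2\}$) fails for $k=5$ since $v-2$ and $v+2$ are already adjacent in $C_{6}(1,2)$; an asymmetric swap such as deleting $\{v,v+1\},\{v,v-2\}$ and adding $\{v+1,v-2\}$ fixes this, and in any case the Erd\H{o}s--Gallai route you also offer covers all $k\geq 5$, so your hedge about the boundary case is warranted but harmless.
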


\begin{proof}
Consider the hypergraph on vertex set $\mathbb{Z}_{2k} \cup \{\frac{3}{2}\}$ whose edges are
%$E_0,\ldots,E_{k-1},E_\infty$ where
%$E_i=\{2i,2i+1,2i+3,2i+4\}$ each $i \in \{0,\ldots,k-3\}$, $E_{k-2}=\{2k-4,2k-3,2k-1,0\}$, $E_{k-1}=\{2k-2,2k-1,1,\infty\}$ and $E_\infty=\{2,\infty\}$.
\[\{0,1,3,4\},\{2,3,5,6\},\{4,5,7,8\},\ldots,\{2k-6,2k-5,2k-3,2k-2\},\{2k-4,2k-3,2k-1,0\},\]
along with $\{2k-2,2k-1,1,\frac{3}{2}\}$ and $\{2,\frac{3}{2}\}$. Figure~\ref{F:make4Hypergraph} gives a depiction of this hypergraph in the case $k=5$. Since $k \geq 5$ it is routine to check that this hypergraph is linear and that each of its vertices is incident with exactly 2 edges. The required linear $\{2,4\}$-hypergraph on $m$ vertices can be formed by adding isolated vertices if required.
\end{proof}

\begin{figure}[H]
\begin{center}
\includegraphics[width=\textwidth]{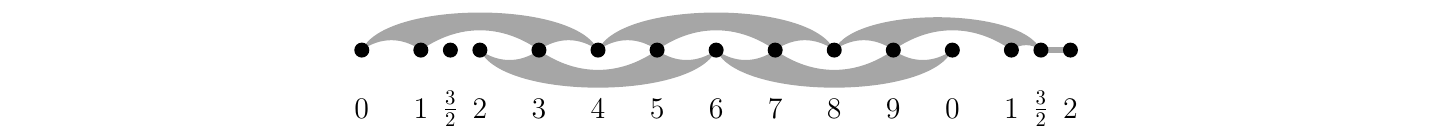}
\caption{The linear $\{2,4\}$-hypergraph on vertex set $\mathbb{Z}_{2k} \cup \{\frac{3}{2}\}$ in the proof of Lemma~\ref{L:make4Hypergraph} in the case $k=5$. Note that the vertices $0$, $1$, $\frac{3}{2}$ and $2$ are each depicted twice to allow for a clean presentation.}\label{F:make4Hypergraph}
\end{center}
\end{figure}

\begin{lemma}\label{L:5mod6ZValue}
For each sufficiently large integer $m$ such that $m\equiv 5\pmod{6}$ and for $n=\lfloor\frac{1}{3}\binom{m}{2}\rfloor-r$ where $r \in \{0,\ldots,27\}$, we have
\[Z_{2,2}(m,n) =
\left\{
  \begin{array}{ll}
    \left\lfloor U^-\right\rfloor-1 & \hbox{if $r\in \{0,1,3 \}$} \\[1.5mm]
    \left\lfloor U^- \right\rfloor & \hbox{if $r \in \{2\} \cup \{4,\ldots,27\}$.}
  \end{array}
\right.
\]
%\[
%Z_{2,2}(m,n) =
%\begin{cases}
%\lfloor A_3(m,n) \rfloor-1 = 2n+\frac{m(m-1)-2}{6}-1 &\text{ if } n = \frac{m(m-1)-2}{6}-3\\
%\lfloor A_3(m,n) \rfloor = 2n+\frac{m(m-1)-2}{6}  &\text{ otherwise}
%\end{cases}
%\]
\end{lemma}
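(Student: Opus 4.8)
The upper bounds are already in hand: Lemma~\ref{L:strongUpperBounds} gives $Z_{2,2}(m,n) \leq \lfloor U^- \rfloor - 1$ when $r \in \{0,1,3\}$ (recall $S=\{0,1,3\}$ for $m \equiv 5 \bmod 6$), while Lemma~\ref{L:basicUpperBounds} gives $Z_{2,2}(m,n) \leq \lfloor U^- \rfloor$ otherwise. So in each case it suffices to exhibit a linear hypergraph on $m$ vertices with $n$ edges attaining the claimed total degree. Since $m \equiv 5 \bmod 6$ we have $\binom{m}{2} \equiv 1 \bmod 3$, so $\tfrac13\binom{m}{2} = \lfloor\tfrac13\binom{m}{2}\rfloor + \tfrac13$, and a short computation gives $\lfloor U^- \rfloor = \binom{m}{2} - 1 - 2r$.

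My plan is to build each target hypergraph as $H = H' \cup H''$, where $H'$ is a small linear hypergraph using only edges of sizes in $\{2,4,5\}$ whose underlying graph is an even graph, and $H''$ is a linear $3$-hypergraph whose underlying graph is the defect $D'$ of $H'$. Because $m$ is odd and the underlying graph of $H'$ is even, $D'$ is an even graph; since each vertex of $H'$ lies in only a few edges, $D'$ has minimum degree at least $m-7$; and in each case one checks $|E(D')| \equiv 0 \bmod 3$. Hence Theorem~\ref{L:3HypergraphExistence} supplies $H''$ once $m$ is large, with $|E(D')|/3$ edges of size $3$. The edge count and total degree of $H$ then follow by routine arithmetic.

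It remains to choose $H'$. Writing $n_i$ for the number of edges of $H$ of size $i$, the two linear constraints (the edge count $\sum_i n_i = n$ and the pair count $\sum_i \binom{i}{2}n_i = \binom{m}{2}$) together with the target total degree pin down the non-triple part of $H$: among linear $\{2,3,4\}$-spaces the exact-bound cases force $n_2 = 1$, $n_4 = r+1$, and the $\lfloor U^-\rfloor - 1$ cases force $n_2 = 4$, $n_4 = r+3$. Accordingly I take the following.
\begin{itemize}[itemsep=0mm,parsep=0mm,topsep=1mm]
\item For $r \in \{4,\ldots,27\}$, let $H'$ be the linear $\{2,4\}$-hypergraph of Lemma~\ref{L:make4Hypergraph} with $k = r+1 \geq 5$ edges of size $4$ and one edge of size $2$, in which every vertex meets $0$ or $2$ edges (valid since $m \geq 2k+1$ for large $m$).
\item For $r = 2$, the forced values $n_2 = 1$, $n_4 = 3$ cannot be realised with an even underlying graph, so instead let $H'$ consist of a single edge of size $5$ (so $n_5 = 1$, $n_4 = 0$); its underlying graph is a $K_5$, which is even.
\item For $r \in \{0,1,3\}$, let $H'$ be a linear $\{2,4\}$-hypergraph with four edges of size $2$ and $r+3$ edges of size $4$ in which every vertex meets $0$ or $2$ edges.
\end{itemize}
In each case every vertex of $H'$ meets an even number of edges, so, as all sizes lie in $\{2,4,5\}$, its underlying graph is even; and since $n_2 \equiv 1 \bmod 3$ (respectively $n_5 = 1$) one verifies that $|E(D')| = \binom{m}{2} - n_2 - 6n_4 - 10n_5 \equiv 0 \bmod 3$. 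For the last family, the existence of $H'$ reduces to finding a simple graph on $n_2 + n_4 = r+7$ vertices whose degree sequence consists of four $2$'s and $(r+3)$ $4$'s: interpreting the vertices of such a graph as the edges of $H'$ and its edges as the vertices shared by the corresponding pairs of hyperedges yields a linear $\{2,4\}$-hypergraph with the required incidences, and these small configurations are exhibited explicitly.

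The main obstacle is the handful of small cases $r \in \{0,1,3\}$ and $r = 2$. The generic construction of Lemma~\ref{L:make4Hypergraph} needs at least five edges of size $4$ and produces only one edge of size $2$, so these cases demand the bespoke $\{2,4\}$-configurations (with four edges of size $2$) and the size-$5$ substitute above. This is also precisely where the parity bookkeeping becomes delicate: it is the evenness of $D_3$ that both permits the $K_5$ construction to attain $\lfloor U^- \rfloor$ when $r = 2$ and, via Lemma~\ref{L:strongUpperBounds}, forbids it when $r \in \{0,1,3\}$.
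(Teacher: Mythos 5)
Your proposal is correct and shares the paper's overall framework: the same two upper-bound lemmas (Lemma~\ref{L:strongUpperBounds} for $r\in\{0,1,3\}$, Lemma~\ref{L:basicUpperBounds} otherwise), and the same construction scheme of a small $\{2,4\}$-configuration $H'$ plus a triangle decomposition of its defect via Theorem~\ref{L:3HypergraphExistence}; for $r\in\{4,\ldots,27\}$ your argument is identical to the paper's. Where you genuinely diverge is in the exceptional cases $r\in\{0,1,2,3\}$. The paper handles $r\in\{0,1\}$ with Spencer's theorem (Theorem~\ref{l:23-hypergraph}), taking a linear $\{2,3\}$-space with four size-2 edges and deleting some of them; it handles $r=2$ by a direct application of Theorem~\ref{3-GDD} with $g=5$, $h=1$, $w=m-5$; and it handles $r=3$ by modifying the $r=4$ hypergraph (delete one triple, add two size-2 edges inside it). You instead use zero-defect constructions throughout: for $r\in\{0,1,3\}$ a $\{2,4\}$-configuration with $n_2=4$, $n_4=r+3$ and every vertex in $0$ or $2$ edges, and for $r=2$ a single size-5 edge whose complement is triangle-decomposed. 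Your arithmetic checks out (with $n_2=4$, $n_4=r+3$ the union has $n$ edges and total degree $\binom{m}{2}-2r-2=\lfloor U^-\rfloor-1$), and your dualisation to degree sequences is sound; the required graphs with degree sequence $(4^{r+3},2^4)$ do exist (e.g.\ the octahedron plus a disjoint $C_4$ for $r=3$), but you only assert they are ``exhibited explicitly'' without exhibiting them, so that step still needs to be written out. Both routes work; the paper's buys validity for all $m\equiv 5\pmod{6}$ in the cases $r\in\{0,1,2\}$, since Spencer's theorem and the GDD theorem are not asymptotic, whereas yours buys uniformity---one mechanism for every $r$---at the cost of invoking the asymptotic decomposition theorem everywhere. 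One small slip to fix: your blanket claim that ``every vertex of $H'$ meets an even number of edges'' is false for your $r=2$ choice, where each vertex of the size-5 edge meets exactly one edge; evenness of the underlying graph holds there for the different reason you noted earlier, namely that a size-5 edge contributes even degree $4$ to each of its vertices.
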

\begin{proof}
Note that $\lfloor\frac{1}{3}\binom{m}{2}\rfloor=\frac{1}{3}\binom{m}{2}-\frac{1}{3}$ since $m\equiv 5\pmod{6}$. By Lemmas~\ref{L:basicUpperBounds} and \ref{L:strongUpperBounds}, it suffices to construct a hypergraph with $m$ vertices, $n$ edges and total degree $\lfloor U^-\rfloor-1$ when $r \in \{0,1,3\}$ and $\lfloor U^-\rfloor$ otherwise. Note that
\[\left\lfloor U^- \right\rfloor=2n+\tfrac{1}{3}\tbinom{m}{2}-\tfrac{1}{3} = \tbinom{m}{2}-2r-1.\]
We will show there exists a linear $\{2,3,4,5\}$-hypergraph $H$ on $m$ vertices with $|E(H_2)|$, $|E(H_3)|$, $|E(H_4)|$ and $|E(H_5)|$ as specified in the following table. In each case simple calculations show that such an $H$ will have $n=\frac{1}{3}\binom{m}{2}-\frac{1}{3}-r$ edges and the appropriate total degree, as required to complete the proof.

\begin{center}
\begin{tabular}{c||c|c|c|c}
  \rule{4mm}{0mm}$r$\rule{4mm}{0mm} & \rule{4mm}{0mm}$|E(H_2)|$\rule{4mm}{0mm} & \rule{4mm}{0mm}$|E(H_3)|$\rule{4mm}{0mm} & \rule{4mm}{0mm}$|E(H_4)|$\rule{4mm}{0mm} & \rule{4mm}{0mm}$|E(H_5)|$\rule{4mm}{0mm} \\
  \hline
  0 & $1$ & $\frac{1}{3}\binom{m}{2}-\frac{4}{3}$ & $0$ & $0$ \\
  1 & $0$ & $\frac{1}{3}\binom{m}{2}-\frac{4}{3}$ & $0$ & $0$ \\
  2 & $0$ & $\frac{1}{3}\binom{m}{2}-\frac{10}{3}$ & $0$ & $1$ \\
  3 & $3$ & $\frac{1}{3}\binom{m}{2}-\frac{34}{3}$ & $5$ & $0$ \\
  $\geq 4$ & $1$ & $\frac{1}{3}\binom{m}{2}-\frac{7}{3}-2r$ & $r+1$ & $0$
\end{tabular}
\end{center}

\smallskip

When $r \in \{0,1\}$, a linear hypergraph $H$ with the properties specified by the table can be obtained by using Theorem~\ref{l:23-hypergraph} to obtain a linear $\{2,3\}$-space with $\frac{1}{3}\binom{m}{2}-\frac{4}{3}$ edges of size 3 and four edges of size 2, and then deleting some edges of size 2. When $r=2$, a linear hypergraph $H$ with the properties specified by the table exists by applying Theorem~\ref{3-GDD} with $g=5$, $h=1$ and $w=m-5$. When $r \geq 4$, provided that $m$ is sufficiently large, by Lemma~\ref{L:make4Hypergraph} there is a linear $\{2,4\}$-hypergraph $H'$ with $m$ vertices, $|E(H'_2)|=1$, $|E(H'_4)|=r+1$ and in which each vertex is incident with zero or two edges. It can be seen that the defect $D'$ of this hypergraph is an even graph with $|E(D')| \equiv 0 \mod {3}$ and each vertex having degree at least $m-7$. Thus, by Theorem~\ref{L:3HypergraphExistence}, there is a linear $3$-hypergraph $H''$ whose underlying graph is $D'$ provided that $m$ is sufficiently large. Let $H$ be the linear hypergraph formed by taking the union of $H'$ and $H''$. It can be checked that $|E(H_2)|$, $|E(H_3)|$, $|E(H_4)|$ and $|E(H_5)|$ are as specified in the table.
%Furthermore, a simple calculation shows that $|E(H'')|=\frac{1}{3}(\binom{m}{2}-6r-7)$. A linear hypergraph $H$ with the properties specified by the table can be formed by taking the union of $H'$ and $H''$.
When $r=3$, note that a linear hypergraph with the properties specified by the table can be formed from the required hypergraph for $r=4$ by deleting an edge of size 3 and adding two distinct edges of size 2 that are subsets of the deleted edge.
\end{proof}

\begin{lemma}\label{L:2mod6ZValue}
For each sufficiently large integer $m$ such that $m\equiv 2\pmod{6}$ and for $n = \lfloor\frac{1}{3}\binom{m}{2}-\frac{m}{4}\rfloor-r$ where $r \in \{0,\ldots ,27\}$ we have
$Z_{2,2}(m,n) =\left\lfloor U^- \right\rfloor$.
\end{lemma}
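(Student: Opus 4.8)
The plan is to proceed exactly as in Lemma~\ref{L:5mod6ZValue}. By Lemma~\ref{L:basicUpperBounds} we have $Z_{2,2}(m,n)\leq\lfloor U^-\rfloor$, so it suffices to construct a linear hypergraph with $m$ vertices, $n$ edges and total degree $\lfloor U^-\rfloor$. Since $m\equiv 2\pmod 6$ we have $m\equiv 2\pmod 3$, so neither Theorem~\ref{T:ColRosSti} nor Lemma~\ref{L:ColRosStiConsequence} is available and we cannot simply build a linear $\{3,4\}$-space. Instead I would take a linear $\{2,4\}$-hypergraph $H'$ on all $m$ vertices whose defect $D'$ is an even graph with $|E(D')|\equiv 0\pmod 3$ and minimum degree at least $m-c$ for some absolute constant $c$, apply Theorem~\ref{L:3HypergraphExistence} to obtain a linear $3$-hypergraph $H''$ with underlying graph $D'$, and set $H=H'\cup H''$. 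Then $H$ is a linear space, so its parameters are completely determined by $|E(H'_2)|$ and $|E(H'_4)|$.

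Writing $\lfloor U^-\rfloor=\lfloor\tfrac13\binom m2\rfloor+2n$ (valid since $\binom m2\equiv 1\pmod 3$) and solving the resulting degree and edge-count equations, exactly as in the derivation preceding \eqref{E:weightedSumDef}, pins down the profile: one should take $|E(H'_2)|=1$ and $|E(H'_4)|=\tfrac14 m+r+O(1)$, with the remaining $n-|E(H'_2)|-|E(H'_4)|$ edges of $H$ of size $3$. The exact value of $|E(H'_4)|$, and a small shift in the construction, will depend on $m\bmod 12$, since $m\equiv 2\pmod{12}$ and $m\equiv 8\pmod{12}$ differ with respect to $\tfrac m4$; I would record the three edge counts in a table for each residue, as in Lemmas~\ref{L:not2Mod3ExceptionalZValues} and \ref{L:5mod6ZValue}, and check in each case that such an $H$ has exactly $n$ edges and total degree $\lfloor U^-\rfloor$. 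The choice $|E(H'_2)|=1$ is forced not only by the degree count but also by the congruence condition: since $|E(D')|=\binom m2-6\,|E(H'_4)|-|E(H'_2)|\equiv 1-|E(H'_2)|\pmod 3$, one needs $|E(H'_2)|\equiv 1\pmod 3$ for $|E(D')|$ to be divisible by $3$.

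The crux, and the main departure from Lemma~\ref{L:5mod6ZValue}, is the construction of $H'$. Because $n$ lies roughly $\tfrac m4$ below the triple system threshold, $|E(H'_4)|$ is of order $\tfrac m4$ rather than $O(1)$. These size-$4$ edges cannot all be pairwise vertex-disjoint (equivalently, cannot all arise as the groups of a $3$-$\GDD$ on $m$ points), since a partition of $m$ vertices into edges of size $4$ contains only about $\tfrac m4$ of them, whereas we need a surplus of $r+O(1)$; hence some edges must overlap. Moreover, since $m$ is even we have $\deg_{D'}(v)=(m-1)-\deg_{G'}(v)$ with $m-1$ odd, so $D'$ is an even graph if and only if every vertex lies in an \emph{odd} number of edges of $H'$, the opposite of the even-degree condition used in Lemma~\ref{L:make4Hypergraph}. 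I would therefore take as the bulk of $H'$ a near-perfect partition of the vertex set into edges of size $4$, placing each vertex in exactly one edge, and then add a bounded gadget supplying the $r+O(1)$ surplus edges of size $4$ and the single edge of size $2$; in the gadget each affected vertex must be made to lie in an odd number (one or three) of the size-$4$ edges. Linearity forces the three edges through any degree-three vertex to meet pairwise only there, so each such vertex needs a few private neighbours, but this keeps every $G'$-degree bounded and hence the minimum degree of $D'$ at least $m-O(1)$, which suffices for Theorem~\ref{L:3HypergraphExistence} once $m$ is large. The hard part will be to assemble this gadget so that simultaneously $H'$ is linear, every vertex lies in an odd number of edges (making $D'$ even), and $|E(D')|\equiv 0\pmod 3$; I expect this to require a short explicit construction, presented by a figure as elsewhere in the paper, together with a few subcases according to $m\bmod 12$ to absorb the one or two leftover vertices and to realise each $r\in\{0,\dots,27\}$.
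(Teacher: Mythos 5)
Your proposal is correct and follows essentially the same route as the paper's proof: the paper likewise takes one edge of size $2$, a near-partition of the remaining vertices into disjoint $4$-edges, and a bounded gadget of $b$ ``bouquets'' of three $4$-edges meeting in a common vertex (so that every vertex lies in an odd number of edges and the defect $D'$ is even with $|E(D')|\equiv 0\pmod{3}$ and minimum degree at least $m-10$), with $b=2r+4$ or $b=2r+3$ according to $m\equiv 2$ or $8\pmod{12}$, before applying Theorem~\ref{L:3HypergraphExistence} and taking the union. The explicit gadget you defer to a figure is exactly what the paper's Figure~\ref{F:UMinusExceptional2} supplies, and your counts ($|E(H'_2)|=1$, $|E(H'_4)|=\frac{m}{4}+r+O(1)$) and parity analysis match the paper's table.
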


%HHHH with/where consistency

\begin{proof}
In view of Lemma~\ref{L:basicUpperBounds}, it suffices to find a linear hypergraph with $m$ vertices, $n$ edges, and total degree $\lfloor U^- \rfloor$. We will show there exists a linear $\{2,3,4\}$-hypergraph $H$ on $m$ vertices with $|E(H_2)|$, $|E(H_3)|$ and $|E(H_4)|$ as specified in the following table. In each case simple calculations show that such an $H$ will have $n$ edges and total degree $\lfloor U^- \rfloor = \frac{1}{3}\binom{m}{2}-\frac{1}{3}+2n$, as required to complete the proof.

\begin{center}
\begin{tabular}{ll||c|c|c}
  \multicolumn{2}{c||}{case} & \rule{2mm}{0mm}$|E(H_2)|$\rule{2mm}{0mm} & \rule{2mm}{0mm}$|E(H_3)|$\rule{2mm}{0mm} & \rule{2mm}{0mm}$|E(H_4)|$\rule{2mm}{0mm} \\ %& \rule{4mm}{0mm}$|E(D)|$\rule{4mm}{0mm} \\
  \hline
  $m \equiv 2 \mod{12}$ & $n=\frac{1}{3}\binom{m}{2}-\frac{m}{4}-\frac{5}{6}-r$ & $1$ & $\frac{1}{3}\binom{m}{2}-\frac{m}{2}-\frac{10}{3}-2r$ & $\frac{m}{4}+\frac{3}{2}+r$ \\ %& $2$ \\
  $m \equiv 8 \mod{12}$ & $n=\frac{1}{3}\binom{m}{2}-\frac{m}{4}-\frac{1}{3}-r$  & $1$ & $\frac{1}{3}\binom{m}{2}-\frac{m}{2}-\frac{7}{3}-2r$ & $\frac{m}{4}+1+r$ \\ %& $0$ \\
  %$m \equiv 10 \pmod{12}$ & $n=\frac{1}{3}\binom{m}{2}-\frac{m-2}{4}$ & $1$ & $\frac{1}{3}\binom{m}{2}-\frac{m}{2}$ & $\frac{m-2}{4}$ \\ %& $2$ \\
\end{tabular}
\end{center}

To see that such a hypergraph exists, consider the linear $\{2,4\}$-hypergraph $H'$ on $m$ vertices depicted in Figure~\ref{F:UMinusExceptional2} where $b$ is chosen to be $2r+4$ if $m \equiv 2 \mod{12}$ and chosen to be $2r+3$ if $m \equiv 8 \mod{12}$. Note that $H'$ has $3b+\frac{m-10b-2}{4}=\frac{2b+m-2}{4}$ edges of size 4 and that this agrees with the number in the fourth column of the table. Since $\binom{m}{2} \equiv 1 \mod{3}$, it can be seen that the defect $D'$ of this hypergraph is an even graph with $|E(D')| \equiv 0 \mod {3}$ and each vertex having degree at least $m-10$. Thus, by Theorem~\ref{L:3HypergraphExistence}, there is a linear $3$-hypergraph $H''$ whose underlying graph is $D'$ provided that $m$ is sufficiently large.  Let $H$ be the linear hypergraph formed by taking the union of $H'$ and $H''$. It can be checked that $|E(H_2)|$, $|E(H_3)|$ and $|E(H_4)|$ are as specified in the table.
%Furthermore, simple calculations show that $|E(H'')|$ is the number given in the third column of the table. A linear hypergraph $H$ with the properties specified by the table can be formed by taking the union of $H'$ and $H''$.
\end{proof}

\begin{figure}[H]
\begin{center}
\includegraphics[width=\textwidth]{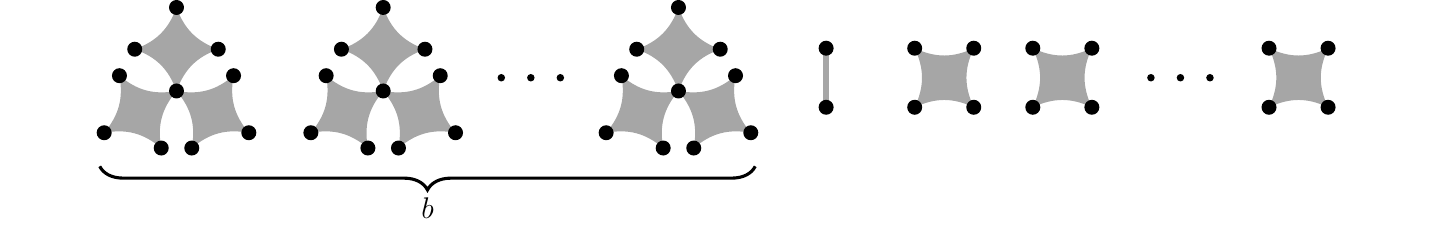}
\caption{The linear $\{2,4\}$-hypergraph $H'$ on $m$ vertices in the proof of Lemma~\ref{L:2mod6ZValue}.}\label{F:UMinusExceptional2}
\end{center}
\end{figure}

\subsection*{Proof of Theorem~\ref{T:belowSTS}}

We can now prove Theorem~\ref{T:belowSTS} by combining our previous results.

\begin{proof}[\textup{\textbf{Proof of Theorem~\ref{T:belowSTS}.}}]
It is routine to check that each pair $(m,n)$ of parameters covered by Theorem~\ref{T:belowSTS} falls into exactly one of the cases given in the first column of Table~\ref{Tab:belowSTS}. For each case, Theorem~\ref{T:belowSTS} claims that the value of $Z_{2,2}(m,n)$ is as given in the second column, and the result listed in the third column establishes that this is indeed the case.
\end{proof}

\section{Conclusion}

In this paper we concentrated on determining $Z_{2,2}(m,n)$ for values of $m$ and $n$ near the triple system threshold. The results presented here could be extended in various ways. Firstly, one could aim to prove a version of Theorem~\ref{T:belowSTS} that covered all values of $m$ rather than simply large values. Secondly, the methods employed here could also be applied to find analogous results for $Z_{2,t}(m,n)$ for $t > 3$ (see \cite{CheHorMam} for indications of how our methods can be adapted to cases where $t>2$). Finally, for any fixed value of $k \geq 4$, one could also attempt to extend our results here to consider values of $Z_{2,2}(m,n)$ for values of $m$ and $n$ near the linear $k$-space threshold of $n=\binom{n}{2}/\binom{k}{2}$. However, we believe that each of these endeavours would require substantially more intricate arguments or significantly more case analysis or both.

\subsection*{Acknowledgements}

Some of this research was undertaken while the first author visited Monash University. This visit was supported by Henan Normal University. The second author was supported by the Australian Research Council (grant DP220102212). He thanks Donald Knuth who in 2015 asked him a question about the value of $Z_{2,2}(m,\frac{1}{3}\binom{m}{2})$ for $m \equiv 0,4 \mod{6}$ that eventually led to this line of research (cf. \cite[Exercise 7.2.2.2--488]{Knu} and its solution). He also thanks Charles Colbourn for useful discussions.


\begin{thebibliography}{10}

\bibitem{AloRonSza}
N. Alon, L. R\'{o}nyai and T. Szab\'{o}, Norm-graphs: variations and applications,
\textit{J. Combin. Theory Ser. B} \textbf{76} (1999), 280--290.

\bibitem{AloMelMubVer}
N. Alon, K.E. Mellinger, D. Mubayi and J. Verstra\"{e}te, The de {B}ruijn-{E}rd\H{o}s theorem for hypergraphs,
\textit{Des. Codes Cryptogr.}
\textbf{65} (2012), 233--245.

%\bibitem{CarYus}
%Y.~Caro and R.~Yuster, List decomposition of graphs,
%\textit{Discrete Math.}
%\textbf{243} (2002), 67--77.

\bibitem{CheHorMam}
G. Chen, D. Horsley and A. Mammoliti, Exact values for unbalanced Zarankiewicz numbers,
\textit{J. Graph Theory} \textbf{106} (2024), 81--109.

\bibitem{Colbourn0}
 C.J. Colbourn, Small group divisible designs with block size three, \textit{J. Combin. Math. Combin.
Comput.} \textbf{14} (1993), 153--171.

\bibitem{Colbourn}
C.J. Colbourn, D.G. Hoffman, R. Rees, A new class of group divisible designs with block size
three, \textit{J. Combin. Theory Ser. A} \textbf{59} (1992), 73--89.

\bibitem{ColRosSti}
C.J. Colbourn, A. Rosa and D.R. Stinson,
Pairwise balanced designs with block sizes three and four,
\textit{Canad. J. Math.} \textbf{43} (1991), 673--704.

\bibitem{ColRiaWalRad}
A. Collins, A. Riasanovsky, J. Wallace and S. Radziszowski, Zarankiewicz numbers and bipartite Ramsey numbers,
\textit{J. Algorithm Comput.} \textbf{47} (2016) 63--78.

\bibitem{Con}
D. Conlon, Some remarks on the Zarankiewicz problem,
\textit{Math. Proc. Cambridge Philos. Soc.} \textbf{173} (2022) 155--161.

\bibitem{Cul}
K. \v{C}ul\'{\i}k, Teilweise {L}\"{o}sung eines verallgemeinerten {P}roblems von {K}. {Z}arankiewicz (German),
\textit{Ann. Polon. Math.}
\textbf{3} (1956), 165--168.

\bibitem{DamHegSzo}
G. Dam\'{a}sdi, T. H\'{e}ger and T. Sz\H{o}nyi, The {Z}arankiewicz problem, cages, and geometries,
\textit{Ann. Univ. Sci. Budapest. E\"{o}tv\"{o}s Sect. Math.}
\textbf{56} (2013), 3--37.

\bibitem{DelPos}
M. Delcourt and L. Postle, Progress towards Nash-Williams' conjecture on triangle decompositions, \textit{J. Combin. Theory Ser. B} \textbf{146} (2021), 382--416.

\bibitem{GloKuhLoMonOst}
    S. Glock, D. K\"{u}hn, A. Lo, R. Montgomery and D. Osthus,
    On the decomposition threshold of a given graph,
    {\it J. Combin. Theory Ser. B} {\bf 139} (2019), 47--127.

\bibitem{Guy}
R.K. Guy, A many-facetted problem of Zarankiewicz, in: The many facets of graph theory, Springer (1969), 129--148.

\bibitem{GodHenOel}
W. Goddard, M. Henning and O. Oellermann, Bipartite Ramsey numbers and Zarankiewicz numbers,
\textit{Discrete Math.}
\textbf{219} (2000), 85--95.

%\bibitem{Hyl}
%C. Hylt\'{e}n-Cavallius, On a combinatorical problem,
%\textit{Colloq. Math.}
%\textbf{6}, (1958), 59--65.
%
%\bibitem{KanTok}
%M. Kano and N. Tokushige, Binding numbers and $f$-factors of graphs,
%\textit{J. Combin. Theory Ser. B}
%\textbf{54} (1992), 213--221.

%\bibitem{Kee}
%P. Keevash, The existence of designs,
%arXiv:1401.3665 (2019).
%
%\bibitem{Kirkman}
%P. T. Kirkman, On a problem in combinations, Cambridge and Dublin Math. J., 2 (1847), 191-204.

\bibitem{Knu}
D.E. Knuth, The art of computer programming, Volume 4, Fascicle 6: Satisfiability, Addison-Wesley Professional (2015).

\bibitem{KovSosTur}
T. K\"{o}vari, V. S\'{o}s and P. Tur\'{a}n, On a problem of {K}. {Z}arankiewicz,
\textit{Colloq. Math.}
\textbf{3} (1954), 50--57.

%\bibitem{Mor}
%M. M\"{o}rs, A new result on the problem of Zarankiewicz,
%\textit{J. Combin. Theory Ser. A} \textbf{31} (1981), 126--130.
%
%\bibitem{Rei}
%I. Reiman, \"{U}ber ein {P}roblem von {K}. {Z}arankiewicz (German),
%\textit{Acta Math. Acad. Sci. Hungar.}
%\textbf{9} (1958), 269--273.
%
%\bibitem{Rei2}
%I. Reiman, Su una proprietà dei 2-disegni (Italian),
%\textit{Rend. Mat.}
%\textbf{1} (1968), 75--81.

\bibitem{Rom}
S.~Roman, A problem of Zarankiewicz,
\textit{J. Combin. Theory Ser. A}
\textbf{18} (1975), 187--198.

\bibitem{Spencer}
J. Spencer, Maximal consistent families of triples, \textit{J. Combin. Theory Ser. A} \textbf{5} (1968) 1--8.
%\bibitem{Wes}
%D.B.~West, Introduction to Graph Theory (2nd Edition), Prentice-Hall (2001).

\bibitem{Tan}
J. Tan, An attack on Zarankiewicz's problem through SAT solving, arXiv:2203.02283 (2022).

\bibitem{Zar}
K. Zarankiewicz, Problem p101,
\textit{Colloq. Math.} \textbf{2} (1951), 301.

\bibitem{Zhu}
L. Zhu, Some recent developments on BIBDs and related designs, \textit{Discrete Math.} \textbf{123}
(1993) 189--214.

\end{thebibliography}
\end{document}